\newcommand{\EPS}{\ensuremath{\varepsilon}}
\newcommand{\RE}[1]{\ensuremath{\text{Re}\,#1}}
\newcommand{\BAR}[1]{\ensuremath{\overline{#1}}}
\newcommand{\WSTAR}{\ensuremath{\text{weak}^{\ast}}}
\newcommand{\DD}[1]{\ensuremath{\partial #1}}
\newcommand{\QEDEX}{\ensuremath{\lozenge}}
\newcommand{\EMPTY}{\ensuremath{\varnothing}}
\newcommand{\DET}[1]{\ensuremath{\mbox{det}\,{#1}}}
\newcommand{\DIM}[1]{\ensuremath{\mbox{dim}\,#1}}
\newcommand{\ES}{\ensuremath{\sigma_{\textup{ess}}}}
\newcommand{\PS}{\ensuremath{\sigma_{\textup{p}}}}
\newcommand{\KER}{\ensuremath{\mathcal{N}}}
\newcommand{\RG}{\ensuremath{\mathcal{R}}}
\newcommand{\NN}{\ensuremath{\mathbb{N}}}
\newcommand{\RR}{\ensuremath{\mathbb{R}}}
\newcommand{\CC}{\ensuremath{\mathbb{C}}}
\newcommand{\CONT}{\ensuremath{C}}
\newcommand{\CONTD}[1]{\ensuremath{C^{#1}}}
\newcommand{\LP}[1]{\ensuremath{L^{#1}}}
\newcommand{\BND}{\ensuremath{\mathcal{L}}}
\newcommand{\NBV}{\ensuremath{\mbox{NBV}}}
\newcommand{\XC}{\ensuremath{X_0}}
\newcommand{\LCM}{\ensuremath{\mathcal{W}^c_{loc}}}
\newcommand{\CMMAPC}{\ensuremath{\mathcal{H}}}
\newcommand{\SUN}[1]{\ensuremath{#1^{\odot}}}
\newcommand{\STAR}[1]{\ensuremath{#1^{\ast}}}
\newcommand{\SUNSTAR}[1]{\ensuremath{#1^{\odot\star}}}
\newcommand{\SUNSUN}[1]{\ensuremath{#1^{\odot\odot}}}
\newcommand{\STARSTAR}[1]{\ensuremath{#1^{\ast\ast}}}
\newcommand{\PAIR}[3][]{\ensuremath{\langle #2,#3 \rangle_{#1}}}
\newcommand{\DEF}{\ensuremath{\colonequals}}
\theoremstyle{plain}
\newtheorem{theorem}{Theorem}[]
\newtheorem{corollary}[theorem]{Corollary}
\newtheorem{lemma}[theorem]{Lemma}
\newtheorem{proposition}[theorem]{Proposition}
\theoremstyle{definition}
\newtheorem{definition}[theorem]{Definition}
\newtheorem{remark}[theorem]{Remark}
\newcommand{\Id}{\ensuremath{I}}
\newcommand{\rr}{\ensuremath{\mathbf{r}}}
\numberwithin{equation}{section}
\begin{document}

\title{On Local Bifurcations in Neural Field Models with Transmission Delays}

\author[S.A. van Gils]{Stephan A. van Gils}
\address{S.A. van Gils\\Department of Applied Mathematics\\ University of Twente\\ The Netherlands \and Mathematical Insitute\\ Utrecht University\\ The Netherlands}
\email{s.a.vangils@math.utwente.nl}

\author[S.G. Janssens]{Sebastiaan G. Janssens}
\address{S.G. Janssens\\Department of Applied Mathematics\\ University of Twente\\ The Netherlands \and Mathematical Insitute\\ Utrecht University\\ The Netherlands}
\email{sj@dydx.nl}

\author[Yu.A. Kuznetsov]{Yuri A. Kuznetsov}
\address{Yu.A. Kuznetsov\\ Mathematical Insitute\\ Utrecht University\\ The Netherlands \and Department of Applied Mathematics\\ University of Twente\\ The Netherlands}
\email{i.a.kouznetsov@uu.nl}

\author[S. Visser]{Sid Visser}
\address{Sid Visser\\Department of Applied Mathematics\\ University of Twente\\ The Netherlands}
\thanks{Sebastiaan Janssens and Sid Visser gratefully acknowledge support by The Netherlands Organization of Scientific Research (NWO) through grant 635.100.019: \emph{From Spiking Neurons to Brain Waves}.}
\email{s.visser-1@math.utwente.nl}

\subjclass[2000]{Primary 37L10; Secondary 47H20, 37L05, 37M20 \and 92C20}
\keywords{delay equation, neural field, Hopf bifurcation, numerical bifurcation analysis, normal form, dual semigroup, sun-star calculus}

\date{\today}

\dedicatory{Dedicated to Odo Diekmann, on the occasion of his $65^{\text{th}}$ birthday.}

\begin{abstract}
   Neural field models with transmission delay may be cast as abstract delay differential equations (DDE). The theory of dual semigroups (also called sun-star calculus) provides a natural framework for the analysis of a broad class of delay equations, among which DDE. In particular, it may be used advantageously for the investigation of stability and bifurcation of steady states. After introducing the neural field model in its basic functional analytic setting and discussing its spectral properties, we elaborate extensively an example and derive a characteristic equation. Under certain conditions the associated equilibrium may destabilise in a Hopf bifurcation. Furthermore, two Hopf curves may intersect in a double Hopf point in a two-dimensional parameter space. We provide general formulas for the corresponding critical normal form coefficients, evaluate these numerically and interpret the results.
\end{abstract}

\maketitle

\section{Introduction}\label{sec:introduction}
Spatial coarse graining of neural networks leads to so-called neural field models in which the average firing rates of underlying populations of neurons, as opposed to individual neuronal spikes, are considered. Such models have not changed substantially since the seminal work of Wilson and Cowan \cite{Wilson1972,Wilson1973}, Amari \cite{Amari1977} and Nunez \cite{Nunez1974}. Due to intrinsic delays of axons, synapses, and dendrites in the natural system, the role of delays in spatiotemporal dynamics of neural activity has received considerable attention \cite{Liley2002,Hutt2003,Hutt2005,Roxin2005,Venkov2007,Hutt2007,Hutt2008,Coombes2007,Coombes2009,Coombes2010}. Faugeras and coworkers investigated stability properties of stationary solutions using methods from functional analysis \cite{veltz2009local,Faye2010,Veltz2011}. A first step towards Hopf bifurcation is made in \cite{Veltz2011b}, where Hopf bifurcation curves are computed. In \cite{Ermentrout1980} the principle of linearised stability and the Hopf bifurcation to periodic orbits were studied in the absence of delays.
\par
To set the stage, we have in mind $p \ge 1$ populations consisting of neurons that occupy fixed positions in a non-empty, bounded, connected, open region $\Omega \subset \RR^n$. For each $i = 1,\ldots,p$ let $V_i(t,\rr)$ be the membrane potential at time $t$, averaged over those neurons in the $i$th population positioned at $\rr \in \Omega$.  These potentials are assumed to evolve in the absence of time dependent external stimuli according to the system of integro-differential equations
\begin{equation}
  \label{eq:neuralfield:7}
  \frac{\DD V_i}{\DD t}(t,\rr) = -\alpha_iV_i(t,\rr) + \sum_{j=1}^p{\int_{\Omega}{J_{ij}(\rr,\rr')S_j(V_j(t - \tau_{ij}(\rr,\rr'),\rr'))\,d\rr'}}
\end{equation}
for $i = 1,\ldots,p$. The intrinsic dynamics exhibit exponential decay with $\alpha_i > 0$ for $i = 1,\ldots,p$. The propagation delays $\tau_{ij}(\rr,\rr')$ measure the time it takes for a signal sent by a type-$j$ neuron located at position $\rr'$ to reach a type-$i$ neuron located at position $\rr$. For the definitions and interpretation of the real valued connectivities $J_{ij}$ and the positive, real valued 
synaptic activation functions $S_j$ appearing in \eqref{eq:neuralfield:7} we refer to \S 2 of \cite{Veltz2011}. 
\par
The aim of this paper is to demonstrate how general theory from the field of delay equations can be used successfully to analyse stability and bifurcation of equilibrium solutions of \eqref{eq:neuralfield:7}. For this we consider a specific class of delay equations of the form
\begin{equation}
  \label{eq:neuralfield:8}\tag{DDE}
  \left\{
    \begin{aligned}
      \dot{x}(t) &= F(x_t) && t \ge 0\\
      x(t) &= \phi(t) && t \in [-h,0]
    \end{aligned}
  \right.
\end{equation}
where $Y$ is a Banach space, $F: \CONT([-h,0];Y) \to Y$ is a smooth $Y$-valued function on the Banach space of continuous $Y$-valued functions equipped with the supremum norm,
\begin{displaymath}
  x_t(\theta) \DEF x(t + \theta) \quad \forall\,t \ge 0,\,\theta \in [-h,0]
\end{displaymath}
is the {\it history} at time $t \ge 0$ and $\phi \in \CONT([-h,0];Y)$ is an initial condition. The parameter $h \in (0,\infty)$ is a \emph{finite} delay. As the reader may have expected, the acronym DDE stands for \emph{delay differential equation}. 
\par
Systems of this type naturally extend the case of \emph{classical} DDE with $Y = \RR^n$ for which a rather complete dynamical theory based on perturbative calculus of dual semigroups \cite{Clement1987}, \cite{Clement1988}, \cite{Clement1989}, \cite{Clement1989b}, \cite{Diekmann1991} is available in \cite{Diekmann1995}. Recently it was understood that, from an abstract viewpoint, various apparently different classes of delay equations can be cast and analysed within the same functional analytic framework of dual perturbation theory, largely independently of the particulars of a certain class. It is only in the choice of the underlying function spaces and the spectral analysis that these details matter. In \cite{Diekmann2007} purely functional equations (also called renewal equations) as well as systems of renewal equations coupled to delay differential equations are investigated for the $\RR^n$-valued case and finite delay. In \cite{Diekmann2012} the analysis is extended to the case of infinite delay. In \cite{Diekmann2008} abstract (Banach space valued) renewal equations with infinite delay are considered. The forthcoming paper \cite{VanGils2012b} treats general aspects of abstract equations of the type \eqref{eq:neuralfield:8}.
\par
The outline of this paper is as follows: in \S \ref{sec:setting} we introduce the functional analytic setting and state the equivalence between the abstract delay equation and an abstract integral equation using sun-star calculus. We also state a linearization theorem. In \S \ref{sec:spectrum} we start with some general results on the resolvent and spectra, primarily based on \cite{Engel2000}. For a specific class of connectivity functions, i.e. finite sums of exponentials, and in one spatial dimension, we explicitly calculate the spectrum and the resolvent. It turns out that the point spectrum is determined by a determinant condition. In \S \ref{sec:cm} we give the normal form coefficients for the critical center manifold in case of Hopf and double Hopf bifurcation. This is applied in \S \ref{sec:numerics} to a scalar neural field equation with a bi-exponential connectivity function modelling an inverted Wizard hat. The system is discretised as in \cite{Faye2010} and the spectrum of the discretised system is compared with the true spectrum, showing convergence. We identify in the true spectrum a Hopf point and a double Hopf point. 
For both cases the normal form coefficients are computed, which allows us to identify the sub-type of the bifurcation at hand. The theoretical results are confirmed by numerical experiments. We end this paper in \S \ref{sec:neuralfield:discussion} with conclusions and an outlook on future work.
\par
Upon finishing this paper we encountered the online preprint \cite{Veltz2012}, addressing similar questions. We feel that there are enough substantial differences between the two papers to render both of them interesting. Moreover, we have reasons to believe that the choice $Y=L_2(\Omega)$ for the spatial state space made in \cite{Veltz2012} leads to non-trivial technical complications, see \S \ref{sec:setting:y} below. In this paper, we employ sun-star calculus, from which a number of general results is immediately available. The center manifold, for instance, was obtained in \cite{Diekmann1995} for the abstract integral equation, covering what we need here.

%%% Local Variables: 
%%% mode: latex
%%% TeX-master: "neuralfield"
%%% End: 

\section{Functional analytic setting}\label{sec:setting}
\subsection{Basic definitions and assumptions}\label{sec:setting:basics}
\newcommand{\BOmega}{\ensuremath{\BAR{\Omega}}}
It is rather straightforward to associate with \eqref{eq:neuralfield:7} a problem of the type \eqref{eq:neuralfield:8}, but see \S \ref{sec:setting:y}. To keep the setting as simple as possible, we focus on the single population case $p = 1$ when
(\ref{eq:neuralfield:7}) takes the form
\begin{equation}
  \label{eq:neuralfield:7-1}
  \frac{\DD V}{\DD t}(t,\rr) = -\alpha V(t,\rr) + \int_{\Omega} J(\rr,\rr')S(V(t - \tau(\rr,\rr'),\rr'))\,d\rr'
\end{equation}
For mathematical convenience we extend the spatial domain $\Omega$ by its boundary $\DD \Omega$ and work on $\BOmega \equiv \Omega \cup \DD \Omega$ with Lebesgue measure $|\BOmega| < \infty$. We formulate a number of basic hypotheses on the modelling functions appearing in \eqref{eq:neuralfield:7-1}. These will be tacitly assumed to hold throughout the remainder of this paper. More specific functional forms will be chosen in subsequent sections.
\newcommand{\Htau}{($\text{H}_{\tau}$)}
\newcommand{\HJ}{($\text{H}_J$)}
\newcommand{\HS}{($\text{H}_S$)}
\begin{enumerate}[itemsep=1ex,leftmargin=3em]
\item[\HJ] The {\bf connectivity kernel} $J \in \CONT(\BOmega \times \BOmega)$.
\item[\HS] The {\bf synaptic activation function} $S \in \CONTD{\infty}(\RR)$ and its $k$th derivative is bounded for every $k \in \NN_0$.
\item[\Htau] The {\bf delay function} $\tau \in \CONT(\BOmega \times \BOmega)$ is non-negative and not identically zero.
\end{enumerate}
From {\Htau} we see that $\tau$ is bounded on the compact set $\BOmega$. Hence we may set
\begin{displaymath}
  0 < h \DEF \sup\{\tau(\rr,\rr')\,:\, \rr,\rr' \in \BOmega\} < \infty
\end{displaymath}
Let $Y \DEF \CONT(\BOmega)$ be the Banach space of continuous real-valued functions on $\BOmega$ with norm
\begin{displaymath}
  \|y\| \DEF \sup_{\rr \in \Omega}{|y(\rr)|},
\end{displaymath}
We also set $X \DEF \CONT([-h,0];Y)$. When $\phi \in X$, $t \in [-h,0]$ and $\rr \in \Omega$ we will sometimes abuse notation and write $\phi(t,\rr)$ instead of $\phi(t)(\rr)$. On $X$ we have the norm
\begin{displaymath}
  \|\phi\| \DEF \sup_{t \in [-h,0]}{\|\phi(t,\cdot)\|}
\end{displaymath}
Define the nonlinear operator $G : X \to Y$ by
\begin{equation}
  \label{eq:neuralfield:15}
  G(\phi)(\rr) = \int_{\BOmega}{J(\rr,\rr')S(\phi(-\tau(\rr,\rr'),\rr'))\,d\rr'} \quad \forall\,\phi \in X,\,\forall\,\rr \in \BOmega
\end{equation}
The following lemma is standard, but in light of the difficulties pointed out in \S\ref{sec:setting:y} we provide a detailed proof.
\begin{lemma}
  \label{lem:setting:1}
  $G : X \to Y$ is well-defined by \eqref{eq:neuralfield:15}.
\end{lemma}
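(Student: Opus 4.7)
The plan is to show two things: (i) for every $\rr \in \BOmega$ the integrand on the right-hand side of \eqref{eq:neuralfield:15} is a bounded Lebesgue-measurable function of $\rr'$, so the integral exists as a real number, and (ii) the function $\rr \mapsto G(\phi)(\rr)$ belongs to $\CONT(\BOmega)$. The whole argument rests on one observation that in the current setting is a small but essential fact (and that is precisely what breaks down in the $L^2$-setting referenced in \S\ref{sec:setting:y}): because $\BOmega$ is compact and $Y=\CONT(\BOmega)$ carries the sup norm, the natural exponential law
\[
\CONT([-h,0];\CONT(\BOmega)) \;\cong\; \CONT([-h,0]\times\BOmega)
\]
holds isometrically, so I may identify $\phi\in X$ with a jointly continuous scalar function $\tilde\phi:[-h,0]\times\BOmega\to\RR$. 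I would state this identification explicitly as the first step and prove it by a standard uniform continuity / compactness argument: a sup-norm continuous curve into $\CONT(\BOmega)$ is jointly uniformly continuous on $[-h,0]\times\BOmega$, and conversely.

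Once this identification is in place, step (i) is immediate. Fix $\rr\in\BOmega$. The map
\[
\rr' \longmapsto \tilde\phi\bigl(-\tau(\rr,\rr'),\rr'\bigr)
\]
is continuous on $\BOmega$ as a composition of the continuous maps $\rr'\mapsto(-\tau(\rr,\rr'),\rr')$ (continuous because $\tau\in\CONT(\BOmega\times\BOmega)$ by \Htau) and $\tilde\phi$. Composing with $S\in\CONTD{\infty}(\RR)\subset\CONT(\RR)$ by \HS{} and multiplying by $J(\rr,\cdot)\in\CONT(\BOmega)$ by \HJ{} produces a continuous, hence bounded, function on the compact set $\BOmega$; since $|\BOmega|<\infty$ the integral is well-defined.

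For step (ii), fix $\rr\in\BOmega$ and let $\rr_n\to\rr$. I would use dominated convergence. The integrands are pointwise bounded in absolute value by $\|J\|_\infty\,\|S\|_\infty$, a constant dominator that is integrable over $\BOmega$ because $|\BOmega|<\infty$; here $S$ is bounded by the $k=0$ case of \HS, and $J$ is bounded because it is continuous on the compact set $\BOmega\times\BOmega$. Pointwise convergence of the integrands follows, for each fixed $\rr'\in\BOmega$, from continuity of $J$ at $(\rr,\rr')$, continuity of $\tau$ at $(\rr,\rr')$, joint continuity of $\tilde\phi$, and continuity of $S$. Hence $G(\phi)(\rr_n)\to G(\phi)(\rr)$, which shows $G(\phi)\in\CONT(\BOmega)=Y$ and completes the proof.

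The main obstacle, and the reason the authors stress that ``the following lemma is standard, but \ldots we provide a detailed proof,'' is the exponential-law identification $\CONT([-h,0];\CONT(\BOmega))\cong\CONT([-h,0]\times\BOmega)$. It looks trivial but it is the precise reason the argument works for $Y=\CONT(\BOmega)$ and fails for $Y=L^2(\Omega)$: in the $L^2$ case one cannot even make sense of the pointwise expression $\phi(-\tau(\rr,\rr'),\rr')$, so everything here would have to be replaced by a measurability/Fubini-type construction. I would therefore spend some care on this identification and then treat the remainder as a clean dominated-convergence argument.
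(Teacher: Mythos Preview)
Your proof is correct. Both you and the paper identify the same crucial first step: the exponential-law identification $\CONT([-h,0];\CONT(\BOmega))\cong\CONT([-h,0]\times\BOmega)$, which the paper states in one line (``Obviously, for any $\phi\in X$ the map $(t,\rr)\mapsto\phi(t,\rr)$ is continuous'') and you propose to spell out. The difference is in how continuity of $\rr\mapsto G(\phi)(\rr)$ is then established. The paper does a direct $\varepsilon$--$\delta$ argument: it splits $|G(\phi)(\rr)-G(\phi)(\bar\rr)|$ into a $J$-part and an $S$-part and controls each using uniform continuity on compacta, obtaining in fact uniform continuity of $G(\phi)$. You instead take sequences $\rr_n\to\rr$ and apply dominated convergence with the constant dominator $\|J\|_\infty\|S\|_\infty$. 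Both routes are standard and of comparable length; the paper's version yields uniform continuity for free (harmless here, since $\BOmega$ is compact anyway), while yours is perhaps slightly cleaner to write down and makes the role of the boundedness hypotheses in \HS\ and \HJ\ more transparent.
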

\begin{proof}
  Obviously, for any $\phi \in X$ the map 
  \begin{equation}
    \label{eq:setting:5}
    [-h,0] \times \BOmega \ni (t,\rr) \mapsto \phi(t,\rr) \in \RR
  \end{equation}
  is continuous. 
  \par
  Now, given $\phi \in X$ we consider for points $\rr, \BAR{\rr} \in \BOmega$,
  \begin{align*}
    |G(\phi)(\rr) - G(\phi)(\BAR{\rr})| \le& \Bigl|\int_{\BOmega}{[J(\rr,\rr') - J(\BAR{\rr},\rr')]S(\phi(-\tau(\rr,\rr'),\rr'))\,d\rr'} \Bigr|\\
    +& \Bigl|\int_{\BOmega}{J(\BAR{\rr},\rr')[S(\phi(-\tau(\rr,\rr'),\rr')) - S(\phi(-\tau(\BAR{\rr},\rr'),\rr'))]\,d\rr'} \Bigr|\\
    \le& C_S \int_{\BOmega}{|J(\rr,\rr') - J(\BAR{\rr},\rr')|\,d\rr'}\\
    +& C_J \int_{\BAR{\Omega}}{|S(\phi(-\tau(\rr,\rr'),\rr')) - S(\phi(-\tau(\BAR{\rr},\rr'),\rr'))|\,d\rr'}
  \end{align*}
  where $C_S > 0$ and $C_J > 0$ are constants bounding $S$ and $J$. Let $\EPS > 0$ be given. By the uniform continuity of $J$ on $\BOmega \times \BOmega$ there exists $\delta_J > 0$ such that the first integral does not exceed $|\BOmega|\EPS$ for all $\rr,\BAR{\rr} \in \BOmega$ satisfying $|\rr - \BAR{\rr}| \le \delta_J$. Regarding the second integral, the continuity of \eqref{eq:setting:5} and {\Htau} implies the continuity of
  \begin{equation}
    \label{eq:setting:4}
    \BOmega \times \BOmega \ni (\rr,\rr') \mapsto \phi(-\tau(\rr,\rr'),\rr') \in \RR    
  \end{equation}
  Let $I \subset \RR$ be a compact interval containing the range of \eqref{eq:setting:4}. Then $S$ is uniformly continuous on $I$. Hence there exists $\delta_S > 0$ such that $|S(u) - S(v)| \le \EPS$ for all $u,v \in I$ satisfying $|u - v| \le \delta_S$. Since \eqref{eq:setting:4} is uniformly continuous, there exists $\delta' > 0$ such that $|\rr - \BAR{\rr}| \le \delta'$ implies $|\phi(-\tau(\rr,\rr'),\rr') - \phi(-\tau(\BAR{\rr},\rr'),\rr')| \le \delta_S$ for all $\rr' \in \BOmega$. Consequently, if $|\rr - \BAR{\rr}| \le \delta'$ then the second integral does not exceed $|\BOmega|\EPS$.
\end{proof}
Using the definition (\ref{eq:neuralfield:15}) of the operator $G$, we see that studying  (\ref{eq:neuralfield:7-1})  
is equivalent to analyzing the following initial value problem
\begin{equation}\tag{NF}
  \label{eq:neuralfield:10}
  \left\{
    \begin{aligned}
      \dot{V}(t) &= -\alpha V(t) + G(V_t) && t \ge 0\\
      V(t) &= \phi(t) && t \in [-h,0]
    \end{aligned}
  \right.  
\end{equation}
where $V : [-h,\infty) \to Y$ is the unknown and $\phi \in X$ is the initial condition. Then \eqref{eq:neuralfield:10} is of the form \eqref{eq:neuralfield:8} when we define $F : X \to Y$ by 
\begin{equation}
  \label{eq:setting:6}
  F(\phi) \DEF -\alpha \phi(0) + G(\phi) \qquad \forall\,\phi \in X
\end{equation}
with $G$ given by \eqref{eq:neuralfield:15}. The notion of a solution of \eqref{eq:neuralfield:8}, and consequently \eqref{eq:neuralfield:10}, is a direct generalisation of the solution concept for classical DDE.
\begin{definition}
  \label{def:setting:1}
  A function $x \in \CONT([-h,\infty);Y) \cap \CONTD{1}([0,\infty);Y)$ that satisfies \eqref{eq:neuralfield:8} is called a {\bf global solution} of \eqref{eq:neuralfield:8}. \hfill \QEDEX
\end{definition}
Sometimes we will omit the qualifier \emph{global} and simply speak of a \emph{solution} of \eqref{eq:neuralfield:8}. We conclude with a simple observation, which follows directly from the fact that {\HS} implies that $S$ satisfies a global Lipschitz condition.
\begin{lemma}
  \label{lem:setting:3}
  The operator $F : X \to Y$ defined by \eqref{eq:setting:6} is globally Lipschitz continuous.
\end{lemma}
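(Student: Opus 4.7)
The plan is to handle the two summands in \eqref{eq:setting:6} separately and then combine them by the triangle inequality. The linear evaluation term $\phi \mapsto -\alpha\phi(0)$ is bounded linear from $X$ to $Y$ with operator norm at most $\alpha$, so it is Lipschitz with constant $\alpha$ and requires no further work. Everything therefore reduces to showing that $G : X \to Y$ is globally Lipschitz, which is the real content.

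First I would extract the two constants that drive the estimate. By \HS, the derivative $S'$ is bounded on $\RR$, so by the mean value theorem $S$ is globally Lipschitz on $\RR$ with constant $L_S \DEF \sup_{u \in \RR}|S'(u)| < \infty$. By \HJ, the kernel $J$ is continuous on the compact set $\BOmega \times \BOmega$, hence $C_J \DEF \sup_{\rr,\rr'\in\BOmega}|J(\rr,\rr')| < \infty$.

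Next, for arbitrary $\phi,\psi \in X$ and $\rr \in \BOmega$, I would estimate pointwise
\begin{align*}
  |G(\phi)(\rr) - G(\psi)(\rr)|
  &\le \int_{\BOmega} |J(\rr,\rr')|\,\bigl|S(\phi(-\tau(\rr,\rr'),\rr')) - S(\psi(-\tau(\rr,\rr'),\rr'))\bigr|\,d\rr' \\
  &\le C_J L_S \int_{\BOmega} \bigl|\phi(-\tau(\rr,\rr'),\rr') - \psi(-\tau(\rr,\rr'),\rr')\bigr|\,d\rr' \\
  &\le C_J L_S\, |\BOmega|\, \|\phi - \psi\|,
\end{align*}
where in the last step I use the definition of the norm on $X$, which bounds the integrand uniformly in $\rr'$ (and in the delayed time argument) by $\|\phi-\psi\|$. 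That $G(\phi) - G(\psi) \in Y$ is already guaranteed by Lemma~\ref{lem:setting:1}, so I may take the supremum over $\rr \in \BOmega$ to conclude that $\|G(\phi) - G(\psi)\| \le C_J L_S |\BOmega|\,\|\phi-\psi\|$.

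Combining this with the Lipschitz estimate for the evaluation term gives
\begin{equation*}
  \|F(\phi) - F(\psi)\| \le \bigl(\alpha + C_J L_S |\BOmega|\bigr)\,\|\phi - \psi\| \qquad \forall\,\phi,\psi \in X,
\end{equation*}
which is the desired global Lipschitz property. There is no real obstacle here; the only point that requires any care is the observation that the boundedness of $S'$ provided by \HS\ is precisely what upgrades the pointwise smoothness of $S$ to a \emph{global} Lipschitz bound, which is in turn what allows the argument to be independent of $\phi$ and $\psi$.
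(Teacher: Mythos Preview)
Your proof is correct and follows essentially the same route as the paper: reduce to a global Lipschitz estimate for $G$, use that $|\phi(-\tau(\rr,\rr'),\rr') - \psi(-\tau(\rr,\rr'),\rr')| \le \|\phi-\psi\|$, and bound by $|\BOmega|\cdot\sup|J|\cdot\sup|S'|\cdot\|\phi-\psi\|$. The only cosmetic difference is that you spell out the linear evaluation term explicitly, whereas the paper simply says ``it suffices to show that $G$ satisfies a global Lipschitz condition.''
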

\begin{proof}
  It suffices to show that $G$ satisfies a global Lipschitz condition. If $\phi, \BAR{\phi} \in X$ and $\rr,\rr' \in \BOmega$, then
  \begin{align*}
    |\phi(-\tau(\rr,\rr'),\rr') - \BAR{\phi}(-\tau(\rr,\rr'),\rr')| &\le \sup_{\rr'' \in \BOmega}{|\phi(-\tau(\rr,\rr'),\rr'') - \BAR{\phi}(-\tau(\rr,\rr'),\rr'')|}\\
    &\le \sup_{t \in [-h,0]}{\sup_{\rr'' \in \BOmega}{|\phi(t,\rr'') - \BAR{\phi}(t,\rr'')|  }} = \|\phi - \BAR{\phi}\|
  \end{align*}
  Hence we obtain
  \begin{align*}
    \|G(\phi)(\rr) - G(\BAR{\phi})(\rr)\| &\le |\BOmega|~\sup_{\BOmega \times \BOmega}{J}~ \sup_{\RR}{S'}~\|\phi - \BAR{\phi}\| \qquad \forall\,\rr \in \BOmega
  \end{align*}
  where the suprema are finite due to {\HJ} and {\HS}. 
\end{proof}

\subsection{Dual semigroups and DDE}\label{sec:setting:sunstar}
In this subsection we provide a very brief introduction to sun-star duality and its consequences for the analysis of \eqref{eq:neuralfield:10}. For a more complete treatment we refer to \cite{Diekmann1995} and, regarding the analysis of abstract DDE, the forthcoming paper \cite{VanGils2012b}. 
\par
In this subsection $Y$ will be a Banach space and $X \DEF \CONT([-h,0];Y)$. In conjunction with \eqref{eq:neuralfield:10} we will assume that $Y = \CONT(\BOmega)$. From an abstract point of view, solving a delay equation amounts to obtaining the future state of the system, say at time $t > 0$, from knowledge of the present state. This is done in two steps. First, the present state (a continuous function on the time segment $[-h,0]$) is extended to the interval $[-h,t]$. Next the part of this extension living on $[t-h,t]$ is shifted back to $[-h,0]$. Dual perturbation theory provides a systematic method to embed $X$ into a bigger Banach space, the so-called sun-star dual $\SUNSTAR{X}$, in which the extension and shifting operations are neatly separated. In broad lines, this works as follows.
\par
If the extension problem is trivial, i.e. if $F \equiv 0$ in \eqref{eq:neuralfield:8}, then the solution semigroup corresponding to \eqref{eq:neuralfield:8} is the shift semigroup $T_0$, defined as
\begin{equation}
  \label{eq:setting:8}
    (T_0(t)\phi)(\theta) =
  \begin{cases}
    \phi(t+\theta) &-h \le t + \theta \le 0\\
    \phi(0)        &\hphantom{-} 0 \le t + \theta 
  \end{cases}
  \quad \forall\,\phi\in X,\,t \ge 0,\,\theta \in [-h,0]
\end{equation}
Let $A_0$ be its infinitesimal generator. We represent $\STAR{X}$ by the space $\NBV([0,h];\STAR{Y})$ of functions $\eta : [0,h] \to \STAR{Y}$ of bounded variation, normalised such that $\eta(0) = 0$ and $\eta(t+) = \eta(t)$ for all $t \in (0,h)$. Elements of $X$ and $\STAR{X}$ are in duality via an abstract bilinear Riemann-Stieltjes integral. Since $X$ is not reflexive, the adjoint semigroup $\STAR{T}_0$ may not be strongly continuous on $\STAR{X}$. Let $\SUN{X} \subset \STAR{X}$ be the maximal subspace of strong continuity of $\STAR{T}_0$. It is easy to see that $\SUN{X}$ is positively $\STAR{T}_0$-invariant and, moreover, 
\begin{equation}
  \label{eq:setting:16}
  \SUN{X} = \BAR{D(\STAR{A}_0)}
\end{equation}
where $\STAR{A}_0$ is the adjoint of $A_0$. Let $\SUN{T}_0$ be the strongly continuous semigroup on $\SUN{X}$ obtained by restriction of $\STAR{T}_0$ to $\SUN{X}$. Its infinitesimal generator $\SUN{A}_0$ is precisely the part of $\STAR{A}_0$ in $\SUN{X}$, 
\begin{displaymath}
  D(\SUN{A}_0) = \{\SUN{\phi} \in D(\STAR{A}_0)\,:\, \STAR{A}_0\SUN{\phi} \in \SUN{X}\}, \qquad \SUN{A}_0\SUN{\phi} = \STAR{A}_0\SUN{\phi}
\end{displaymath}
In  \cite[Thm. 2.2]{Greiner1992} it is shown that $\SUN{X}$ may be identified with $\STAR{Y} \times \LP{1}([0,h];\STAR{Y})$ where the second factor is the space of Bochner integrable $\STAR{Y}$-valued functions on $[0,h]$. 
\par
Performing this construction once more, but now starting from the strongly continuous semigroup $\SUN{T}_0(t)$ on the Banach space $\SUN{X}$, we obtain the adjoint semigroup $\SUNSTAR{T}_0$ on the dual space $\SUNSTAR{X}$ and its strongly continuous restriction $\SUNSUN{T}_0$ to the positively invariant subspace $\SUNSUN{X} = \BAR{D(\SUNSTAR{A}_0)}$. The infinitesimal generator of $\SUNSUN{T}_0$ is again given by the part of $\SUNSTAR{A}_0$ in $\SUNSUN{X}$. Following \cite[\S 1.2]{Arendt2001} we suppose that it is not possible to represent $\SUNSTAR{X} = \STARSTAR{Y} \times \STAR{[\LP{1}([0,h];\STAR{Y})]}$ in terms of known functions or measures, since $\STARSTAR{Y}$ does not have the Radon-Nikodym property. However, the subspace $\SUNSUN{X}$ of strong continuity may be identified with $\CONT([-h,0],\STARSTAR{Y})$, see \cite[Thm. 3.11]{Greiner1992}. Of course this representation is only semi-explicit, since a representation for $\STARSTAR{Y}$ itself is unknown. The original space $X$ is canonically embedded into $\SUNSTAR{X}$ via $j : X \to \SUNSTAR{X}$ given by\footnote{In this paper we adopt the `postfix notation' for the action of a functional on a vector. That is, if $W$ is a Banach space with dual space $\STAR{W}$, $w \in W$ and $\STAR{w} \in \STAR{W}$, then $\PAIR{w}{\STAR{w}} \DEF \STAR{w}(w)$.}
\begin{equation}
  \label{eq:setting:18}
  \PAIR{\SUN{\phi}}{j\phi} \DEF \PAIR{\phi}{\SUN{\phi}} \quad \forall\,\phi \in X,\,\forall\,\SUN{\phi} \in \SUN{X}
\end{equation}
Since $Y$ is not reflexive, it follows that the range of $j$ must be a proper subspace of $\SUNSUN{X}$. This fact is expressed by saying that $X$ is \emph{not} sun-reflexive with respect to the shift semigroup $T_0$, a situation that contrasts the classical case $Y = \RR^n$. 
\par
We proceed to explain how \eqref{eq:neuralfield:8}, and consequently \eqref{eq:neuralfield:10}, fits into the above abstract context. Define $\delta \in \BND(\SUN{X},\STAR{Y})$ as 
\begin{equation}
  \label{eq:setting:13}
  \delta\SUN{\phi} \DEF \STAR{y} \qquad \forall\,\SUN{\phi} = (\STAR{y},g) \in \SUN{X}  
\end{equation}
Then $\STAR{\delta} \in \BND(\STARSTAR{Y},\SUNSTAR{X})$. Let $\ell \in \BND(Y,\SUNSTAR{X})$ be the restriction of $\STAR{\delta}$ to $Y$, viewed as a subspace of $\STARSTAR{Y}$. Explicitly,
\begin{equation}
  \label{eq:setting:12}
  \PAIR{y}{\delta \SUN{\phi}} = \PAIR{\SUN{\phi}}{\ell y} \qquad \forall\,y \in Y,\,\forall\,\SUN{\phi} \in \SUN{X}
\end{equation}
Define $R : X \to \SUNSTAR{X}$ by $R \DEF \ell \circ F$ with $F$ as in \eqref{eq:neuralfield:8}. The following lemma will prove to be useful in \S \ref{sec:normalforms:calc}. Observe that each $(y,f) \in Y \times \LP{\infty}([-h,0];Y)$ defines an element of $\SUNSTAR{X}$. Hence we may identify $Y \times \LP{\infty}([-h,0];Y)$ with a subspace of $\SUNSTAR{X}$.
\begin{lemma}
  \label{lem:setting:5}
  $R(\phi) = (F(\phi),0)$ for all $\phi \in X$. Hence $R$ maps into $Y \times \{0\}$. 
\end{lemma}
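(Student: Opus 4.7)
The plan is to unwind the definition of $R = \ell \circ F$ by working out the action of $\ell y$ on an arbitrary element of $\SUN{X}$ and then matching the result against the embedding $Y \times \LP{\infty}([-h,0];Y) \hookrightarrow \SUNSTAR{X}$ stated just before the lemma. Concretely, I would fix $y \in Y$ (later taking $y = F(\phi)$) and compute $\ell y$ directly from its defining relation \eqref{eq:setting:12}.

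First I would recall the representation of Greiner cited after \eqref{eq:setting:16}, namely $\SUN{X} \cong \STAR{Y} \times \LP{1}([0,h];\STAR{Y})$, together with the pairing with $\SUNSTAR{X}$ that is used implicitly when one views $(y,f) \in Y \times \LP{\infty}([-h,0];Y)$ as an element of $\SUNSTAR{X}$; this pairing must read
\begin{displaymath}
  \PAIR{(\STAR{y},g)}{(y,f)} = \PAIR{y}{\STAR{y}} + \int_0^h \PAIR{f(-\theta)}{g(\theta)}\,d\theta
\end{displaymath}
(up to the sign convention chosen in the identification), since this is what restricts to \eqref{eq:setting:18} on the canonical image $jX$ and is compatible with the projection $\delta(\STAR{y},g) = \STAR{y}$ defined in \eqref{eq:setting:13}.

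Next I would compute, for any $\SUN{\phi} = (\STAR{y},g) \in \SUN{X}$,
\begin{displaymath}
  \PAIR{\SUN{\phi}}{\ell y} = \PAIR{y}{\delta \SUN{\phi}} = \PAIR{y}{\STAR{y}},
\end{displaymath}
the first equality being \eqref{eq:setting:12} and the second being \eqref{eq:setting:13}. Comparing with the pairing formula above, I conclude that $\ell y$ acts on $\SUN{X}$ exactly as the element $(y,0) \in Y \times \LP{\infty}([-h,0];Y)$ does, so under the stated identification $\ell y = (y,0)$. Applying this with $y = F(\phi)$ yields $R(\phi) = \ell(F(\phi)) = (F(\phi),0)$, which lies in $Y \times \{0\}$ as claimed.

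The only subtle point is to make sure one uses the \emph{correct} dual pairing between $\SUN{X}$ and the subspace $Y \times \LP{\infty}([-h,0];Y)$ of $\SUNSTAR{X}$; once that pairing is written down (which is forced by compatibility with \eqref{eq:setting:18} and the action of $\STAR{\delta}$ on $\STARSTAR{Y}$), the proof is a one-line verification. There is no analytic difficulty, since no limits, regularity, or density arguments are needed — the statement is really an algebraic identity about how $\ell$ is built out of $\delta$.
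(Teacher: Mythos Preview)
Your proof is correct and follows essentially the same route as the paper: both compute $\PAIR{\SUN{\phi}}{\ell y} = \PAIR{y}{\delta\SUN{\phi}} = \PAIR{y}{\STAR{y}}$ via \eqref{eq:setting:12}--\eqref{eq:setting:13} and then read off the identification $\ell y = (y,0)$ from the product structure of $\SUN{X} = \STAR{Y} \times \LP{1}([0,h];\STAR{Y})$. The only cosmetic difference is that the paper writes $R(\phi)$ first as an abstract element $(\STARSTAR{y},\STAR{w}) \in \STARSTAR{Y} \times \STAR{[\LP{1}]}$ and identifies its components a posteriori, whereas you compute $\ell y$ directly; both are the same one-line duality calculation.
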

\begin{proof}
  Let $\phi \in X$ and write $R(\phi) = (\STARSTAR{y},\STAR{w}) \in \SUNSTAR{X}$ for certain $\STARSTAR{y} \in \STARSTAR{Y}$ and $\STAR{w} \in \STAR{[\LP{1}([0,h];\STAR{Y})]}$. Then, for any $\SUN{\phi} = (\STAR{y},g) \in \SUN{X}$, 
  \begin{equation}
    \label{eq:setting:14}
    \PAIR{\SUN{\phi}}{R(\phi)} = \PAIR{\STAR{y}}{\STARSTAR{y}} + \PAIR{g}{\STAR{w}}
  \end{equation}
  On the other hand, from the definition of $R$ we obtain
  \begin{equation}
    \label{eq:setting:15}
    \PAIR{\SUN{\phi}}{R(\phi)} = \PAIR{\SUN{\phi}}{\ell F(\phi)} = \PAIR{F(\phi)}{\delta\SUN{\phi}} = \PAIR{F(\phi)}{\STAR{y}}
  \end{equation}
  where in the second equality we used \eqref{eq:setting:12} and the third equality is due to \eqref{eq:setting:13}. By comparing \eqref{eq:setting:14} and \eqref{eq:setting:15} we see that $\STARSTAR{y}$ acts on $\STAR{y}$ by point evaluation in $F(\phi) \in Y$ and $\STAR{w} = 0$. Hence $R(\phi) = (F(\phi),0)$ and consequently $R$ maps into $Y \times \{0\}$.
 
\end{proof}
\begin{remark}
  \label{rem:setting:1}
  In the `classical' case where $Y = \RR^n$, the previous lemma shows that $R$ is a (possibly non-linear) operator of finite rank that takes values in the `point component' $Y$ only, see \cite[\S\S III.3 and VII.6]{Diekmann1995}. In the present setting with $\DIM{Y} = \infty$ we lose the former, but retain the latter property. \hfill \QEDEX
\end{remark}
We now consider the so-called {\it abstract integral equation} of the form
\begin{equation}\tag{AIE}
  \label{eq:setting:7}
    u(t) = T_0(t)\phi + j^{-1}\Bigl(\int_0^t{\SUNSTAR{T}_0(t - s)R(u(s))\,ds} \Bigr) \quad \forall\,t \ge 0
\end{equation}
where $\phi \in X$ is an initial condition, $u \in \CONT([0,\infty);X)$ is the unknown and the convolution integral is of {\WSTAR} Riemann type, see \cite[\S III.1]{Diekmann1995} and also \cite[Interlude 3.13 in Appendix II]{Diekmann1995}. In \cite{VanGils2012b} it is shown that this convolution integral takes values in the range of $j$. Consequently, the right-hand side of \eqref{eq:setting:7} is well-defined. The connection between \eqref{eq:neuralfield:8} and \eqref{eq:setting:7} is revealed in the following theorem. 
\begin{theorem}[Equivalence of \eqref{eq:neuralfield:8} and \eqref{eq:setting:7}]
  \label{thm:setting:1}
  Let $\phi \in X$ be given and let $R = \ell \circ F$ with $F \in \CONT(X,Y)$. The following two statements hold.
  \begin{enumerate}[itemsep=1ex,leftmargin=3em]
  \item[\textup{(i)}]
    Suppose that $u \in \CONT([0,\infty);X)$ satisfies \eqref{eq:setting:7}. Define $x : [-h,\infty) \to Y$ by $x_0 \DEF \phi$ and $x(t) = u(t)(0)$ for $t \ge 0$. Then $x$ is a global solution of \eqref{eq:neuralfield:8} in the sense of Definition \ref{def:setting:1}.
  \item[\textup{(ii)}] 
    Conversely, suppose that $x$ is a global solution of \eqref{eq:neuralfield:8}. Define $u : [0,\infty) \to X$ by $u(t) \DEF x_t$. Then $u \in \CONT([0,\infty);X)$ and $u$ satisfies \eqref{eq:setting:7}. 
  \end{enumerate}
\end{theorem}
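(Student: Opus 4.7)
The plan is to reduce both directions to a single explicit formula that describes the pointwise action (in the time variable $\theta \in [-h,0]$) of the convolution integral appearing in \eqref{eq:setting:7}. By Lemma \ref{lem:setting:5}, $R(u(s)) = (F(u(s)),0) \in Y \times \{0\} \subset \SUNSTAR{X}$, so the weak-star Riemann integral only involves the ``point component''. Combined with the explicit form \eqref{eq:setting:8} of $T_0$ and the identification of $\SUN{X}$ with $\STAR{Y} \times \LP{1}([0,h];\STAR{Y})$, this yields the identity
\begin{equation*}
  j^{-1}\Bigl(\int_0^t \SUNSTAR{T}_0(t - s) R(u(s))\,ds\Bigr)(\theta)
  = \begin{cases} \int_0^{t+\theta} F(u(s))\,ds & \text{if } t + \theta \ge 0,\\ 0 & \text{if } t + \theta < 0,\end{cases}
\end{equation*}
for $\phi \in X$, $t \ge 0$ and $\theta \in [-h,0]$. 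This is the standard variation-of-constants representation in sun-star calculus; it is verified by pairing with an arbitrary $\SUN{\phi} = (\STAR{y}, g) \in \SUN{X}$, invoking \eqref{eq:setting:12}--\eqref{eq:setting:13} and \eqref{eq:setting:18}, and then splitting the $s$-integration at $s = t + \theta$ to separate the ``past'' contribution (absorbed into the $g$-component after shifting) from the ``present'' contribution (the $\STAR{y}$-component).

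Granted this identity, combining it with the explicit form of $T_0(t)\phi$ in \eqref{eq:setting:8} shows that a continuous solution $u$ of \eqref{eq:setting:7} must satisfy, for all $t \ge 0$ and $\theta \in [-h,0]$,
\begin{equation*}
  u(t)(\theta) = \begin{cases} \phi(t+\theta) & \text{if } t+\theta \le 0,\\ \phi(0) + \int_0^{t+\theta} F(u(s))\,ds & \text{if } t+\theta \ge 0.\end{cases}
\end{equation*}
This is the pointwise identity that mediates between \eqref{eq:neuralfield:8} and \eqref{eq:setting:7}.

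To prove (ii), suppose $x$ is a global solution in the sense of Definition \ref{def:setting:1} and set $u(t) \DEF x_t$. The continuity $u \in \CONT([0,\infty);X)$ follows from the uniform continuity of $x$ on every compact subinterval of $[-h,\infty)$. Since $x$ is $\CONTD{1}$ on $[0,\infty)$ with $\dot{x}(s) = F(x_s)$, we have $x(t+\theta) = \phi(0) + \int_0^{t+\theta} F(x_s)\,ds$ whenever $t+\theta \ge 0$, and $x(t+\theta) = \phi(t+\theta)$ when $t+\theta \le 0$. Matching with the pointwise identity above (and using $x_s = u(s)$) verifies that $u$ satisfies \eqref{eq:setting:7}. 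For (i), given $u \in \CONT([0,\infty);X)$ satisfying \eqref{eq:setting:7} and defining $x$ as in the statement, the pointwise identity shows that $u(t)(\theta) = x(t+\theta)$, i.e.\ $u(t) = x_t$; this immediately gives continuity of $x$ on $[-h,\infty)$ and the matching of initial data. Evaluating at $\theta = 0$ yields $x(t) = \phi(0) + \int_0^t F(x_s)\,ds$, and since $s \mapsto F(x_s) = F(u(s))$ is continuous by Lemma \ref{lem:setting:3}, differentiation gives $x \in \CONTD{1}([0,\infty);Y)$ with $\dot{x}(t) = F(x_t)$.

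The principal obstacle is the rigorous evaluation of the weak-star Riemann integral $\int_0^t \SUNSTAR{T}_0(t-s)R(u(s))\,ds$; this is where the non-sun-reflexivity of $X$ and the absence of a concrete representation of $\SUNSTAR{X}$ (flagged in \S\ref{sec:setting:sunstar}) create genuine work. The saving grace is Lemma \ref{lem:setting:5}: because $R$ takes values in the representable subspace $Y \times \{0\}$, only the action of $\SUNSTAR{T}_0$ on this subspace matters, and that action can be computed by duality against the explicitly known space $\SUN{X} \cong \STAR{Y} \times \LP{1}([0,h];\STAR{Y})$. The detailed verification, identical in structure to the classical $Y = \RR^n$ case treated in \cite[\S II.5 and \S VII.6]{Diekmann1995}, is carried out in the forthcoming paper \cite{VanGils2012b}.
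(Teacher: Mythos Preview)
The paper does not actually provide a proof of this theorem: it is stated without proof, and the paragraph preceding it explicitly defers the verification that the convolution integral takes values in $j(X)$ to the forthcoming paper \cite{VanGils2012b}. Your outline is precisely the standard argument one expects here and is consistent with what \cite{VanGils2012b} presumably contains; in that sense your proposal and the paper agree, since both ultimately point to the same external reference for the hard analytic step.

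One minor remark: in part (i) you invoke Lemma \ref{lem:setting:3} to conclude that $s \mapsto F(u(s))$ is continuous, but that lemma concerns the specific nonlinearity of \eqref{eq:neuralfield:10}. The theorem as stated only assumes $F \in \CONT(X,Y)$, which already suffices since $u$ is continuous; no Lipschitz condition is needed for the differentiation step.
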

It is routine to show that \eqref{eq:setting:7} admits unique global solutions on $[0,\infty)$ if $F$ is globally Lipschitz continuous. Thus, by Lemma \ref{lem:setting:3} and Theorem \ref{thm:setting:1} we find
\begin{corollary}
  \label{cor:setting:1}
  For any $\phi \in X$ problem \eqref{eq:neuralfield:10} has a unique global solution.
\end{corollary}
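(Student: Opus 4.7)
The plan is to obtain the corollary by proving unique global solvability of the abstract integral equation \eqref{eq:setting:7} and then transferring the result back to \eqref{eq:neuralfield:10} via Theorem \ref{thm:setting:1}. Lemma \ref{lem:setting:3} already supplies global Lipschitz continuity of $F$; composing with the bounded linear map $\ell$ introduced in \eqref{eq:setting:12} we likewise obtain that $R = \ell \circ F : X \to \SUNSTAR{X}$ is globally Lipschitz with some constant $L$.

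Fix $\phi \in X$. For $T > 0$ to be chosen I would define a map $\Psi_T : \CONT([0,T];X) \to \CONT([0,T];X)$ by
\begin{displaymath}
  (\Psi_T u)(t) \DEF T_0(t)\phi + j^{-1}\Bigl(\int_0^t \SUNSTAR{T}_0(t-s) R(u(s))\,ds\Bigr), \qquad t \in [0,T].
\end{displaymath}
The key subtlety — and, to my mind, the only non-routine point — is that the {\WSTAR} Riemann convolution lives a priori in $\SUNSTAR{X}$ and must be shown to take values in the embedded copy $j(X)$ so that $j^{-1}$ applies; this invariance is exactly what is established in \cite{VanGils2012b} and it is what makes the AIE formulation meaningful in the non-sun-reflexive setting at hand. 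Using the uniform bound $\|\SUNSTAR{T}_0(t)\| \le M$ on the compact interval $[0,T]$ (which follows from the standard semigroup growth estimate applied to $\SUN{T}_0$ and adjointness) together with the Lipschitz estimate on $R$, a routine computation yields
\begin{displaymath}
  \|\Psi_T u - \Psi_T v\|_{\CONT([0,T];X)} \le M L T \, \|u - v\|_{\CONT([0,T];X)},
\end{displaymath}
so that choosing $T < 1/(ML)$ turns $\Psi_T$ into a strict contraction. The Banach fixed-point theorem then produces a unique solution $u$ of \eqref{eq:setting:7} on $[0,T]$.

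Crucially, because $L$, $M$ and hence $T$ depend neither on $\phi$ nor on any a priori bound on $u$, the construction can be iterated: take $u(T)$ as a new initial state, extend by time steps of length $T$, and patch the pieces into a unique continuous $u : [0,\infty) \to X$ solving \eqref{eq:setting:7}. Applying Theorem \ref{thm:setting:1}(i) to this $u$ yields a global solution $V$ of \eqref{eq:neuralfield:10}. Uniqueness transfers in the opposite direction: any second global solution $\widetilde{V}$ of \eqref{eq:neuralfield:10} would, by Theorem \ref{thm:setting:1}(ii), produce a second continuous $X$-valued solution of \eqref{eq:setting:7} with the same initial datum, contradicting the fixed-point uniqueness on each successive subinterval.
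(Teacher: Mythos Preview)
Your proposal is correct and follows essentially the same route as the paper: the paper merely asserts (without details) that it is ``routine'' to show \eqref{eq:setting:7} admits unique global solutions when $F$ is globally Lipschitz, and then invokes Lemma~\ref{lem:setting:3} and Theorem~\ref{thm:setting:1}. You have simply supplied the standard Banach fixed-point argument that the paper omits, together with the same transfer via Theorem~\ref{thm:setting:1}; nothing in your approach diverges from the intended line.
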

Of course, establishing well-posedness for \eqref{eq:neuralfield:10} does not require sun-star duality. Yet, it turns out that \eqref{eq:setting:7} is a very convenient tool in proving many standard results of dynamical systems for abstract DDE, such as the principle of linearised (in)stability, the existence of stable, unstable, and center manifolds, and theorems on local bifurcation. The following linearisation theorem is a direct generalisation of the corresponding result in the sun-reflexive case, see \cite[\S VII.5]{Diekmann1995} or \cite{Clement1989}.
\begin{theorem}[Linearisation at a steady state]
  \label{thm:setting:2}
  Let $F \in \CONTD{1}(X,Y)$ and $R = \ell \circ F$ and let $\Sigma$ be the strongly continuous non-linear semiflow on $X$ associated with \eqref{eq:setting:7}. Let $\hat{\phi} \in X$ be a steady state of $\Sigma$, i.e. $\Sigma(t)(\hat{\phi})=\hat{\phi}$ for all $t \geq 0$. The following statements are true.
  \begin{enumerate}[itemsep=1ex,leftmargin=3em]
  \item[\textup{(i)}]
    For each $t \ge 0$ the operator $\Sigma(t) : X \to X$ is continuously Fr\'echet differentiable in $\hat{\phi}$ with derivative $D\Sigma(t)(\hat{\phi}) \in \BND(X)$.
  \item[\textup{(ii)}]
    Upon defining $T(t) \DEF D\Sigma(t)(\hat{\phi})$ for each $t \ge 0$ one obtains a strongly continuous semigroup in $\BND(X)$. The domain of its generator $A$ is given by 
    \begin{equation}
      \label{eq:setting:11}
      D(A) = \{\phi \in X\,:\,\phi' \in X \text{ and } \phi'(0) = DF(\hat{\phi})\phi\}, \qquad A\phi = \phi'
    \end{equation}
  \item[\textup{(iii)}]
    For every $\phi \in X$ the function $T(\cdot)\phi \in \CONT([0,\infty),X)$ is the unique global solution of the linear abstract integral equation
    \begin{displaymath}
      T(t)\phi = T_0(t)\phi + j^{-1}\Bigl(\int_0^t{\SUNSTAR{T}_0(t - s)\ell DF(\hat{\phi})T(s)\phi\,ds} \Bigr)
    \end{displaymath}
\end{enumerate}
\end{theorem}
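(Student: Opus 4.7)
The plan is to derive all three statements from the abstract integral equation (\ref{eq:setting:7}) combined with the sun-star perturbation machinery, following the pattern of \cite[\S VII.5]{Diekmann1995} and invoking the extension to the non-sun-reflexive setting of \cite{VanGils2012b}.

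For parts (i) and (iii), I would view (\ref{eq:setting:7}) as a parametrised fixed-point problem on $\CONT([0,T];X)$ in the parameter $\phi \in X$. Since $F \in \CONTD{1}(X,Y)$, the composition $R = \ell \circ F : X \to \SUNSTAR{X}$ is continuously Fr\'echet differentiable, and by \cite{VanGils2012b} the weak-$*$ convolution appearing in (\ref{eq:setting:7}) maps into the range of $j$ with the required Lipschitz/linear bounds. A standard implicit function theorem argument then shows that the unique solution $u(\cdot\,;\phi)$ depends $\CONTD{1}$-ly on $\phi$. Differentiating (\ref{eq:setting:7}) with respect to $\phi$ at $\phi = \hat\phi$, and using that $u(t;\hat\phi) = \hat\phi$ for all $t \ge 0$, yields the linear AIE of (iii) with $T(t) \DEF D\Sigma(t)(\hat\phi)$, establishing (i) and (iii) simultaneously.

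For the semigroup property in (ii), I would differentiate the semiflow identity $\Sigma(t+s) = \Sigma(t) \circ \Sigma(s)$ at $\hat\phi$ with the chain rule and invoke $\Sigma(s)(\hat\phi) = \hat\phi$ to obtain $T(t+s) = T(t)T(s)$. Strong continuity then follows from (iii), since for every $\phi \in X$ the map $t \mapsto T(t)\phi$ is continuous into $X$ as the solution of a linear abstract integral equation.

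The technical heart of the theorem---and the expected obstacle---is the explicit description (\ref{eq:setting:11}) of $D(A)$. Abstractly, $A$ must coincide with the part in $j(X) \cong X$ of $\SUNSTAR{A}_0 + \ell DF(\hat\phi) \circ j^{-1}$. Using the representation $\SUN{X} \cong \STAR{Y} \times \LP{1}([0,h];\STAR{Y})$ of \cite[Thm. 2.2]{Greiner1992} together with $\SUNSUN{X} \cong \CONT([-h,0],\STARSTAR{Y})$ of \cite[Thm. 3.11]{Greiner1992}, a direct duality computation identifies $D(\SUNSTAR{A}_0) \cap j(X)$ with the differentiable functions $\phi \in X$ satisfying $\phi' \in X$, and describes $\SUNSTAR{A}_0 j\phi$ explicitly in the product decomposition. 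By Lemma \ref{lem:setting:5} the perturbation $\ell DF(\hat\phi)\phi$ lies in $Y \times \{0\} \subset \SUNSTAR{X}$, so the requirement that $(\SUNSTAR{A}_0 + \ell DF(\hat\phi) j^{-1}) j\phi$ return to $j(X)$ reduces exactly to the boundary condition $\phi'(0) = DF(\hat\phi)\phi$, in which case the image equals $j\phi'$, i.e.\ $A\phi = \phi'$. The delicate point throughout is the lack of sun-reflexivity of $X$: the classical perturbation theorem of \cite{Clement1988} does not apply verbatim, which is precisely why the abstract DDE framework of \cite{VanGils2012b} is needed both to justify the variation-of-constants formulation used above and to validate the generator computation.
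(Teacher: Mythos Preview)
The paper does not actually prove this theorem: it is stated without proof, introduced as ``a direct generalisation of the corresponding result in the sun-reflexive case, see \cite[\S VII.5]{Diekmann1995} or \cite{Clement1989}'', with the technical details for the non-sun-reflexive setting deferred to the forthcoming paper \cite{VanGils2012b}. Your proposal is therefore not competing against a proof in the present paper but rather filling in what the authors deliberately omitted.

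That said, your sketch follows exactly the route indicated by those references. The fixed-point/implicit-function argument on \eqref{eq:setting:7} for (i) and (iii), the chain-rule derivation of the semigroup property from $\Sigma(t+s) = \Sigma(t) \circ \Sigma(s)$, and the identification of $A$ as the part in $j(X)$ of $\SUNSTAR{A}_0 + \ell DF(\hat\phi) \circ j^{-1}$ are the standard steps. Your explicit flagging of non-sun-reflexivity as the obstruction---and your reliance on \cite{VanGils2012b} both for the range condition on the {\WSTAR} convolution and for the generator computation---is precisely the paper's own stance; indeed, immediately after the theorem the authors note that $D(\SUNSTAR{A}_0) \not\subseteq j(X)$ and state the key ingredient (Lemma~\ref{lem:setting:4}) also without proof. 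So your plan is consistent with, and at the same level of detail as, what the paper itself provides.
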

We observe that the above theorem produces a new strongly continuous semigroup $T$ on $X$ with generator $A$. For this semigroup we may likewise calculate the sun-star duality structure, just as we did for the shift semigroup $T_0$ defined by \eqref{eq:setting:8}. It turns out that the spaces $\SUN{X}$ and, consequently, $\SUNSTAR{X}$ are the \emph{same} for both semigroups. Indeed, if we put $B \DEF \ell \circ DF(\hat{\phi}) \in \BND(X,\SUNSTAR{X})$ and slightly abuse notation by writing $\STAR{B} \in \BND(\SUN{X},\STAR{X})$ for the restriction of the adjoint of $B$ to $\SUN{X}$, then just as in the sun-reflexive case \cite[\S III.2]{Diekmann1995} one proves that the adjoint of the generator $A$ of $T$ is given by
  \begin{displaymath}
    D(\STAR{A}) = D(\STAR{A}_0), \qquad \STAR{A}= \STAR{A}_0 + \STAR{B}
  \end{displaymath}
  By \eqref{eq:setting:16} the sun-duals of $X$ with respect to $T_0$ and $T$ are identical and may both be denoted by $\SUN{X}$. Moreover,
  \begin{equation}
    \label{eq:setting:10}
    D(\SUN{A}) = \{\SUN{\phi} \in D(\STAR{A})\,:\, \STAR{A}\SUN{\phi} \in \SUN{X}\}, \qquad \SUN{A}= \STAR{A}
  \end{equation}
  Let $\SUNSTAR{A} : D(\SUNSTAR{A}) \subseteq \SUNSTAR{X} \to \SUNSTAR{X}$ be its adjoint. For $\SUNSTAR{A}$ the situation is slightly more difficult than in the sun-reflexive case, because $D(\SUNSTAR{A}_0) \not\subseteq j(X)$. (Indeed, if it were true that $D(\SUNSTAR{A}_0) \subseteq j(X)$, then it would follow that $\SUNSUN{X} = \BAR{D(\SUNSTAR{A}_0)} \subseteq j(X)$ and $X$ would be sun-reflexive with respect to $T_0$, also see \cite[\S III.8]{Diekmann1995}.) The next lemma is sufficient for our purposes in \S \ref{sec:normalforms:calc}.
\begin{lemma}
  \label{lem:setting:4}
  If $\phi \in \CONTD{1}([-h,0];Y)$ then $j\phi \in D(\SUNSTAR{A})$ and $\SUNSTAR{A}j\phi = (0,\phi') + (DF(\hat{\phi})\phi,0)$. 
\end{lemma}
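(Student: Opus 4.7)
My plan is to verify the claim directly from the definition of the sun-star bidual generator: $j\phi \in D(\SUNSTAR{A})$ if and only if the functional $L_\phi \colon \SUN{\psi} \mapsto \PAIR{\SUN{A}\SUN{\psi}}{j\phi}$ defined on $D(\SUN{A})$ extends continuously to all of $\SUN{X}$, in which case $\SUNSTAR{A}j\phi \in \SUNSTAR{X}$ is its representer. Using the decomposition $\STAR{A} = \STAR{A}_0 + \STAR{B}$ from \eqref{eq:setting:10}, I will split $L_\phi$ into an $\STAR{A}_0$-contribution and an $\STAR{B}$-contribution and identify each one as a $\SUN{X}$-$\SUNSTAR{X}$ pairing against an explicit element of $\SUNSTAR{X}$.

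For any $\SUN{\psi} \in D(\SUN{A}) \subseteq D(\STAR{A}_0)$, since $\STAR{A}\SUN{\psi} = \SUN{A}\SUN{\psi} \in \SUN{X} \subseteq \STAR{X}$, the embedding identity \eqref{eq:setting:18} yields
\[
    L_\phi(\SUN{\psi}) = \PAIR{\phi}{\STAR{A}_0\SUN{\psi}} + \PAIR{\phi}{\STAR{B}\SUN{\psi}}.
\]
For the bounded summand I use the adjoint definition of $\STAR{B}$ and the duality relation \eqref{eq:setting:12} between $\ell$ and $\delta$:
\[
    \PAIR{\phi}{\STAR{B}\SUN{\psi}} = \PAIR{\ell DF(\hat{\phi})\phi}{\SUN{\psi}} = \PAIR{DF(\hat{\phi})\phi}{\delta\SUN{\psi}}.
\]
Writing $\SUN{\psi} = (\STAR{y},g) \in \STAR{Y} \times \LP{1}([0,h];\STAR{Y})$ so that $\delta\SUN{\psi} = \STAR{y}$, the right-hand side equals the pairing of $\SUN{\psi}$ with $(DF(\hat{\phi})\phi, 0) \in Y \times \LP{\infty}([-h,0];Y) \subset \SUNSTAR{X}$, in the identification highlighted just before the lemma.

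The unbounded summand requires a short direct calculation with the shift. The adjoint relation $\PAIR{\phi}{\STAR{T}_0(t)\SUN{\psi}} = \PAIR{T_0(t)\phi}{\SUN{\psi}}$, together with $\SUN{\psi} \in D(\STAR{A}_0)$, yields
\[
    \PAIR{\phi}{\STAR{A}_0\SUN{\psi}} = \lim_{t \downarrow 0}\, t^{-1}\!\left[\PAIR{T_0(t)\phi}{\SUN{\psi}} - \PAIR{\phi}{\SUN{\psi}}\right].
\]
Applying \eqref{eq:setting:8} and splitting the integral over $[0,h]$ at $s = t$ converts the bracketed difference into
\[
    \int_0^t \PAIR{\phi(0)-\phi(-s)}{g(s)}\,ds + \int_t^h \PAIR{\phi(t-s)-\phi(-s)}{g(s)}\,ds.
\]
Because $\phi \in \CONTD{1}([-h,0];Y)$, the bound $\|\phi(0)-\phi(-s)\| \le C s$ together with absolute continuity of the Lebesgue integral forces the first integral to be $o(t)$, while on $[t,h]$ the uniform Taylor expansion $\phi(t-s)-\phi(-s) = t\phi'(-s) + o(t)$ yields in the limit $\int_0^h \PAIR{\phi'(-s)}{g(s)}\,ds$, which is precisely the pairing of $\SUN{\psi}$ with $(0,\phi') \in Y \times \LP{\infty}([-h,0];Y) \subset \SUNSTAR{X}$.

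Adding the two contributions,
\[
    L_\phi(\SUN{\psi}) = \PAIR{\SUN{\psi}}{(0,\phi') + (DF(\hat{\phi})\phi, 0)} \qquad \forall\, \SUN{\psi} \in D(\SUN{A}),
\]
whose right-hand side is visibly continuous in $\SUN{\psi}$ for the $\SUN{X}$-norm. This at once verifies $j\phi \in D(\SUNSTAR{A})$ and identifies $\SUNSTAR{A}j\phi$ as stated. The main technical obstacle is the careful $o(t)$ control in the $\STAR{A}_0$-term uniformly over $g \in \LP{1}([0,h];\STAR{Y})$; if desired this can be bypassed by first verifying the identity on a dense subclass of $\SUN{\psi}$ with continuous $g$ (where an elementary integration by parts suffices) and then extending by continuity, using that both sides define bounded linear functionals of $\SUN{\psi} \in \SUN{X}$.
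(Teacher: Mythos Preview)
Your argument is correct. The paper itself states this lemma without proof (it is deferred to the companion paper \cite{VanGils2012b}), so there is no in-text argument to compare against; your approach via the definition of the adjoint $\SUNSTAR{A}$ combined with the splitting $\STAR{A}=\STAR{A}_0+\STAR{B}$ is precisely the natural route and all steps are sound.

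Two small remarks. First, the pairing $\PAIR{\ell DF(\hat{\phi})\phi}{\SUN{\psi}}$ is written in the wrong order for the paper's postfix convention; it should read $\PAIR{\SUN{\psi}}{\ell DF(\hat{\phi})\phi}$, since $\ell DF(\hat{\phi})\phi\in\SUNSTAR{X}$ and $\SUN{\psi}\in\SUN{X}$. Second, when you split the bracketed difference into the two integrals you silently drop the $\STAR{y}$-contribution; it would help the reader to note explicitly that $(T_0(t)\phi)(0)=\phi(0)$ for $t\ge 0$, so that the $\STAR{y}$-terms cancel. Neither point affects the validity of the proof.
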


\subsection{Differentiability results}\label{sec:setting:diff}
The following two results concern the smoothness of the operator $G$ defined by \eqref{eq:neuralfield:15} and appearing in the right-hand side of \eqref{eq:neuralfield:10}. When $k = 1,2,\ldots$ we denote by $\BND_k(X,Y)$ the space of bounded $k$-linear operators from $X$ to $Y$. When $k = 1$ we write $\BND(X,Y)$ instead of $\BND_1(X,Y)$. For a review of differentiation in Banach spaces, we recommend \cite[Ch. 9]{Arbogast1999}.
\begin{lemma}
  \label{lem:neuralfield:9}
  The operator $G : X \to Y$ defined by \eqref{eq:neuralfield:15} is Fr\'echet differentiable with derivative $DG(\phi) \in \BND(X,Y)$ in the point $\phi \in X$ given by
  \begin{equation}
    \label{eq:setting:17}
    (DG(\phi)\psi)(\rr) = \int_{\BOmega}{J(\rr,\rr')S'(\phi(-\tau(\rr,\rr'),\rr'))\psi(-\tau(\rr,\rr'),\rr'))\,d\rr'}
  \end{equation}
  for all $\psi \in X$ and all $\rr \in \BOmega$.
\end{lemma}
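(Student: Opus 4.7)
The plan is to proceed in three steps: first show that the candidate derivative $DG(\phi)$ maps $X$ into $Y$ and is bounded linear, then derive a pointwise quadratic remainder estimate, and finally conclude Fréchet differentiability by noting this estimate is uniform in $\rr$.

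For the first step, fix $\phi \in X$ and note that $\psi \mapsto DG(\phi)\psi$ is obviously linear. To see $DG(\phi)\psi \in Y$, I would repeat the uniform-continuity argument of Lemma \ref{lem:setting:1}: for $\rr, \BAR{\rr} \in \BOmega$, split the difference $(DG(\phi)\psi)(\rr) - (DG(\phi)\psi)(\BAR{\rr})$ by inserting the two intermediate integrands $J(\BAR{\rr},\rr')S'(\phi(-\tau(\rr,\rr'),\rr'))\psi(-\tau(\rr,\rr'),\rr')$ and $J(\BAR{\rr},\rr')S'(\phi(-\tau(\BAR{\rr},\rr'),\rr'))\psi(-\tau(\rr,\rr'),\rr')$. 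Using the uniform continuity of $J$ on $\BOmega \times \BOmega$, of $S'$ on a compact interval containing the range of $\phi$, and of the composed map $(\rr,\rr') \mapsto \psi(-\tau(\rr,\rr'),\rr')$ (which is continuous by exactly the argument given for \eqref{eq:setting:4}), each of the three pieces can be made arbitrarily small. Boundedness is immediate from
\begin{displaymath}
  \|DG(\phi)\psi\| \le |\BOmega| \cdot \SUPN{J} \cdot \SUPN{S'} \cdot \|\psi\|,
\end{displaymath}
with both suprema finite by {\HJ} and {\HS}.

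For the second step, the crucial ingredient is the second-order Taylor estimate
\begin{displaymath}
  |S(a + b) - S(a) - S'(a)b| \le \tfrac{1}{2} \SUPN{S''}\, b^2 \qquad \forall\,a,b \in \RR,
\end{displaymath}
which is available precisely because {\HS} guarantees that $S''$ is bounded. Applying this pointwise with $a = \phi(-\tau(\rr,\rr'),\rr')$ and $b = \psi(-\tau(\rr,\rr'),\rr')$ inside the integral defining $G(\phi + \psi) - G(\phi) - DG(\phi)\psi$ yields
\begin{displaymath}
  |G(\phi + \psi)(\rr) - G(\phi)(\rr) - (DG(\phi)\psi)(\rr)| \le \tfrac{1}{2} |\BOmega| \cdot \SUPN{J} \cdot \SUPN{S''} \cdot \|\psi\|^2
\end{displaymath}
for every $\rr \in \BOmega$. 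Since the right-hand side is independent of $\rr$, the $Y$-norm of the remainder is $O(\|\psi\|^2)$, which is certainly $o(\|\psi\|)$ as $\psi \to 0$ in $X$. This establishes Fréchet differentiability at the arbitrary point $\phi$ with the stated derivative.

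The main obstacle is really just the continuity verification in the first step: showing $DG(\phi)\psi$ actually lies in $Y$ requires tracking three independent moduli of continuity through the composition. The differentiability itself is then essentially a one-line consequence of the quadratic Taylor bound, exploiting crucially that {\HS} provides uniform control of $S''$, not merely of $S'$.
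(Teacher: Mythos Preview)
Your proof is correct, and the first step matches the paper's treatment closely (the paper also notes that $DG(\phi)\psi \in Y$ by the same argument as Lemma~\ref{lem:setting:1} and derives the same boundedness estimate). The difference lies in the second step. The paper does not invoke a second-order Taylor bound; instead it applies the Mean Value Theorem to write
\[
S(\phi^{\tau}+\eta^{\tau}) - S(\phi^{\tau}) - S'(\phi^{\tau})\eta^{\tau}
= \bigl[S'(\phi^{\tau}+c\,\eta^{\tau}) - S'(\phi^{\tau})\bigr]\eta^{\tau}
\]
and then uses only the \emph{uniform continuity of $S'$} on a compact interval containing the range of $\phi^{\tau}$ to make the bracket small. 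This yields the qualitative $o(\|\eta\|)$ conclusion rather than your quantitative $O(\|\psi\|^2)$ bound. Your route is shorter and sharper but spends one extra derivative (the global bound on $S''$), which is harmless here since {\HS} provides it. The paper's route, by contrast, uses only continuity of $S'$, which is the minimal hypothesis under which the formula for $DG(\phi)$ even makes sense; this is also the pattern the paper reuses verbatim in the induction step of Proposition~\ref{prop:setting:1}.
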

\begin{proof}
  First we consider the operator $DG(\phi)$ defined by the right-hand side of \eqref{eq:setting:17}. Using standard methods as in the proof of Lemma \ref{lem:setting:1} one shows that $DG(\phi)\psi \in Y$ for all $\psi \in X$. These steps are omitted. As in the proof of Lemma \ref{lem:setting:3} we begin by noting that if $\rr, \rr' \in \BOmega$ then
  \begin{equation} 
    \label{eq:setting:1}
    |\psi(-\tau(\rr,\rr'),\rr')| \le \sup_{\rr'' \in \BOmega}{|\psi(-\tau(\rr,\rr'),\rr'')|} \le \sup_{t \in [-h,0]}{\sup_{\rr'' \in \BOmega}{|\psi(t,\rr'')|}} = \|\psi\|
  \end{equation}
  This implies that $\|(DG(\phi)\psi)\| \le M \|\psi\|$ where $M > 0$ is a constant depending on $\Omega$, $J$ and $S$. Hence $DG(\phi) \in \BND(X,Y)$. 
  \par
  Next we verify that $DG(\phi)$ is indeed the Fr\'echet derivative of $G$ at $\phi$. Introduce the shorthand notation $\phi^{\tau}(\rr,\rr') \DEF \phi(-\tau(\rr,\rr'),\rr')$. For $\eta \in X$ and $\rr \in \BOmega$,
  \begin{align*}
    G(\phi &+ \eta)(\rr) - G(\phi)(\rr) - [DG(\phi)\eta](\rr)\\
    &= \int_{\BOmega}{J(\rr,\rr')\left[S(\phi^{\tau}(\rr,\rr') + \eta^{\tau}(\rr,\rr')) - S(\phi^{\tau}(\rr,\rr')) - S'(\phi^{\tau}(\rr,\rr'))\eta^{\tau}(\rr,\rr')\right]\,d\rr'}
  \end{align*}
  Consider the integrand for fixed $\rr'$. It follows from the Mean Value Theorem that there exists $c = c(\phi,\eta,\rr,\rr') \in (0,1)$ such that 
  \begin{displaymath}
    S(\phi^{\tau}(\rr,\rr') + \eta^{\tau}(\rr,\rr')) - S(\phi^{\tau}(\rr,\rr')) = \eta^{\tau}(\rr,\rr') S'(\phi^{\tau}(\rr,\rr') + c \eta^{\tau}(\rr,\rr'))
  \end{displaymath}
  Consequently,
  \begin{align*}
    S(\phi^{\tau}(\rr,\rr') + \eta^{\tau}(\rr,\rr')) &- S(\phi^{\tau}(\rr,\rr')) - S'(\phi^{\tau}(\rr,\rr'))\eta^{\tau}(\rr,\rr')\\
    &= \left[S'(\phi^{\tau}(\rr,\rr') + c \eta^{\tau}(\rr,\rr')) - S'(\phi^{\tau}(\rr,\rr'))\right]\eta^{\tau}(\rr,\rr')
  \end{align*}
  Since $S'$ is uniformly continuous on compact intervals and $|\phi^{\tau}(\rr,\rr')| \le \|\phi\|$ and $|\eta^{\tau}(\rr,\rr')| \le \|\eta\|$ for all $\rr, \rr' \in \BOmega$, it follows that for every $\EPS > 0$ there exists $\delta > 0$ such that
  \begin{displaymath}
    |S'(\phi^{\tau}(\rr,\rr') + c \eta^{\tau}(\rr,\rr')) - S'(\phi^{\tau}(\rr,\rr'))| \le \EPS \qquad \forall\,\rr,\rr' \in \BOmega
  \end{displaymath}
  provided $\|\eta\| \le \delta$. Therefore, if $\|\eta\| \le \delta$ then
  \begin{displaymath}
    \|G(\phi + \eta) - G(\phi) - DG(\phi)\eta\| \le M \EPS \|\eta\|
  \end{displaymath}
  where $M > 0$ depends on $\Omega$ and $J$. This establishes differentiability. 
\end{proof}
\begin{proposition}
  \label{prop:setting:1}
  The operator $G$ defined by \eqref{eq:neuralfield:15} is in $\CONTD{\infty}(X,Y)$. For $k = 1,2,\ldots$ its $k$th Fr\'echet derivative $D^kG(\phi) \in \BND_k(X,Y)$ in the point $\phi \in X$ is given by
  \begin{displaymath}
    (D^kG(\phi)(\psi_1,\ldots,\psi_k))(\rr) = \int_{\BOmega}{J(\rr,\rr')S^{(k)}(\phi(-\tau(\rr,\rr'),\rr'))\prod_{i=1}^k{\psi_i(-\tau(\rr,\rr'),\rr')}\,d\rr'}
  \end{displaymath}
  for $\psi_1,\ldots,\psi_k \in X$ and $\rr \in \BOmega$.
\end{proposition}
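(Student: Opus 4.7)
My plan is induction on $k$, the base case $k=1$ being Lemma \ref{lem:neuralfield:9}. For each $k \ge 1$ I define the candidate $G_k(\phi)$ by the right-hand side of the formula in the statement, and show: (a) $G_k(\phi) \in \BND_k(X,Y)$; (b) $G_{k-1}$ is Fr\'echet differentiable at $\phi$ with derivative the $k$-linear operator $G_k(\phi)$; and (c) $\phi \mapsto G_k(\phi)$ is continuous into $\BND_k(X,Y)$. Together (b) and (c) upgrade $G$ from $\CONTD{k-1}$ to $\CONTD{k}$, closing the induction.

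Step (a) is a straightforward adaptation of Lemma \ref{lem:setting:1}: the pointwise bound \eqref{eq:setting:1}, together with boundedness of $J$ by {\HJ} and of $S^{(k)}$ by {\HS}, yields $\|G_k(\phi)(\psi_1,\ldots,\psi_k)\| \le |\BOmega|\, \sup_{\BOmega \times \BOmega} J \cdot \sup_{\RR} |S^{(k)}| \cdot \prod_i \|\psi_i\|$, while continuity of the integral in $\rr$ is checked by replaying the two-term $\EPS$-$\delta$ argument of Lemma \ref{lem:setting:1} with $S^{(k)}$ in place of $S$ and $\prod_i \psi_i^\tau$ pulled out of the continuity estimate as an extra bounded factor.

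The core step is (b), which mirrors the proof of Lemma \ref{lem:neuralfield:9}. With the shorthand $\phi^\tau(\rr,\rr') \DEF \phi(-\tau(\rr,\rr'),\rr')$, the residual
\begin{displaymath}
\bigl[G_{k-1}(\phi+\eta) - G_{k-1}(\phi) - G_k(\phi)(\eta,\cdot)\bigr](\psi_1,\ldots,\psi_{k-1})(\rr)
\end{displaymath}
equals the integral over $\BOmega$ of $J(\rr,\rr')\bigl[S^{(k-1)}(\phi^\tau + \eta^\tau) - S^{(k-1)}(\phi^\tau) - S^{(k)}(\phi^\tau)\eta^\tau\bigr]\prod_{i=1}^{k-1}\psi_i^\tau$. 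The Mean Value Theorem applied to $S^{(k-1)}$ produces $c = c(\phi,\eta,\rr,\rr') \in (0,1)$ so that the square-bracketed quantity becomes $[S^{(k)}(\phi^\tau + c\eta^\tau) - S^{(k)}(\phi^\tau)]\eta^\tau$, exactly as in the $k=1$ case. Uniform continuity of $S^{(k)}$ on any compact interval containing the range of $\phi^\tau$, combined with $|\psi_i^\tau| \le \|\psi_i\|$ and $|\eta^\tau| \le \|\eta\|$, then delivers for each $\EPS > 0$ a $\delta > 0$ with
\begin{displaymath}
\|G_{k-1}(\phi+\eta) - G_{k-1}(\phi) - G_k(\phi)(\eta,\cdot)\|_{\BND_{k-1}(X,Y)} \le |\BOmega| \sup_{\BOmega \times \BOmega} J \cdot \EPS \cdot \|\eta\|
\end{displaymath}
whenever $\|\eta\| \le \delta$, which is precisely the Fr\'echet derivative claim.

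Step (c) follows from an entirely analogous estimate for $G_k(\phi) - G_k(\bar\phi)$, again using uniform continuity of $S^{(k)}$ on a compact interval containing the ranges of $\phi^\tau$ and $\bar\phi^\tau$ for $\bar\phi$ in a bounded neighbourhood of $\phi$. The only mild obstacle I anticipate is bookkeeping: verifying that the pointwise-in-$\rr$ estimate above upgrades cleanly to the operator norm on $\BND_{k-1}(X,Y)$. Since the $\psi_i^\tau$ factor out of the bracket and contribute exactly $\prod_i \|\psi_i\|$ to the overall bound, the inductive step is structurally identical to the $k=1$ case of Lemma \ref{lem:neuralfield:9} and no new analytic difficulty arises.
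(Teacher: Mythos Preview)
Your proposal is correct and follows essentially the same route as the paper: induction on $k$ with the base case from Lemma~\ref{lem:neuralfield:9}, boundedness of the candidate $k$-linear operator via the pointwise estimate \eqref{eq:setting:1}, and the Mean Value Theorem plus uniform continuity of $S^{(k)}$ on compacta to control the remainder. The only (harmless) difference is that you explicitly verify continuity of $\phi \mapsto G_k(\phi)$ in step~(c), whereas the paper omits this: since step~(b) already shows $G_k = D^kG$ is everywhere differentiable (with derivative $G_{k+1}$), continuity of $G_k$ follows automatically and step~(c) is redundant.
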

\begin{proof}
  For $k = 1$ the statement reduces to Lemma \ref{lem:neuralfield:9}. Fix $k \ge 2$. We need to check that $D^{k-1}G : X \to \BND_{k-1}(X,Y)$ has Fr\'echet derivative $D^kG(\phi) \in \BND_k(X,Y)$ in the point $\phi \in X$. Again we remark that $D^kG(\phi)(\psi_1,\ldots,\psi_k) \in Y$ but we omit the proof. We begin by observing that $D^kG(\phi) \in \BND_k(X,Y)$. Indeed, by \eqref{eq:setting:1} we have 
  \begin{displaymath}
    \|D^kG(\phi)(\psi_1,\ldots,\psi_k)\| \le M \|\psi_1\| \cdot \ldots \cdot \|\psi_k\|
  \end{displaymath}
  for all $\psi_1,\ldots,\psi_k \in X$, where $M > 0$ is a constant depending on $\Omega$, $J$ and $S$.
  \par
  We conclude by verifying that $D^kG(\phi)$ is indeed the derivative of $D^{k-1}G$ at $\phi \in X$. Using the same shorthand notation as in the proof of Lemma \ref{lem:neuralfield:9}, we consider, for $\eta \in X$, $\psi = (\psi_1,\ldots,\psi_{k-1}) \in X^{k-1}$ with $\|\psi_i\| \le 1$ for all $i = 1,\ldots,k-1$ and $\rr \in \Omega$, 
  \begin{align*}
    (D^{k-1}G(\phi + \eta)\psi)(\rr) - (D^{k-1}G(\phi)\psi)(\rr) &- (D^kG(\phi)(\eta,\psi))(\rr)\\
    &= \int_{\BOmega}{J(\rr,\rr')R(\rr,\rr')\prod_{i = 1}^{k-1}{\psi_i^{\tau}(\rr,\rr')}\,d\rr'}
  \end{align*}
  where
  \begin{displaymath}
    R(\rr,\rr') \DEF S^{(k-1)}(\phi^{\tau}(\rr,\rr') + \eta^{\tau}(\rr,\rr')) - S^{(k-1)}(\phi^{\tau}(\rr,\rr')) - S^{(k)}(\phi^{\tau}(\rr,\rr'))\eta^{\tau}(\rr,\rr')
  \end{displaymath}
  Exactly as in the proof of Lemma \ref{lem:neuralfield:9} we may use the Mean Value Theorem and uniform continuity of $S^{(k)}$ on compact intervals to conclude that for each $\EPS > 0$ there exists $\delta > 0$ such that $\|\eta\| \le \delta$ implies $|R(\rr,\rr')| \le \EPS \|\eta\|$ for all $\rr,\rr' \in \BOmega$. Hence we have
  \begin{align*}
    \|D^{k-1}G(\phi + \eta)\psi - D^{k-1}G(\phi)\psi - D^kG(\phi)(\eta,\psi)\| \le M \EPS \|\eta\|
  \end{align*}
  provided $\|\eta\| \le \delta$, where $M > 0$ depends on $\Omega$ and $J$. 
\end{proof}

\subsection{Choosing the spatial state space}\label{sec:setting:y}
We believe that our choice for $Y = \CONT(\BOmega)$ made in \S \ref{sec:setting:basics} deserves some comments. In \cite{Faye2010,Veltz2011} the authors instead elect to work with the Hilbert space $Y = \LP{2}(\Omega)$. In our opinion this choice suffers from at least three mathematical complications.

\subsubsection*{The definition of $G$}\label{sec:setting:pointev}
It is no longer clear that $G$ is well-defined by \eqref{eq:neuralfield:15}. Namely, apart from square integrability one also needs to verify the following. If $\phi, \BAR{\phi} \in X$ and for all $t \in [-h,0]$ one has
  \begin{equation}
    \label{eq:setting:2}
    \phi(t,\rr') = \BAR{\phi}(t,\rr') \qquad \text{a.e. } \rr' \in \Omega
  \end{equation}
  (where a.e. stand for \emph{almost everywhere}, i.e. $\phi(t,\cdot)$ and $\BAR{\phi}(t,\cdot)$ represent the same element in $\LP{2}(\Omega))$ then this should imply that for almost all $\rr \in \Omega$ one has
  \begin{equation}
    \label{eq:setting:3}
    \phi(-\tau(\rr,\rr'),\rr') = \BAR{\phi}(-\tau(\rr,\rr'),\rr') \qquad \text{a.e. } \rr' \in \Omega
  \end{equation}
  There are bounded $\tau \in \CONT(\Omega \times \Omega)$ for which this implication fails. For example, let $\Omega = (0,1)$, write $x = \rr$ and $r = \rr'$ and let $\psi$ and $\BAR{\psi}$ be representatives of the same element in $\LP{2}(\RR)$ that differ in zero. If we define 
  \begin{displaymath}
    \phi(t,r) \DEF \psi(r + t), \qquad \BAR{\phi}(t,r) \DEF \BAR{\psi}(r + t) \qquad \forall\,r \in \Omega
  \end{displaymath}
  then $\phi, \BAR{\phi} \in X$ and \eqref{eq:setting:2} holds for all $t \in [-1,0]$. However, if $\tau(x,r) = r$, independent of $x \in \Omega$, then $\phi(-\tau(x,r),r) = \psi(0)$ and $\BAR{\phi}(-\tau(x,r),r) = \BAR{\psi}(0)$ which shows that for all $x = \rr \in \Omega$ we have \emph{in}equality in \eqref{eq:setting:3}. Clearly, this choice of $\tau$ is very unrealistic, but it does indicate a problem that needs to be addressed when one works with spaces of equivalence classes of measurable functions. It is not obvious that a more realistic choice such as $\tau(x,r) \DEF |x - r|$ does \emph{not} exhibit the above phenomenon.

\subsubsection*{First order Fr\'echet differentiability}
Even if we assume that the above problem can be solved satisfactorily by imposing additional (physiologically plausible) conditions on $\tau$, there remains the question of whether the first order Fr\'echet derivative of $G$ appearing in Lemma \ref{lem:neuralfield:9} maps $X$ into $Y$ when $Y = \LP{2}(\Omega)$. For the sake of simplicity, let us assume that $J(\rr,\rr') \equiv 1$ and $\phi \equiv 0$. Then the mapping
\begin{equation}
  \label{eq:setting:9}
  \Omega \ni \rr \mapsto \int_{\Omega}{\psi(-\tau(\rr,\rr'),\rr')\,d\rr'} \in \RR
\end{equation}
should be in $\LP{2}(\Omega)$ for all $\psi \in X$. This is not obvious. An attempt to prove this statement is contained in the proof following \cite[Lemma 3.1.1]{Faye2010}. The authors write, for $\rr \in \Omega$,
\begin{align*}
  \left(\int_{\Omega}{\psi(-\tau(\rr,\rr'),\rr')\,d\rr'} \right)^2 \le \int_{\Omega}{\psi^2(-\tau(\rr,\rr'),\rr')\,d\rr'} \le \sup_{t \in [-h,0]}{\int_{\Omega}{\psi^2(t,\rr')\,d\rr'}}
\end{align*}
The first estimate is by the Cauchy-Schwarz inequality. As it stands, the second estimate only seems to be valid under certain extra conditions on $\psi$ and / or $\tau$, since $\tau$ depends on the integration variable.

\subsubsection*{Higher order Fr\'echet differentiability}
In verifying second order differentiability we encounter problems similar to those pointed out above. Another complication appears in conjunction with derivatives of order three and higher. For instance, consider $k = 3$ in Proposition \ref{prop:setting:1}. Let $\Omega = (0,1)$ and write $x$ and $y$ for $\rr$ and $\rr'$. Define $\psi \in X$ by $\psi(t,r) \DEF r^{-\frac{1}{3}}$ for all $t \in [-h,0]$ and $r \in \Omega$. Then clearly $\psi \in X$ but the integral
\begin{displaymath}
  \int_{\Omega}{\psi^3(-\tau(x,r),r)\,dr}
\end{displaymath}
diverges for all $x \in \Omega$ so $D^3G(0)$ does not map $X$ into $Y$.

\subsubsection*{Which space to choose instead?}
It appears that the choice $Y = \LP{p}(\Omega)$ with $1 \le p < 1$ is not very fortunate. Moreover, from a biological point of view it is rather unclear why the membrane potentials should be merely $p$-integrable on $\Omega$ and not necessarily bounded. 
\par
Thus we are led to consider alternatives. Within the class of Hilbert spaces the Sobolev space $H^{k}(\Omega)$ comes to mind. By standard Sobolev embedding theory each element of $H^k(\Omega)$ has a (unique) continuous representative, provided $k \in \NN$ is sufficiently large (depending on the dimension of $\Omega$). Moreover, $H^k(\Omega)$ is a Banach algebra under mild conditions on $\Omega$ \cite[Thm. 5.23]{Adams1975}. However, for arbitrary $\phi \in X$ the mapping \eqref{eq:setting:9} cannot be expected to possess $k$ weak derivatives in $\LP{2}(\Omega)$.
\par
Other possibilities are $Y = \LP{\infty}(\Omega)$, $Y = B(\BOmega)$ and $Y = \CONT(\BOmega)$, where $B(\BOmega)$ is the Banach space of everywhere bounded, measurable functions on $\BOmega$. Note that the first two spaces differ in the sense that $\LP{\infty}(\Omega)$ consists of \emph{equivalence classes} of \emph{essentially} bounded, measurable functions on $\Omega$. The first choice satisfies all our needs, but it may potentially suffer from the problem indicated in \S \ref{sec:setting:pointev}. The second choice takes care of all the above technical complications but also introduces new ones. Most notably, the Arzel\'a-Ascoli theorem, used in  \S \ref{sec:spectrum:generalities}, does not hold in $B(\BOmega)$. The choice $Y = \CONT(\BOmega)$ seems to be fitting both from a modelling as well as from a technical perspective.

%%% Local Variables: 
%%% mode: latex
%%% TeX-master: "neuralfield"
%%% End: 

\section{Resolvents and spectra}\label{sec:spectrum}
Let $DG(\hat{\phi}) \in \BND(X,Y)$ be the Fr\'echet derivative of $G$ at the {\it steady state vector} $\hat{\phi} \in X$, i.e. $\hat{\phi}$ is independent of time (but possibly dependent on space) and 
\begin{equation}
  \label{eq:spectrum:21}
  -\alpha\hat{\phi} + G(\hat{\phi}) = 0 
\end{equation}
by \eqref{eq:neuralfield:10}. Using Lemma \ref{lem:neuralfield:9} we obtain
\begin{equation}
  \label{eq:spectrum:3}
  (DG(\hat{\phi})\phi)(\rr) = \int_{\BOmega}{J_0(\rr,\rr')\phi(-\tau(\rr,\rr'),\rr')\,d\rr'} \qquad \forall\,\phi \in X,\,\forall\,\rr \in \BOmega
\end{equation}
where 
\begin{equation}
  \label{eq:spectrum:10}
  J_0(\rr,\rr') \DEF J(\rr,\rr')S'(\hat{\phi}(-\tau(\rr,\rr'),\rr'))
\end{equation}
In this section we are interested in the spectral properties of the linear problem
\begin{equation}
  \label{eq:spectrum:4}
  \left\{
    \begin{aligned}
      \dot{V}(t) &= -\alpha V(t) + DG(\hat{\phi})V_t && t \ge 0\\
      V(t) &= \phi(t) && t \in [-h,0]
    \end{aligned}
  \right.
\end{equation}
with $\alpha > 0$, which is a special case of the problem  
\begin{equation}
  \label{eq:spectrum:1}
  \left\{
    \begin{aligned}
      \dot{x}(t) &= -\alpha x(t) + Lx_t && t \ge 0\\
      x(t) &= \phi(t) && t \in [-h,0]
    \end{aligned}
  \right.
\end{equation}
where $Y$ is a complex Banach space, $X = \CONT([-h,0];Y)$, $L \in \BND(X,Y)$ and $\alpha \in \CC$. 
\begin{remark}
  \label{rem:spectrum:1}
  For the spectral analysis of this section it is necessary to work in Banach spaces over $\CC$. So, whenever we discuss the spectral properties of \eqref{eq:spectrum:4}, we implicitly assume that the spaces $X$ and $Y$ and the operators acting between them have been complexified. In fact, one should also complexify the sun-star duality structure introduced in \S \ref{sec:setting:sunstar}. This task is not entirely trivial and rather tedious. Fortunately it has been carried out in \cite[\S III.7]{Diekmann1995}. \hfill \QEDEX
\end{remark}
In \S \ref{sec:spectrum:generalities} we make several standard observations on the structure of the spectrum of the generator of the strongly continuous semigroup solving \eqref{eq:spectrum:1}. Familiarity with the basics of spectral theory and semigroup theory is presumed, for which we recommend \cite[Ch.V]{Taylor1958} and \cite{Engel2000}. We would also like to mention the nice application-inspired paper \cite{Arino2006} for a detailed treatment of abstract linear DDE with bounded right-hand sides, partially in the context of Hale's \cite{Hale1977} formal duality approach. Some of our statements are similar to those found in \cite[\S 3.1]{Veltz2011}, but our approach (as well as the choice of state space, see the remarks in \S \ref{sec:setting:y}) is sometimes different. For instance, following \cite[Def. II.4.22]{Diekmann1995} and \cite[\S 4.1]{Arino2006}  we believe that the employment of Browder's (instead of Kato's) definition of the essential spectrum leads to somewhat simpler arguments.  
\par
In \S \ref{sec:spectrum:computation} we specialise to \eqref{eq:spectrum:4} and choose $Y = \CONT(\BOmega)$ and $L = DG(\hat{\phi})$. It is shown how to obtain explicit representations of resolvents and eigenvectors for a particular (but still rather general) choice of connectivity function $J$.

\subsection{Spectral structure}\label{sec:spectrum:generalities}
We recall from Theorem \ref{thm:setting:2} in \S \ref{sec:setting:sunstar} that the strongly continuous semigroup $T$ on $X$ corresponding to the global solution of \eqref{eq:spectrum:1} is generated by $A : D(A) \subset X \to X$ where
\begin{equation}
  \label{eq:spectrum:7}
  D(A) = \{\phi \in X\,:\,\phi' \in X \text{ and } \phi'(0) = -\alpha\phi(0) + L\phi\}, \qquad  A\phi = \phi'
\end{equation}
At this point we establish some standard notation. Let $S : D(S) \subset U \to U$ be a closed linear operator on a complex Banach space $U$. We denote by $\rho(S) \subset \CC$, $\sigma(A)$ and $\PS(A)$ the resolvent set, the spectrum and the point spectrum of $S$, respectively. When $z \in \rho(S)$ we write\footnote{It is customary to suppress the identity operator and write $\lambda - S$ instead of $\lambda I - S$.} $R(z,S) \DEF (z - S)^{-1}$ for the resolvent of $S$ at $z$. For any $z \in \CC$ we let $\RG(z - S)$ and $\KER(z - S)$ denote the range and the nullspace of $z - S$.
\par
The results in this subsection are rather easy consequences of the following generalisation of \cite[Thm. IV.3.1 and Cor. IV.3.3]{Diekmann1995}. It will turn out to be very convenient to employ tensor product $\otimes$ notation as introduced in \cite[p. 520]{Engel2000}. We recall the definition from there for the reader's convenience.
\begin{definition}
  \label{def:spectrum:1}
  Let $U,V$ be complex Banach spaces and let $\mathcal{F}(I,V)$ be a complex Banach space of $V$-valued functions defined on an interval $I \subseteq \RR$. Let $B \in \BND(U,V)$ and $g: I \to \CC$. If the map $g \otimes v: I \ni s \mapsto g(s) v \in V$ is in $\mathcal{F}(I,V)$ for all $v \in V$, then we define $g \otimes B: U \to \mathcal{F}(I,V)$ by
  \begin{displaymath}
    [(g \otimes B)u](s) \DEF (g \otimes Bu)(s) = g(s) Bu
  \end{displaymath}
for all $u \in U$ and $s \in I$. \hfill \QEDEX
\end{definition}
We also introduce some auxiliary operators.  For each $z \in \CC$ and $\theta \in [-h,0]$ we set $\EPS_{z}(\theta) \DEF e^{z \theta}$. With $L$ as in \eqref{eq:spectrum:1} we define
\begin{alignat}{4}
  L_{z} &\in \BND(Y),  &\qquad L_{z}f &\DEF L(\EPS_{z} \otimes f)\label{eq:spectrum:8}\\
  H_{z} &\in \BND(X),  &\qquad (H_{z}\phi)(\theta) &\DEF \int_{\theta}^0{e^{z(\theta - s)}\phi(s)\,ds}\nonumber\\
  S_{z} &\in \BND(X,Y), &\quad S_{z}\phi &\DEF \phi(0) + LH_{z}\phi\nonumber
\end{alignat}
for all $f \in Y$, $\phi \in X$ and $\theta \in [-h,0]$.
\begin{proposition}[{\cite[Prop. VI.6.7]{Engel2000}}]
  \label{prop:spectrum:1}
  For every $z \in \CC$ define $\Delta(z) \in \BND(Y)$ by
  \begin{equation}
    \label{eq:spectrum:2}
    \Delta(z) \DEF z + \alpha - L_{z}
  \end{equation}
  Then $\phi \in \RG(z - A)$ if and only if 
  \begin{equation}
    \label{eq:spectrum:5}
    \Delta(z)f = S_{z}\phi
  \end{equation}
  has a solution $f \in Y$ and, moreover, $z \in \rho(A)$ if and only if $f$ is also unique. If such is the case, then
  \begin{equation}
    \label{eq:spectrum:6}
    R(z,A)\phi = (\EPS_{z} \otimes \Delta(z)^{-1})S_{z}\phi + H_{z}\phi
  \end{equation}
  Furthermore, $S_{z}$ is surjective for every $z \in \CC$, so $\lambda \in \sigma(A)$ if and only if $0 \in \sigma(\Delta(\lambda))$. Finally, $\psi \in D(A)$ is an eigenvector corresponding to $\lambda$ if and only if $\psi = \EPS_{\lambda} \otimes q$ where $q \in Y$ satisfies $\Delta(\lambda)q = 0$.
\end{proposition}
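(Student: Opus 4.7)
\bigskip
\noindent
\textbf{Proof proposal.}
The plan is to rewrite the resolvent equation $(z-A)\psi = \phi$ as an ODE with a non-local boundary condition and to reduce the problem to an equation on the smaller space $Y$.

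First, I would unfold the definition of $A$ in \eqref{eq:spectrum:7}. Since $A\psi = \psi'$ on $D(A)$, the equation $(z-A)\psi = \phi$ is equivalent to the first-order linear ODE $\psi'(\theta) = z\psi(\theta) - \phi(\theta)$ on $[-h,0]$, together with the boundary condition $\psi'(0) = -\alpha\psi(0) + L\psi$. Variation of constants with data $f \DEF \psi(0) \in Y$ gives
\begin{equation*}
  \psi(\theta) = e^{z\theta}f + \int_\theta^0 e^{z(\theta - s)}\phi(s)\,ds = (\EPS_z \otimes f)(\theta) + (H_z\phi)(\theta),
\end{equation*}
so any candidate solution lies in the family parametrised by $f \in Y$. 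Substituting this form into the boundary condition, using $\psi'(0) = zf - \phi(0)$ and $L\psi = L_z f + LH_z\phi$, and rearranging, I obtain $(z + \alpha - L_z)f = \phi(0) + LH_z\phi$, i.e., $\Delta(z)f = S_z\phi$. This yields the ``iff'' in the first claim, the resolvent formula \eqref{eq:spectrum:6} (after noting that the inverse in \eqref{eq:spectrum:6} is applied only when $f$ is unique, that is, when $\Delta(z)$ is injective), and the characterization $z \in \rho(A)$ iff for every $\phi \in X$ a unique $f$ exists — which, modulo the surjectivity of $S_z$, is the same as $0 \in \rho(\Delta(z))$.

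The main obstacle, and what takes real work, is the surjectivity of $S_z$ for every $z \in \CC$. I would establish it by a localisation argument. For any $y \in Y$ and any continuous $f_\epsilon : [-h,0] \to \CC$ with $f_\epsilon(0) = 1$, $\SUPN{f_\epsilon} \le 1$, and $\SUPP{f_\epsilon} \subseteq [-\epsilon,0]$, set $\phi_\epsilon \DEF f_\epsilon \otimes y \in X$, so $\phi_\epsilon(0) = y$. A direct estimate of $g_\epsilon(\theta) \DEF (H_z \phi_\epsilon)(\theta)/\,\|y\|$ shows $\SUPN{g_\epsilon} \le \epsilon\, e^{|z|h}$, hence $\|LH_z \phi_\epsilon\| \le \|L\|\,e^{|z|h}\,\epsilon \,\|y\|$. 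Writing $\phi \mapsto S_z \phi$ restricted to the one-parameter family $y \mapsto \phi_\epsilon$ yields an operator $I + K_\epsilon \in \BND(Y)$ with $\|K_\epsilon\|$ arbitrarily small for $\epsilon$ small; a Neumann series then inverts $I + K_\epsilon$ and produces a pre-image of any prescribed $y^\ast \in Y$ under $S_z$.

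Once surjectivity of $S_z$ is in hand, the equivalence $\lambda \in \sigma(A) \iff 0 \in \sigma(\Delta(\lambda))$ is immediate from the solvability criterion just established. Finally, for the eigenvector characterization I would simply specialise $\phi = 0$: the ODE $(\lambda - A)\psi = 0$ forces $\psi = \EPS_\lambda \otimes q$ with $q = \psi(0) \in Y$, and the boundary condition $\lambda q = -\alpha q + L_\lambda q$ is precisely $\Delta(\lambda)q = 0$. The converse is a direct verification that any such $\EPS_\lambda \otimes q$ lies in $D(A)$ and satisfies $A\psi = \lambda\psi$.
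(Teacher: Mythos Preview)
Your argument is correct. The paper does not supply its own proof of this proposition; it is quoted verbatim from \cite[Prop.~VI.6.7]{Engel2000}, so there is nothing to compare against beyond the standard reference. Your reduction via variation of constants to the equation $\Delta(z)f = S_z\phi$ is exactly the classical route, and the eigenvector characterisation via $\phi = 0$ is the expected specialisation.

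The one place where you add something beyond a mechanical unfolding is the surjectivity of $S_z$, and your localisation argument is clean: restricting $S_z$ to the slice $\{f_\epsilon \otimes y : y \in Y\} \simeq Y$ gives $I + K_\epsilon$ with $\|K_\epsilon\| \le \|L\|\,e^{|z|h}\,\epsilon$, and a Neumann series finishes it. This is perhaps slightly more explicit than the treatment in Engel--Nagel, but the idea is the same. One minor stylistic point: when you write that the inverse in \eqref{eq:spectrum:6} is applied ``when $\Delta(z)$ is injective'', you should say \emph{bijective}; you do recover this a few lines later via the surjectivity of $S_z$, but it would read more cleanly stated upfront.
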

\begin{corollary}
  \label{cor:spectrum:4}
  Let $z \neq -\alpha$. If $L_{z}$ is compact, then $\RG(z - A)$ is closed.
\end{corollary}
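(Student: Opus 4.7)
The plan is to reduce the closedness of $\RG(z - A)$ to the closedness of $\RG(\Delta(z))$ in $Y$, using the characterisation in Proposition \ref{prop:spectrum:1}, and then to obtain the latter via standard Riesz--Schauder theory for compact perturbations of a nonzero scalar.

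First I would observe that, by Proposition \ref{prop:spectrum:1},
\begin{displaymath}
  \phi \in \RG(z - A) \iff \Delta(z) f = S_{z}\phi \text{ has a solution } f \in Y \iff S_{z}\phi \in \RG(\Delta(z)).
\end{displaymath}
In other words, $\RG(z - A) = S_{z}^{-1}(\RG(\Delta(z)))$. Since $S_{z} \in \BND(X,Y)$ is continuous, the preimage of any closed subspace of $Y$ under $S_{z}$ is closed in $X$. So it suffices to prove that $\RG(\Delta(z))$ is a closed subspace of $Y$.

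Next I would exploit the hypothesis $z \neq -\alpha$, which allows us to factor
\begin{displaymath}
  \Delta(z) = (z + \alpha) - L_{z} = (z + \alpha)\bigl(I - (z+\alpha)^{-1} L_{z}\bigr).
\end{displaymath}
Because $L_{z}$ is compact by assumption, $(z+\alpha)^{-1} L_{z}$ is a compact operator on $Y$. By the Riesz--Schauder theorem (or equivalently the Fredholm alternative), the operator $I - (z+\alpha)^{-1} L_{z}$ is Fredholm of index zero on $Y$, and in particular its range is closed. Multiplying by the nonzero scalar $z + \alpha$ leaves the range unchanged, so $\RG(\Delta(z))$ is closed. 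Combining this with the preimage argument above yields that $\RG(z - A)$ is closed.

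The only step requiring any care is the transfer of closedness from $\RG(\Delta(z)) \subseteq Y$ to $\RG(z - A) \subseteq X$; this is precisely where the surjectivity of $S_{z}$ asserted in Proposition \ref{prop:spectrum:1} (or, more modestly, its boundedness) is used. I expect no serious obstacle: the key content is entirely captured by the factorisation of $\Delta(z)$ and the compactness hypothesis, with the rest being a bookkeeping argument via the resolvent formula \eqref{eq:spectrum:6}.
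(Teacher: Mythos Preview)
Your proof is correct and follows essentially the same approach as the paper: both use Proposition~\ref{prop:spectrum:1} to reduce to showing that $\RG(\Delta(z))$ is closed, invoke Riesz--Schauder theory for the compact perturbation $L_z$ of the nonzero scalar $z+\alpha$, and then transfer closedness back to $\RG(z - A)$ via the continuity of $S_z$. The paper writes out the sequence argument explicitly where you invoke the preimage formulation, but the content is identical; note that only the boundedness of $S_z$ is needed for this step, not its surjectivity.
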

\begin{proof}
  From the part of Proposition \ref{prop:spectrum:1} regarding \eqref{eq:spectrum:5} we have $\phi \in \RG(z - A)$ if and only if $S_{z}\phi \in \RG(\Delta(z))$. From the theory of compact operators \cite[\S 5.5]{Taylor1958} it follows that $\Delta(z) = z + \alpha - L_{z}$ has closed range, since $z + \alpha \neq 0$. Now let $(\phi_n)_{n \in \NN}$ be a sequence in $\RG(z - A)$ converging to some $\phi \in X$. Then the sequence $(S_{z}\phi_n)_{n \in \NN}$ in $\RG(\Delta(z))$ converges to $S_{z}\phi \in \RG(\Delta(z))$, since $\RG(\Delta(z))$ is closed. Hence $\phi \in \RG(z - A)$. 
\end{proof}
\begin{remark}
  \label{rem:spectrum:2}
  For the particular case \eqref{eq:spectrum:4} with $Y = \CONT(\BOmega)$ and $L = DG(\hat{\phi})$, compactness of $L_{z}$ for each $z \in \CC$ follows easily from the Arzel\`a-Ascoli theorem since $L_{z}$ is a Fredholm integral operator with continuous kernel $J_0 e^{-z \tau}$. \hfill \QEDEX
\end{remark}
As $\DIM{Y} = \infty$ the shift semigroup $T_0$ on $X$ is no longer eventually compact. Consequently we need to consider the possibility that $\sigma(A)$ contains points that are \emph{not} isolated eigenvalues of finite type. 
\begin{definition}[{\cite[Def. 11]{Browder1961}}]
  \label{def:spectrum:2}
  The Browder essential spectrum $\ES(S)$ of a closed and densely defined operator $S: D(S) \subset U \to U$ consists of all $\lambda \in \sigma(S)$ for which at least one of the following three conditions holds:
\begin{enumerate}[itemsep=1ex,leftmargin=3em]
\item[(i)] $\lambda$ is an accumulation point of $\sigma(S)$;
\item[(ii)] $\RG(\lambda - S)$ is not closed;
\item[(iii)] $\bigcup_{k \ge 0}{\KER[(\lambda - S)^k]}$ has infinite dimension. \hfill \QEDEX 
\end{enumerate}
\end{definition}
We also recall that if $\lambda$ is in the point spectrum $\PS(S)$, then the closure of the subspace appearing in (iii) is the generalised eigenspace $M(\lambda,S)$ corresponding to $\lambda$. Its dimension $m_{\lambda}$ (which may be $\infty$) is the algebraic multiplicity of $\lambda$. If $m_{\lambda} < \infty$ then $\lambda$ is called an eigenvalue of finite type. If $m_{\lambda} = 1$ then $\lambda$ is called a simple eigenvalue. 
\begin{corollary}
  \label{cor:spectrum:3}
  Suppose $L_{z}$ is compact for all $z \neq -\alpha$. Then $\ES(A) \subseteq \{-\alpha\}$. Moreover, $\sigma(A) \setminus \{-\alpha\}$ consists of poles of $R(\cdot,A)$. The order of $\lambda$ as a pole of $R(\cdot,A)$ equals the order of zero as a pole of $R(\cdot,\Delta(\lambda))$.
\end{corollary}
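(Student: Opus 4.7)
The plan is to transfer spectral information from the operator-valued analytic function $z \mapsto \Delta(z) \in \BND(Y)$ to the generator $A : D(A) \subset X \to X$ via the resolvent formula \eqref{eq:spectrum:6}. For $z \neq -\alpha$ I would factor
\begin{displaymath}
  \Delta(z) = (z+\alpha)\bigl(I - (z+\alpha)^{-1} L_z\bigr),
\end{displaymath}
so that $(z+\alpha)^{-1}L_z$ is compact by assumption and depends analytically on $z$ on each connected component of $\CC \setminus \{-\alpha\}$. The analytic Fredholm theorem then yields one of two alternatives on each such component. The degenerate alternative (non-invertibility everywhere) I would rule out by noting that for $\RE z \ge 0$ one has $\|\EPS_z\|_X = 1$, so $\|L_z\| \le \|L\|$, and hence $\|(z+\alpha)^{-1}L_z\| < 1$ whenever $|z|$ is sufficiently large; the Neumann series then produces an inverse. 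Consequently $z \mapsto \Delta(z)^{-1}$ is meromorphic on $\CC \setminus \{-\alpha\}$, with poles of finite order and with Laurent coefficients of negative index that are finite-rank operators on $Y$.

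Next I would feed this information into the identity $R(z,A) = (\EPS_z \otimes \Delta(z)^{-1})S_z + H_z$ from Proposition \ref{prop:spectrum:1}. Because $z \mapsto \EPS_z$, $z \mapsto S_z$ and $z \mapsto H_z$ are analytic in the appropriate operator topologies, $R(\cdot,A)$ is meromorphic on $\CC \setminus \{-\alpha\}$ with poles precisely at the zeros of $\Delta(z)$, and the Laurent coefficients of negative index of $R(\cdot,A)$ at such a pole are finite-rank operators on $X$ (they factor through the corresponding finite-rank Laurent coefficients of $\Delta(z)^{-1}$). Fix $\lambda \in \sigma(A) \setminus \{-\alpha\}$. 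Then $\lambda$ is an isolated point of $\sigma(A)$, so condition (i) of Definition \ref{def:spectrum:2} fails. Corollary \ref{cor:spectrum:4} ensures that $\RG(\lambda - A)$ is closed, so condition (ii) fails. Finally, the spectral projection $P_\lambda$ associated with $\lambda$ is the residue of $R(\cdot,A)$ at $\lambda$, which is finite-rank by the previous observation; consequently the generalised eigenspace $M(\lambda,A) = \RG(P_\lambda)$ is finite-dimensional and condition (iii) fails. Thus $\ES(A) \subseteq \{-\alpha\}$ and every $\lambda \in \sigma(A) \setminus \{-\alpha\}$ is a pole of $R(\cdot,A)$ of finite order.

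The remaining and most delicate assertion is the equality of pole orders. The order of $0$ as a pole of $R(\cdot,\Delta(\lambda))$ is, by Riesz--Schauder theory applied to the isolated eigenvalue $0$ of the compact perturbation $\Delta(\lambda) = (\lambda+\alpha)I - L_\lambda$ of a scalar, precisely the ascent $k$ of $\Delta(\lambda)$ at $0$, equivalently the maximal length of a Jordan chain of $\Delta(\lambda)$ at $0$. The order of $\lambda$ as a pole of $R(\cdot,A)$ is the ascent of $A - \lambda$, equivalently the maximal length of a Jordan chain of $A$ at $\lambda$. To prove these two ascents coincide I would set up an explicit bijection between Jordan chains on the two sides. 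Given a Jordan chain $q_0,\ldots,q_{k-1}$ in $Y$ (in the sense of the analytic family $\Delta$, i.e. involving derivatives of $\Delta$ at $\lambda$), I would take as ansatz
\begin{displaymath}
  \phi_j(\theta) \DEF e^{\lambda \theta} \sum_{i=0}^{j} \frac{\theta^{\,j-i}}{(j-i)!}\, q_i, \qquad j = 0,\ldots,k-1,
\end{displaymath}
and verify using \eqref{eq:spectrum:7} and the Taylor expansion of $\Delta$ at $\lambda$ that $\phi_0,\ldots,\phi_{k-1}$ is a Jordan chain of $A$ at $\lambda$, and that the map $(q_0,\ldots,q_{k-1}) \mapsto (\phi_0,\ldots,\phi_{k-1})$ is bijective onto such chains. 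This is the step I expect to be the main obstacle: the algebraic manipulations are combinatorially delicate because both the boundary condition $\phi_j'(0) = -\alpha\phi_j(0) + L\phi_j$ and the eigenvalue equation $(A - \lambda)\phi_j = \phi_{j-1}$ must be matched term-by-term in the Taylor expansion. However, this computation is a straightforward adaptation of the corresponding finite-dimensional argument in \cite[\S IV.4]{Diekmann1995}, with the reals $\RR^n$ replaced by the Banach space $Y$; the arguments there go through essentially unchanged because they do not rely on $\DIM Y < \infty$.
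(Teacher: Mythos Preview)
Your treatment of the first two assertions via the analytic Fredholm alternative is correct and takes a different route from the paper. The paper does not invoke analytic Fredholm at all; instead, for a fixed $\lambda \in \sigma(A) \setminus \{-\alpha\}$ it notes that $\lambda + \alpha$ is a nonzero spectral value of the compact operator $L_\lambda$, hence a pole of $R(\cdot,L_\lambda)$ of some order $k \ge 1$, and then argues that $\lambda$ is a pole of $R(\cdot,A)$ of that same order $k$, from which (i) and (iii) of Definition~\ref{def:spectrum:2} are ruled out via \cite[Thm.~5.8-A]{Taylor1958}. Your approach gives meromorphy of $R(\cdot,A)$ and finite-rank principal parts in one stroke, which is arguably cleaner.

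There is, however, a genuine gap in your argument for the equality of pole orders. You correctly identify the order of $0$ as a pole of $R(\cdot,\Delta(\lambda))$ with the ascent of the \emph{single operator} $\Delta(\lambda)$, i.e.\ the maximal length of a chain $q_0,q_1,\ldots$ with $\Delta(\lambda)q_j = q_{j-1}$. But the bijection you then propose is between Jordan chains of $A$ at $\lambda$ and Jordan chains of the \emph{analytic operator function} $z \mapsto \Delta(z)$ at $\lambda$ in the Keldysh sense, i.e.\ chains satisfying $\sum_{i=0}^{j}\tfrac{1}{i!}\Delta^{(i)}(\lambda)q_{j-i} = 0$. Your ansatz for $\phi_j$ and the verification via \eqref{eq:spectrum:7} indeed yield precisely the latter correspondence, as in \cite[\S IV.4]{Diekmann1995}. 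These two chain notions are not the same: the Keldysh recursion involves the derivatives $\Delta'(\lambda),\Delta''(\lambda),\ldots$ (hence the derivatives of $z \mapsto L_z$ at $\lambda$), whereas the quantity in the statement depends only on the frozen operator $\Delta(\lambda)$. Your bijection therefore proves that the pole order of $R(\cdot,A)$ equals the maximal Keldysh chain length of $\Delta$ at $\lambda$, and you still owe a separate argument that this equals the ascent of $\Delta(\lambda)$.

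The paper avoids this by a freezing trick: writing $\Delta(z) = (z+\alpha - L_\lambda) - (L_z - L_\lambda)$ and invoking the continuity of $z \mapsto L_z$ at $\lambda$ together with a resolvent perturbation result \cite[Thm.~IV.3.15]{Kato1966}, it replaces $\Delta(z)^{-1}$ by $(z+\alpha-L_\lambda)^{-1}$ up to a lower-order remainder, and substitutes into \eqref{eq:spectrum:6}. Since $z \mapsto (z+\alpha-L_\lambda)^{-1}$ has a pole at $z=\lambda$ of order exactly $k$ (which is, by a change of variable, the order of $0$ as a pole of $R(\cdot,\Delta(\lambda))$), and since $H_z$ is analytic while $S_z$ has nonvanishing zeroth Taylor coefficient at $\lambda$, the pole order of $R(\cdot,A)$ at $\lambda$ is read off directly as $k$ without any Jordan chain bookkeeping.
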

\begin{proof}
  Let $\lambda \in \sigma(A)$ and $\lambda \neq -\alpha$. Corollary \ref{cor:spectrum:4} implies that (ii) in Definition \ref{def:spectrum:2} (with $S = A$ and $U = X$) cannot be true. Since $\lambda + \alpha \neq 0$ is in $\sigma(L_{\lambda})$ and $L_{\lambda}$ is compact, it follows (again from general spectral theory, see e.g. \cite[\S 5.8]{Taylor1958}) that $\lambda + \alpha$ is a pole of $R(\cdot,L_{\lambda})$, say of order $k \ge 1$. If we can prove that $\lambda$ is a pole of order $k$ of $R(\cdot,A)$, then we are done. Indeed, it then follows that in particular $\lambda$ is isolated in $\sigma(A)$, so (i) in Definition \ref{def:spectrum:2} cannot hold. By \cite[Thm.5.8-A]{Taylor1958} the same is true for (iii). 
\par
Let us therefore prove that $\lambda$ is a pole of order $k$ of $R(\cdot,A)$. First we remark that the map 
\begin{equation}
  \label{eq:appendix_laurent:2}
  \CC \ni z \mapsto L_{z} \in \BND(Y)
\end{equation}
is continuous at $\lambda$. The proof of this fact is standard and has been omitted. If $z$ is in $\rho(A)$ then $z + \alpha \in \rho(L_{z})$ and
\begin{displaymath}
  \Delta(z)^{-1} = (z + \alpha - L_{z})^{-1} = \bigl[z + \alpha - (L_{\lambda} + (L_{z} - L_{\lambda})) \bigr]^{-1}
\end{displaymath}
A continuity property of the resolvent \cite[Theorem IV.3.15]{Kato1966} together with the continuity of \eqref{eq:appendix_laurent:2} at $\lambda$ implies that for $z$ sufficiently close to $\lambda$ we have $z + \alpha \in \rho(L_{\lambda})$ and
\begin{displaymath}
  \Delta(z)^{-1} = (z + \alpha - L_{\lambda})^{-1} + o(|\lambda - z|) \qquad \text{as } z \to \lambda
\end{displaymath}
where $o(|\lambda - z|)$ denotes a term that vanishes as $z \to \lambda$. By \eqref{eq:spectrum:6} we see that for $z$ sufficiently close to $\lambda$,
\begin{equation}
  \label{eq:appendix_laurent:3}
  R(z,A) = (\EPS_{z} \otimes (z + \alpha - L_{\lambda})^{-1})S_{z} + H_{z} + o(|\lambda - z|)
\end{equation}
where it was used that $\|S_{z}\|$ remains bounded as $z \to \lambda$, which can easily been seen from \eqref{eq:spectrum:8} (with $\lambda$ replaced by $z$). This already establishes that $\lambda$ is an isolated singularity of $R(\cdot,A)$. To conclude the proof we recall that $\lambda + \alpha$ is a pole of $R(\cdot,L_{\lambda})$ of order $k \ge 1$. Hence $\lambda$ itself is a pole of order $k$ of the mapping 
\begin{displaymath}
  \CC \ni z \mapsto (z + \alpha - L_{\lambda})^{-1} \in \BND(Y)  
\end{displaymath}
The result now follows from \eqref{eq:appendix_laurent:3} since $\CC \ni z \mapsto H_{z} \in \BND(X)$ is analytic in $z = \lambda$ and the zero-order term in the power series expansion of $\CC \ni z \mapsto S_{z} \in \BND(X,Y)$ at $z = \lambda$ does not vanish, as is easily checked.

\end{proof}
Hence, although for the application to \eqref{eq:spectrum:4} that we have in mind essential spectrum exists in the form of the exceptional point $-\alpha < 0$, it is properly contained in the left half-plane and therefore rather harmless. This situation seems to be quite common in DDE arising in population dynamics, see the remark in \cite[p. 321]{Arino2006}. As a pleasant consequence, most of the results in \cite[\S IV.2]{Diekmann1995} have immediate analogues in the present setting. We will limit ourselves to the statement of two such results that are also important for the application of center manifold theory in \S \ref{sec:cm}. 
\begin{lemma}[{\cite[Thm. VI.6.6 and Corollary IV.3.11]{Engel2000}}]
  \label{lem:spectrum:1}
  The semigroup $T$ generated by $A$ is norm continuous for $t > h$. Consequently $\omega_0 = s(A)$, where $\omega_0$ is the growth bound of $T$ and $s(A)$ is the spectral bound of $A$.
\end{lemma}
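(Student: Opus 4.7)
The plan is to apply two general results from \cite{Engel2000}: first \cite[Thm. VI.6.6]{Engel2000} to obtain eventual norm continuity of $T$, and then \cite[Cor. IV.3.11]{Engel2000} to deduce $\omega_0 = s(A)$ from it. The second step is automatic once the first is in place, so all substantive work goes into verifying eventual norm continuity in the present (non sun-reflexive) setting.

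To establish norm continuity on $(h, \infty)$, I would exploit the variation-of-constants representation of Theorem \ref{thm:setting:2}(iii) together with Lemma \ref{lem:setting:5}. The latter implies that the Bochner integrand $\SUNSTAR{T}_0(t-s) \ell DF(\hat\phi) T(s)\phi$ arises from an element of $Y \times \{0\} \subset \SUNSTAR{X}$ at each $s$, and the explicit action of $\SUNSTAR{T}_0$ on this "point component" (transported back to $X$ via $j^{-1}$) yields, for all $t \ge 0$ and $\theta \in [-h,0]$ with $t + \theta \ge 0$, the classical pointwise identity
\begin{equation*}
  (T(t)\phi)(\theta) = \phi(0) + \int_0^{t+\theta} \bigl(-\alpha (T(s)\phi)(0) + L\,T(s)\phi \bigr)\, ds.
\end{equation*}
For $t > h$ and any $\theta \in [-h,0]$ we have $t + \theta > 0$, so the identity applies uniformly in $\theta$. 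Fixing $t_0 > h$ and using the local uniform boundedness $M \DEF \sup_{s \in [0, t_0]} \|T(s)\|_{\BND(X)} < \infty$, a direct estimate for $t, \tilde t \in (h, t_0]$ gives
\begin{equation*}
  \|T(t)\phi - T(\tilde t)\phi\| \le (\alpha + \|L\|)\,M\,|t - \tilde t|\,\|\phi\|,
\end{equation*}
and taking the supremum over $\|\phi\| \le 1$ yields local Lipschitz continuity of $t \mapsto T(t)$ in $\BND(X)$ on $(h, \infty)$.

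With eventual norm continuity established, \cite[Cor. IV.3.11]{Engel2000} directly yields $\omega_0 = s(A)$. The main obstacle, absent in the classical case $Y = \RR^n$, is making the $\SUNSTAR{X}$-valued convolution integral explicit in the present infinite-dimensional setting. This is handled cleanly by Lemma \ref{lem:setting:5}: because the range of $B = \ell \circ DF(\hat\phi)$ sits in $Y \times \{0\}$, the convolution reduces, componentwise in $\rr \in \BOmega$, to the familiar computation from the sun-reflexive theory. It is worth noting that norm continuity on the full half-line $(0, \infty)$ cannot be expected, since $T_0$ itself fails to be norm continuous on $(0, h)$ when $\DIM Y = \infty$; eventual norm continuity at $t > h$ is exactly the statement of the lemma and is precisely what the spectral-bound identity requires.
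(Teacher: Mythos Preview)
The paper gives no proof of this lemma: it is stated as a direct citation of \cite[Thm.~VI.6.6 and Cor.~IV.3.11]{Engel2000}, which already cover the abstract $Y$-valued case, so nothing further is needed. Your proposal is correct but takes a different, more self-contained route: instead of invoking \cite[Thm.~VI.6.6]{Engel2000} as a black box, you reprove eventual norm continuity directly from the pointwise identity $(T(t)\phi)(\theta) = \phi(0) + \int_0^{t+\theta}(-\alpha (T(s)\phi)(0) + L\,T(s)\phi)\,ds$ for $t+\theta \ge 0$, and then estimate the difference $T(t)\phi - T(\tilde t)\phi$ uniformly in $\theta$ once $t,\tilde t > h$.

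Two remarks on your argument. First, the detour through Theorem~\ref{thm:setting:2}(iii) and Lemma~\ref{lem:setting:5} is not needed to obtain the pointwise identity: it follows immediately from Definition~\ref{def:setting:1} applied to the linear problem, since $x \DEF (T(\cdot)\phi)(0) \in \CONTD{1}([0,\infty);Y)$ satisfies $\dot x(s) = -\alpha x(s) + L x_s$ and $(T(t)\phi)(\theta) = x(t+\theta)$. Integrating from $0$ to $t+\theta$ gives the identity at once, without unpacking the $\SUNSTAR{}$-convolution. Second, a small terminological slip: the convolution in \eqref{eq:setting:7} is a {\WSTAR} Riemann integral, not a Bochner integral. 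Neither point affects the validity of your proof; what you gain over the paper's bare citation is an explicit, elementary argument that stays entirely within the machinery already set up in \S\ref{sec:setting}.
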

The above lemma implies that, for the linear problem \eqref{eq:spectrum:4}, the (in)stability of the zero solution may be inferred from the location of the poles of $R(\cdot,A)$ in the complex plane. More precisely, we have the following result, which is a direct analogue of \cite[Thm. IV.2.9]{Diekmann1995}.
\begin{proposition}
  \label{prop:spectrum:2}
  Suppose $\beta > -\alpha$. Let 
  \begin{displaymath}
    \Lambda = \Lambda(\beta) \DEF \{\lambda \in \sigma(A)\,:\,\RE{\lambda} > \beta\}
  \end{displaymath}
  and let $P_{\Lambda} \in \BND(X)$ be the spectral projection associated with $\Lambda$, see \cite[\S 5.7]{Taylor1958}. Then
  \begin{displaymath}
    X = \RG(P_{\Lambda}) \oplus \RG(I - P_{\Lambda})     
  \end{displaymath}
  where the first summand is finite dimensional, the second summand is closed and both summands are positively $T$-invariant. Moreover, there exist $K > 0$ and $\EPS > 0$ such that
  \begin{alignat}{4}
    \|T(t)P_{\Lambda}\| &\le K e^{(\beta + \EPS)t}\|P_{\Lambda}\| &\qquad &\forall\,t \le 0\label{eq:spectrum:20}\\
    \|T(t)(I - P_{\Lambda})\| &\le K e^{(\beta + \EPS)t}\|I - P_{\Lambda}\| &\qquad &\forall\,t \ge 0\nonumber
  \end{alignat}
\end{proposition}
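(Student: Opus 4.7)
The plan is to adapt the proof of \cite[Thm.~IV.2.9]{Diekmann1995} to the present non-sun-reflexive setting, using Corollary \ref{cor:spectrum:3} for the spectral structure outside $\{-\alpha\}$ and combining Proposition \ref{prop:spectrum:1} with Lemma \ref{lem:spectrum:1} for the resolvent estimate and the growth-bound identity.

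\medskip

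First I would establish that $\Lambda$ is finite. Every $\lambda \in \Lambda$ is an isolated eigenvalue of finite type by Corollary \ref{cor:spectrum:3}, and no such point can accumulate inside $\{\RE z > \beta\}$, since an accumulation point would lie in $\ES(A) \subseteq \{-\alpha\}$, contradicting $\beta > -\alpha$. To exclude accumulation at infinity, I would observe that for $\RE z \ge \beta$ the operator $L_z$ defined in \eqref{eq:spectrum:8} satisfies $\|L_z\| \le \|L\|\, e^{\max(0, -\beta)h}$ uniformly in $z$, because $\|\EPS_z \otimes f\|_X \le e^{\max(0, -\RE z) h}\|f\|_Y$. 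Hence for $|z|$ sufficiently large, $|z + \alpha| > \|L_z\|$, and a Neumann series gives invertibility of $\Delta(z) = z + \alpha - L_z$, so $z \in \rho(A)$ by Proposition \ref{prop:spectrum:1}. Consequently $\sigma(A) \cap \{\RE z \ge \beta\}$ is bounded, and combined with the absence of finite accumulation points inside this region, $\Lambda$ is finite.

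\medskip

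With $\Lambda$ finite I would choose a positively oriented smooth Jordan contour $\Gamma \subset \rho(A) \cap \{\RE z > \beta\}$ enclosing $\Lambda$ but no other point of $\sigma(A)$ and define the Riesz projection
\begin{displaymath}
  P_\Lambda \DEF \frac{1}{2\pi i} \int_{\Gamma} R(z, A)\, dz \in \BND(X).
\end{displaymath}
The identity $P_\Lambda^2 = P_\Lambda$ yields $X = \RG(P_\Lambda) \oplus \RG(I - P_\Lambda)$; the second summand equals $\KER(P_\Lambda)$ and is closed by continuity of $P_\Lambda$, while the first summand is the sum of the finitely many generalised eigenspaces $M(\lambda, A)$ for $\lambda \in \Lambda$, each of finite dimension by Corollary \ref{cor:spectrum:3}. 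Since $T(t)$ commutes with $R(z, A)$ for every $z \in \rho(A)$, it commutes with $P_\Lambda$, and both summands are positively $T$-invariant.

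\medskip

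For the exponential estimates I would treat the two summands separately. On the finite-dimensional subspace $\RG(P_\Lambda)$, the restriction $A_\Lambda \DEF A|_{\RG(P_\Lambda)}$ has spectrum $\Lambda$, so $T(t)|_{\RG(P_\Lambda)} = e^{tA_\Lambda}$ extends to all $t \in \RR$ and satisfies \eqref{eq:spectrum:20} for any $\EPS > 0$ with $\beta + \EPS < \min_{\lambda \in \Lambda}\RE \lambda$. On $\RG(I - P_\Lambda)$ the restricted semigroup $T_r(t) \DEF T(t)|_{\RG(I - P_\Lambda)}$ has generator $A_r$ with $\sigma(A_r) = \sigma(A) \setminus \Lambda \subseteq \{\RE z \le \beta\}$, using $-\alpha < \beta$, hence $s(A_r) \le \beta$. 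Since eventual norm continuity is inherited by $T_r$ from $T$, Lemma \ref{lem:spectrum:1} applied to $T_r$ gives $\omega_0(T_r) = s(A_r) \le \beta$, yielding the second estimate for any sufficiently small $\EPS > 0$. I expect finiteness of $\Lambda$ to be the step requiring the most care: it is precisely here that the quantitative resolvent bound derived from Proposition \ref{prop:spectrum:1} is essential, as Corollary \ref{cor:spectrum:3} on its own does not preclude eigenvalues of finite type from accumulating at infinity inside the right half-plane.
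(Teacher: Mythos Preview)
The paper does not actually prove this proposition; it simply states it as ``a direct analogue of \cite[Thm.~IV.2.9]{Diekmann1995}'' and moves on. Your proposal correctly supplies the details that the paper omits, and it follows essentially the strategy of the classical proof the paper invokes: establish finiteness of $\Lambda$ via Corollary~\ref{cor:spectrum:3} together with a uniform bound on $\|L_z\|$ in the half-plane $\{\RE z \ge \beta\}$, construct the Riesz projection, and obtain the exponential estimates from finite-dimensional linear algebra on $\RG(P_\Lambda)$ and from eventual norm continuity on the complement. The argument is sound.

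Two minor remarks. First, when you appeal to Lemma~\ref{lem:spectrum:1} for the restricted semigroup $T_r$, you are really using the general implication ``eventually norm continuous $\Rightarrow$ $\omega_0 = s$'' cited there from \cite{Engel2000}; this is fine, since restriction to a closed $T$-invariant subspace preserves eventual norm continuity. Second, your emphasis on the uniform bound $\|L_z\| \le \|L\|\, e^{\max(0,-\beta)h}$ as the crucial ingredient for ruling out eigenvalues escaping to infinity is exactly right and is precisely the point where the abstract DDE structure (via Proposition~\ref{prop:spectrum:1}) enters beyond what Corollary~\ref{cor:spectrum:3} alone provides.
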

We observe that $T(t)P_{\Lambda}$ is well-defined in \eqref{eq:spectrum:20} for all $t \le 0$, since  $T(t)$ extends uniquely to a \emph{group} on the finite-dimensional range of $P_{\Lambda}$.
\par
The extension of the above decomposition and exponential estimates to $\SUNSTAR{X}$ proceeds exactly as in \cite[p.100 - 101]{Diekmann1995}.

\subsection{Explicit computations}\label{sec:spectrum:computation}
In the remainder of this section we consider a homogeneous neural field with transmission delays due to a finite propagation speed of action potentials as well as a finite, fixed delay $\tau_0 \ge 0$ caused by synaptic processes. Space and time are each rescaled such that $\BOmega = [-1, 1]$ and the propagation speed is $1$. This yields
\begin{equation}
  \label{eq:spectrum:19}
  \tau(x,r) = \tau_0 + |x-r| \qquad \forall\,x,r \in \BOmega
\end{equation}
For the connectivity function we take a linear combination of $N \ge 1$ exponentials,
\begin{equation}
  \label{eq:spectrum:9}
  J(x,r) = \sum_{i=1}^{N} \hat{c}_i e^{-\mu_i |x-r|} \qquad \forall\,x,r \in \BOmega 
\end{equation}
where
\begin{displaymath}
  \hat{c}_i\in\mathbb{C} \text{ with } \hat{c}_i\neq0, \qquad \mu_i\in\mathbb{C} \text{ with } \mu_i \neq \mu_j \text{ for } i\neq j
\end{displaymath}
(As the number $N$ of exponentials remains fixed, we suppress it in our notation.) In addition to {\HS} we also require here that $S(0) = 0$. We study the stability of a spatially homogeneous steady state $\hat{\phi} \equiv 0$ by analysing the spectrum of the linearised system \eqref{eq:spectrum:4}. Following \eqref{eq:spectrum:10} we incorporate $S'(0)$ into the connectivity function,
\begin{displaymath}
  J_0(x,r) = \sum_{i=1}^{N} c_i e^{-\mu_i |x-r|}, \quad c_i = S'(0)\hat{c}_i
\end{displaymath}
In order to avoid overly convoluted notation we henceforth write $J$ instead of $J_0$. Assuming the  form \eqref{eq:spectrum:9}, in the next two subsections we explicitly compute the point spectrum $\PS(A)$ with $A$ as in \eqref{eq:spectrum:7} as well as the resolvent operator $R(\lambda,A)$ for $\lambda \in \rho(A)$. 

\subsection{Characteristic equation}\label{sec:spectrum:chareq}
In this example the operator $\Delta(\lambda)$ introduced in \eqref{eq:spectrum:2} is given by
\begin{equation}
  \label{Integral01}
  (\Delta(\lambda)  q)(x)= 
  (\lambda+\alpha) q(x) - \int_{-1}^1{J(x,r)e^{-\lambda\tau_0}e^{-\lambda|x-r|}q(r)\,dr} 
\end{equation}
for all $\lambda \in \CC$, $q \in Y$ and $x \in \BOmega = [-1,1]$. We let
\begin{equation}
  \label{eq:spectrum:13}
  k_i \DEF \lambda+\mu_i \qquad \forall\,i=1,\hdots,N
\end{equation}
and define for each $i = 1,\ldots,N$ the integral operator $K_i \in \BND(Y)$ by
\begin{displaymath}
  (K_i q)(x) = \int_{-1}^1{e^{-k_i|x-r|}q(r)\,dr}
\end{displaymath}
and set $(Kq)(x) \DEF [(K_1q)(x),\ldots,(K_Nq)(x)] \in \CC^N$. By introducing $c \DEF [c_1,\dots,c_N] \in \CC^N$, $\Delta(\lambda)$ is written as
\begin{equation}
  \label{eq:spectrum:11}
  \Delta(\lambda)q =(\lambda+\alpha)e^{\lambda\tau_0}q - (c\cdot Kq), \qquad (c \cdot Kq)(x) \DEF (c \cdot Kq(x))
\end{equation}
where $(a\cdot b) \DEF \sum_{i=1}^N{a_ib_i}$ is a pairing of two complex vectors $a = [a_1,\ldots,a_n]$ and $b = [b_1,\ldots,b_n]$. We solve the equation $\Delta(\lambda) q=0$ by formulating a linear ODE in terms of $ q$ by repetitive differentiation. For this purpose the next lemma is useful.
\begin{proposition}
  All solutions $q \in Y$ of the equation $\Delta(\lambda) q=0$ are in fact in $\CONTD{\infty}(\BOmega)$.
\end{proposition}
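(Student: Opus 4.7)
The plan is to exploit the one-order smoothing property of the integral operators $K_i$ in a straightforward bootstrap. Let $q \in Y = \CONT(\BOmega)$ solve $\Delta(\lambda) q = 0$ and rewrite \eqref{eq:spectrum:11} as
\[
(\lambda + \alpha) e^{\lambda \tau_0}\, q(x) = \sum_{i=1}^N c_i (K_i q)(x).
\]
First I would split each $(K_i q)(x)$ according to the sign of $x-r$, writing
\[
(K_i q)(x) = e^{-k_i x} \int_{-1}^x e^{k_i r} q(r)\, dr + e^{k_i x} \int_x^1 e^{-k_i r} q(r)\, dr.
\]
An application of the Leibniz rule shows that the boundary contributions from the two pieces cancel, leaving an integrand of the same continuous type; hence if $q \in \CONTD{m}(\BOmega)$ then each $K_i q \in \CONTD{m+1}(\BOmega)$.

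In the generic case $\lambda + \alpha \neq 0$ this immediately closes the argument: $q$ is a scalar multiple of $\sum_i c_i K_i q$, so starting from $q \in \CONTD{0}(\BOmega)$ one inductively obtains $q \in \CONTD{m+1}(\BOmega)$ whenever $q \in \CONTD{m}(\BOmega)$, and $q \in \CONTD{\infty}(\BOmega)$ follows.

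The degenerate case $\lambda = -\alpha$ requires an extra step, since then $q$ cannot be isolated directly from the defining relation. Starting from the one-sided representation above a short computation gives the second-order identity
\[
(K_i q)'' = k_i^2\, K_i q - 2 k_i\, q.
\]
Differentiating $\sum_i c_i K_i q = 0$ twice and substituting this identity yields
\[
2 q \sum_{i=1}^N c_i k_i \;=\; \sum_{i=1}^N c_i k_i^2\, K_i q,
\]
after which the same bootstrap applies whenever $\sum_i c_i k_i \neq 0$. If that scalar also vanishes one continues differentiating; a Vandermonde-type argument, exploiting that the $k_i = \lambda + \mu_i$ are pairwise distinct and the $c_i$ are non-zero, guarantees that after finitely many steps an identity of the form $q\,\gamma = (\text{smoother})$ with $\gamma \neq 0$ must appear. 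The only real obstacle in the proof is the bookkeeping required in this degenerate sub-case; outside of it, the smoothing-plus-bootstrap idea is essentially immediate, which is presumably why the paper flags this as a preparatory statement before deriving the ODE for $q$ by repeated differentiation.
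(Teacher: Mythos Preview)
Your argument in the generic case $\lambda + \alpha \neq 0$ is exactly the paper's: the authors give a one-line proof observing that the range of each $K_i$ lies in $\CONTD{1}(\BOmega)$, after which the bootstrap is implicit. So on the main point you and the paper agree completely.

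Where you go further is in treating the degenerate case $\lambda = -\alpha$, which the paper's proof silently ignores. Your idea of using the identity $(K_i q)'' = k_i^2 K_i q - 2 k_i q$ to extract $q$ after repeated differentiation is sound, and the resulting system is indeed of Vandermonde type in the quantities $k_i^2$. One small caveat: pairwise distinctness of the $k_i$ does not by itself guarantee pairwise distinctness of the $k_i^2$ (this is precisely the condition $\lambda \notin \mathcal{S}$ appearing later in the paper), and one $k_i$ could vanish; so the termination argument needs a word about those sub-sub-cases. But since the paper itself does not even raise the issue, and the subsequent development explicitly restricts to $\lambda \notin \mathcal{S}$ anyway, this is a refinement rather than a defect.
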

\begin{proof}
  The range of $K_i$ is contained in $C^1(\BOmega)$ and therefore any solution of the equation $\Delta(\lambda) q=0$ is an element of $C^\infty(\BOmega)$.
\end{proof}
Let $q \in \CONTD{2}(\BOmega)$. The first derivative of $\Delta(\lambda) q$ with respect to the spatial variable contains terms that involve integration over the intervals $[-1,x]$ and $[x,1]$. The second derivative has a nicer structure:
\begin{equation}
  \label{SecondDerivativeIdentity}
  D^2_x\Delta(\lambda) q = (\lambda+\alpha)e^{\lambda\tau_0} q^{(2)} + 2(c \cdot k)q - (ck^2 \cdot Kq)
\end{equation}
in which $ q^{(2)}$ denotes the second derivative of $q$ and for each $m \in \NN$ the vectors $k^m$ and $ck^m$ in $\CC^N$ have elements $k_i^m$ and $c_ik_i^m$, respectively, for $i=1,\ldots,N$. This identity allows for straightforward calculation of higher derivatives. For the following lemma we recall the definition in \eqref{eq:spectrum:13}.
\begin{lemma}
  \label{prp:UniqueK}
  The set $\mathcal{S} \DEF \{\lambda \in \CC\,:\,\exists i,j \in \{1,\ldots,N\}, i \neq j, \text{ s.t. } k_i^2 =k_j^2\}$ contains at most $\frac{1}{2}N(N-1)$ elements.
\end{lemma}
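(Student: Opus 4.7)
The plan is to reduce the condition $k_i^2 = k_j^2$ to a simple linear equation in $\lambda$ and then count unordered index pairs.

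First I would observe that $k_i^2 = k_j^2$ is equivalent to $(k_i - k_j)(k_i + k_j) = 0$. Using the definition $k_i = \lambda + \mu_i$ from \eqref{eq:spectrum:13}, the first factor is $\mu_i - \mu_j$, which is nonzero whenever $i \neq j$ by the standing assumption that the $\mu_i$ are pairwise distinct. Hence the only way the product can vanish is via the second factor, giving $2\lambda + \mu_i + \mu_j = 0$, i.e.
\begin{equation*}
  \lambda = -\tfrac{1}{2}(\mu_i + \mu_j).
\end{equation*}
Thus for each unordered pair $\{i,j\}$ with $i \neq j$ there is \emph{exactly one} value of $\lambda$ producing the coincidence $k_i^2 = k_j^2$.

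Next I would note that $\mathcal{S}$ is, by definition, the union over such pairs of the singletons containing these special $\lambda$-values. Since the number of unordered pairs drawn from $\{1,\ldots,N\}$ equals $\binom{N}{2} = \tfrac{1}{2}N(N-1)$, and since a union of at most that many singletons has cardinality bounded by the number of singletons, the cardinality bound follows immediately.

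There is no real obstacle here; the only subtlety worth flagging is that the inequality may be strict, since different pairs $\{i,j\}$ and $\{i',j'\}$ can in principle produce the same value $-\tfrac{1}{2}(\mu_i+\mu_j) = -\tfrac{1}{2}(\mu_{i'}+\mu_{j'})$, which is why the statement asserts an upper bound rather than equality. Equality holds exactly when all pairwise sums $\mu_i + \mu_j$ with $i < j$ are distinct.
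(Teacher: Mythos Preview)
Your proof is correct and follows essentially the same approach as the paper's: both reduce $k_i^2 = k_j^2$ with $i \neq j$ to the single equation $\lambda = -\tfrac{1}{2}(\mu_i + \mu_j)$ using the distinctness of the $\mu_i$, and then bound the number of such values by the number $\binom{N}{2}$ of unordered index pairs. Your version is slightly more explicit in spelling out the factorisation $(k_i - k_j)(k_i + k_j)$ and in noting when the bound is attained, but the argument is the same.
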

\begin{proof}
  All $k_i$ are distinct since (by definition) all $\mu_i$ are distinct. So for $ i\neq j$, $k_i^2 = k_j^2 \Rightarrow \lambda=-\frac{1}{2}(\mu_i+\mu_j)\in\mathcal{S}$. The number of (unique) elements in this set is at most the number of unique pairs $(i,j), i,j\leq N, i\neq j$, which equals $\frac{1}{2}N(N-1)$.
\end{proof}
\begin{lemma} \label{lmm:ODEEquivalence}
  Let $\lambda\notin\mathcal{S}$. Then there exist unique vectors $\zeta = [\zeta_0,\ldots,\zeta_{N-1}] \in \CC^N$ and $\beta = [\beta_0,\ldots,\beta_N] \in \CC^{N + 1}$, depending on $\lambda$ and such that for every $q \in \CONTD{2N}(\BOmega)$ one has
  \small
  \begin{displaymath}    
    (\zeta_0 + \zeta_1 D_x^2 + \ldots + \zeta_{N-1} D_x^{2N-2} + D_x^{2N})\Delta(\lambda) q = (\beta_0 + \beta_1 D_x^2 + \ldots + \beta_{N-1} D_x^{2N-2} + \beta_{N} D_x^{2N})q 
  \end{displaymath}
  \normalsize
\end{lemma}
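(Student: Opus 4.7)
The plan is to convert the functional equation $\Delta(\lambda)q=0$ into a linear ODE by exploiting the fact that each integral kernel $e^{-k_i|x-r|}$ is annihilated, up to a point source, by the operator $D_x^2-k_i^2$.

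\textbf{Step 1: a second-order identity for each $K_i$.} For $q\in\CONTD{2}(\BOmega)$, split $K_i q(x)=\int_{-1}^x e^{-k_i(x-r)}q(r)\,dr+\int_x^1 e^{-k_i(r-x)}q(r)\,dr$ and differentiate twice in $x$. A direct computation gives the key identity
\begin{equation*}
  D_x^2 (K_i q)(x) = k_i^2\,(K_i q)(x) - 2 k_i\,q(x).
\end{equation*}

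\textbf{Step 2: iteration.} Applying $D_x^2$ repeatedly and using the identity from Step 1 inductively yields, for $m\ge 1$ and $q\in\CONTD{2m}(\BOmega)$,
\begin{equation*}
  D_x^{2m}(K_i q) = k_i^{2m} K_i q - 2 k_i\sum_{j=0}^{m-1} k_i^{2j}\,q^{(2(m-1-j))}.
\end{equation*}
In particular, $D_x^{2m}(K_i q)$ equals $k_i^{2m} K_i q$ plus a linear combination of even-order derivatives of $q$ strictly below order $2m$.

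\textbf{Step 3: assembling the operator.} Recall from \eqref{eq:spectrum:11} that $\Delta(\lambda)q=(\lambda+\alpha)e^{\lambda\tau_0}q-\sum_{i=1}^N c_i K_i q$. For any prospective coefficients $\zeta_0,\ldots,\zeta_{N-1}$, applying the operator $\zeta_0+\zeta_1 D_x^2+\cdots+\zeta_{N-1}D_x^{2N-2}+D_x^{2N}$ to $\Delta(\lambda)q$ produces, by Step 2, a contribution to the coefficient of $K_i q$ equal to $-c_i P(k_i^2)$, where
\begin{equation*}
  P(z) \DEF z^N + \zeta_{N-1} z^{N-1} + \cdots + \zeta_1 z + \zeta_0.
\end{equation*}
All other terms are even-order derivatives of $q$ of order at most $2N$.

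\textbf{Step 4: eliminating $K_i q$ via Vandermonde.} We require $P(k_i^2)=0$ for $i=1,\ldots,N$. Because $\lambda\notin\mathcal{S}$ the values $k_1^2,\ldots,k_N^2$ are pairwise distinct, so the unique monic polynomial of degree $N$ with these roots is $P(z)=\prod_{i=1}^N(z-k_i^2)$. Equivalently, the linear system for $\zeta$ has the nonsingular Vandermonde matrix in the $k_i^2$ as its coefficient matrix, giving a unique $\zeta\in\CC^N$ depending on $\lambda$.

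\textbf{Step 5: reading off $\beta$.} Once $\zeta$ is fixed as in Step 4, all $K_i q$ terms vanish and what remains is a linear combination of $q,q^{(2)},\ldots,q^{(2N)}$, with coefficients that are polynomial expressions in $\lambda$, $\alpha$, $e^{\lambda\tau_0}$, $c_i$ and $k_i$. These are the unique $\beta_0,\ldots,\beta_N$; the leading coefficient is $\beta_N=(\lambda+\alpha)e^{\lambda\tau_0}$, while the lower $\beta_j$'s are obtained by collecting the remainder terms from Step 2 weighted by the $\zeta_m$'s.

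The only conceptually subtle step is Step 4, where uniqueness relies precisely on the Vandermonde non-degeneracy enabled by $\lambda\notin\mathcal{S}$ (Lemma \ref{prp:UniqueK}). The remaining work in Step 5 is routine bookkeeping and the main tedium of the argument, but no further ideas are needed.
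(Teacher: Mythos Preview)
Your proof is correct and follows essentially the same route as the paper: both derive the second-order identity $D_x^2(K_i q)=k_i^2 K_i q-2k_i q$, iterate, and then eliminate the $K_i q$ terms by invoking the Vandermonde non-degeneracy guaranteed by $\lambda\notin\mathcal{S}$. Your presentation is slightly more streamlined in that you track the coefficient of $K_i q$ directly as $-c_i P(k_i^2)$ for the monic polynomial $P$, whereas the paper organises the same computation into a matrix system before extracting the Vandermonde condition; the content is the same.
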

\begin{proof}
  Let $Q \DEF [q,q^{(2)},\ldots,q^{(2N)}]$. Repeated differentiation of \eqref{SecondDerivativeIdentity} yields the following system of equations:
  \begin{equation}
    \label{DerivativeSystem}
    \left[
      \begin{array}{r} 
        \Delta(\lambda) q \\ D_x^2\Delta(\lambda) q \\ 
        \multicolumn{1}{c}{\vdots}
        \\ D_x^{2N}\Delta(\lambda) q 
      \end{array} 
    \right] =  M Q - V
  \end{equation}
  where
  \begin{displaymath}
    M \DEF e^{\lambda\tau_0}(\lambda + \alpha)I + 2
    \underbrace{
    \begin{bmatrix}
      0                  & 0           & 0            & \hdots      & 0\\
      (c\cdot k)         & 0           & 0            & \hdots      & 0\\
      (c\cdot k^3)       & (c\cdot k)  & 0            & \hdots      & 0\\
      \vdots             & \ddots      & \ddots       & \ddots      & \vdots\\
      (c \cdot k^{2N-1}) & \hdots      & (c\cdot k^3) & (c\cdot k)  & 0\\
    \end{bmatrix}}_{\DEF \Xi^T}, \qquad
    V \DEF 
    \left[
      \begin{array}{r}
        (c\cdot Kq) \\ 
        (ck^2\cdot Kq) \\ 
        (ck^4\cdot Kq) \\ 
        \multicolumn{1}{c}{\vdots}\\
        (ck^{2N}\cdot Kq) 
      \end{array}
    \right]
  \end{displaymath}
  and $I$ is the identity matrix of size $N + 1$. (Note that \eqref{DerivativeSystem} is an equality that holds on $\BOmega$. Also, the definition of $\Xi^T$ is not used in the current proof, but will reoccur in Appendix \ref{app:ProofCharPolynom}.) We take a linear combination of the rows in \eqref{DerivativeSystem} with the components of the vector $Z \DEF [\zeta, 1] \in \CC^{N+1}$ such that this combination of the elements of $V$ in (\ref{DerivativeSystem}) vanishes. This eliminates all integral terms entering \eqref{DerivativeSystem} via $Kq$. Thus we seek $\zeta$ such that $Z^T V = 0$, i.e.
  \begin{equation}
    \label{eq:spectrum:14}
    \begin{bmatrix}
      \zeta& 1
    \end{bmatrix}
    \underbrace{
      \begin{bmatrix}
        1 & 1 & \hdots & 1 \\
        k_1^2 & k_2^2 & \hdots & k_N^2 \\
        k_1^4 & k_2^4 & \hdots & k_N^4 \\
        \vdots & \vdots & & \vdots \\
        k_1^{2N} & k_2^{2N} & \hdots & k_N^{2N} 
      \end{bmatrix}}_{\DEF \hat{W}^T}
    \begin{bmatrix} 
      c_1K_1 q \\ c_2K_2 q \\ \vdots \\ c_NK_N q 
    \end{bmatrix} = 0
  \end{equation}
  on $\BOmega$. If this equation is to be satisfied for any $q$, then we must have $\hat{W}Z=0$, which is equivalent to
  \begin{equation} 
    \label{VandermondeDefinition}
    \underbrace{
      \begin{bmatrix}
        1 & k_1^2 & k_1^4 & \hdots & k_1^{2N-2} \\
        1 & k_2^2 & k_2^4 & \hdots & k_2^{2N-2} \\
        \vdots & \vdots & \vdots &  & \vdots \\
        1 & k_N^2 & k_N^4 & \hdots & k_N^{2N-2}
      \end{bmatrix}}_{\DEF W} 
    \zeta = -
    \begin{bmatrix} 
      k_1^{2N} \\ k_2^{2N} \\ \vdots \\ k_N^{2N} 
    \end{bmatrix}
\end{equation}
The $N \times N$ Vandermonde matrix $W$ is invertible 
%\footnote{{\color{red} Suppose $W$ is not invertible because $\lambda$ happens to be in $\mathcal{S}$. Would it in that case be possible to prove that the right-hand side of \eqref{VandermondeDefinition} is orthogonal to the (non-trivial) nullspace of the adjoint $\STAR{W}$, in which case (by Fredholm's condition) a (non-unique) solution $\zeta$ of \eqref{VandermondeDefinition} still exists? Would this help us to get rid of the condition $\lambda \not\in \mathcal{S}$? I think this can be done. To give an example, suppose that $k_1 = k_3$. Then rows $1$ and $3$ of $W$ are linearly dependent and the nullspace of $\STAR{W}$ is spanned by $[1,0,-1,0,\ldots,0] \in \CC^N$, which is clearly orthogonal to the right-hand side of \eqref{VandermondeDefinition}. Of course we loose uniqueness, but does that matter?}}
 since all $k_i^2$ are distinct by Proposition \ref{prp:UniqueK}. Hence $\zeta$ can be found by applying $W^{-1}$ to \eqref{VandermondeDefinition}. To find $\beta$ we apply the row vector $[\zeta,1]^T$ from the left to \eqref{DerivativeSystem} to infer that $\beta^T = [\zeta,1]^T M$. Hence
\begin{equation} 
  \label{VandermondeInverse}
  \beta = M^T 
  \begin{bmatrix}
    \zeta\\ 1
  \end{bmatrix}
  = -M^T 
  \begin{bmatrix} 
    W^{-1} & \EMPTY \\ 
    \EMPTY & 1 
  \end{bmatrix}
  \begin{bmatrix} 
    k_1^{2N} \\ k_2^{2N} \\ \vdots \\ k_{N}^{2N} \\ -1 
  \end{bmatrix}
\end{equation}
which concludes the proof.
\end{proof}

\begin{remark}\label{rmk:Order2N}
  For $\lambda\in\mathcal{S}$ the vectors $\zeta$ and $\beta$ still exist, but they are not unique, as can be seen from \eqref{VandermondeDefinition}. For simplicity we do not consider this case here. \hfill \QEDEX
\end{remark}

\begin{theorem}\label{thm:ODE}
  Suppose $\lambda \not\in \mathcal{S}$ and let $\{\beta_i\}_{i=1}^N$ as in Lemma \ref{lmm:ODEEquivalence}. Then $\Delta(\lambda)q = 0$ implies
  \begin{equation}
    \label{ODE01}
    \beta_0 q + \beta_1 q^{(2)} + \ldots + \beta_{N-1} q^{(2N-2)} + \beta_N q^{(2N)} = 0 
\end{equation}
\end{theorem}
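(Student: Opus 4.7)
The plan is to derive Theorem \ref{thm:ODE} as an immediate consequence of Lemma \ref{lmm:ODEEquivalence}, once we secure enough spatial regularity of $q$ to apply that lemma. The only non-automatic ingredient in the statement is that $q$ must be at least $2N$ times differentiable to make sense of the operator $\zeta_0 + \zeta_1 D_x^2 + \cdots + D_x^{2N}$ acting on $\Delta(\lambda)q$; once that is in place the rest is algebra.

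First I would invoke the Proposition preceding Lemma \ref{prp:UniqueK}: any $q \in Y = \CONT(\BOmega)$ with $\Delta(\lambda)q = 0$ is automatically in $\CONTD{\infty}(\BOmega)$, since $\Delta(\lambda)q = 0$ rewrites as
\begin{displaymath}
q = \frac{1}{(\lambda+\alpha)e^{\lambda\tau_0}}\sum_{i=1}^N c_i K_i q,
\end{displaymath}
and each $K_i$ maps $Y$ into $\CONTD{1}(\BOmega)$ (and more generally raises smoothness by one), so a standard bootstrap gives $q \in \CONTD{\infty}(\BOmega)$. In particular $q \in \CONTD{2N}(\BOmega)$, so the identity in Lemma \ref{lmm:ODEEquivalence} is valid for this $q$.

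Second, I would apply the identity itself. Since $\Delta(\lambda)q$ vanishes identically on $\BOmega$, every spatial derivative $D_x^{2j}\Delta(\lambda)q$ vanishes identically as well, so the left-hand side of the identity
\begin{displaymath}
(\zeta_0 + \zeta_1 D_x^2 + \cdots + \zeta_{N-1} D_x^{2N-2} + D_x^{2N})\Delta(\lambda)q = (\beta_0 + \beta_1 D_x^2 + \cdots + \beta_N D_x^{2N})q
\end{displaymath}
is the zero function on $\BOmega$. Equating with the right-hand side yields \eqref{ODE01}, completing the argument.

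There is no real obstacle at this step; the substance of the theorem lives in Lemma \ref{lmm:ODEEquivalence}, where the coefficient vector $\zeta$ is obtained by inverting the Vandermonde matrix $W$ built from the squares $k_i^2$. Invertibility of $W$ is precisely the hypothesis $\lambda \notin \mathcal{S}$, and this is the only place where that restriction is used. I would mention explicitly that the excluded set $\mathcal{S}$ is finite (Lemma \ref{prp:UniqueK}), so the theorem applies at all but finitely many points of $\CC$, which is exactly what one needs in order to use \eqref{ODE01} as a tool for locating the point spectrum via a finite-dimensional linear ODE.
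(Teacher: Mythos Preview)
Your proposal is correct and follows essentially the same route as the paper: establish $q \in \CONTD{\infty}(\BOmega)$ via the bootstrap, note that $\Delta(\lambda)q \equiv 0$ forces all its spatial derivatives to vanish, and then read off \eqref{ODE01} from the identity of Lemma \ref{lmm:ODEEquivalence}. The paper's own proof is the two-line version of exactly this argument, with the smoothness already established in the earlier Proposition.
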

\begin{proof}
  Since $\Delta(\lambda) q=0$ on $\BOmega$ it holds that $D^m\Delta(\lambda) q=0$ for all $m \in \NN$. The result now follows from Lemma \ref{lmm:ODEEquivalence}.
\end{proof}
Our next objective is to obtain what one could call a converse to the above theorem. Specifically, we ask when for a given $\lambda \in \CC$ with $\lambda \not\in \mathcal{S}$ a solution $q$ of \eqref{ODE01} also satisfies $\Delta(\lambda)q=0$. For this we start by noting that eigenvalues of the ODE \eqref{ODE01} are roots of the characteristic polynomial
\begin{equation} 
  \label{ODE01CharPolynom} 
  \mathcal{P}(\rho) = \beta_{N} \rho^{2N} + \beta_{N-1}\rho^{2N-2} + \ldots + \beta_1\rho^2 + \beta_0
\end{equation}
Evaluating the coefficients $\beta_i$ of this polynomial by means of \eqref{VandermondeInverse} yields the following result. Its proof may be found in Appendix \ref{app:ProofCharPolynom}.
\begin{proposition}\label{prp:ODE01CharEq}
For $\lambda \not\in \mathcal{S}$ the characteristic polynomial $\mathcal{P}$ is given by
\begin{equation}\label{ODE01CharPolynom2}
  \mathcal{P}(\rho) = \frac{e^{\lambda\tau_0}(\lambda+\alpha)}{2}\prod_{j=1}^N (\rho^2-k_j(\lambda)^2) + \sum_{i=1}^{N}{c_i k_i(\lambda) \prod_{\substack{j=1 \\ j\neq i}}^N{(\rho^2-k_j(\lambda)^2)}}
\end{equation}
\end{proposition}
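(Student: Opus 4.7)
The plan is to evaluate $\mathcal{P}(\rho) = \sum_{i=0}^N \beta_i \rho^{2i}$ directly from the explicit formula $\beta = M^T Z$ given in \eqref{VandermondeInverse}, where $Z = [\zeta_0,\ldots,\zeta_{N-1},1]^T$. The argument hinges on one clean observation: the Vandermonde system \eqref{VandermondeDefinition} is nothing but the assertion that the monic polynomial
\begin{displaymath}
  P(x) \DEF \zeta_0 + \zeta_1 x + \ldots + \zeta_{N-1} x^{N-1} + x^N
\end{displaymath}
vanishes at the $N$ points $k_1^2,\ldots,k_N^2$, which are distinct since $\lambda \not\in \mathcal{S}$ (Proposition \ref{prp:UniqueK}). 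Hence $P(x) = \prod_{j=1}^N (x - k_j^2)$, so in particular $\sum_{i=0}^N Z_i \rho^{2i} = \prod_{j=1}^N(\rho^2 - k_j^2)$.

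Next I would split the contribution to $\mathcal{P}(\rho)$ according to the decomposition $M = e^{\lambda\tau_0}(\lambda+\alpha)I + 2\Xi^T$. The diagonal part of $M^T$ immediately contributes $e^{\lambda\tau_0}(\lambda+\alpha)\sum_i Z_i \rho^{2i} = e^{\lambda\tau_0}(\lambda+\alpha)\prod_j(\rho^2 - k_j^2)$, reproducing the first term of \eqref{ODE01CharPolynom2} up to an overall constant. For the strictly triangular part, unpacking $(M^T Z)_i = 2\sum_{j>i}(c\cdot k^{2(j-i)-1}) Z_j$ and exchanging the order of summation yields
\begin{displaymath}
  2\sum_{l=1}^N c_l k_l \sum_{j=1}^N Z_j \sum_{m=1}^j k_l^{2(m-1)} \rho^{2(j-m)},
\end{displaymath}
where $m = j - i$ and $(c \cdot k^{2m-1}) = \sum_l c_l k_l \cdot k_l^{2(m-1)}$. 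The innermost sum is the finite geometric series $\sum_{m=1}^j k_l^{2(m-1)} \rho^{2(j-m)} = (k_l^{2j} - \rho^{2j})/(k_l^2 - \rho^2)$, so the sum over $j$ collapses to $(P(k_l^2) - P(\rho^2))/(k_l^2 - \rho^2)$. Using $P(k_l^2) = 0$ this simplifies to $P(\rho^2)/(\rho^2 - k_l^2) = \prod_{j\neq l}(\rho^2 - k_j^2)$, so the triangular part contributes $2\sum_l c_l k_l \prod_{j\neq l}(\rho^2 - k_j^2)$.

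Combining the two pieces produces the formula \eqref{ODE01CharPolynom2}, up to an overall scalar factor of $2$; since the characteristic polynomial of an ODE is only determined up to a nonzero multiplicative constant the $1/2$ appearing in the statement is a harmless normalisation convention and leaves the roots $\rho$ unchanged. I expect the main obstacle to be combinatorial bookkeeping rather than anything conceptually subtle: handling the order-of-summation exchange, managing the geometric-sum identity, and tracking the powers of $k_l$ when expanding $(c \cdot k^{2(j-i)-1})$. Conceptually, the whole proof is driven by the single observation that \eqref{VandermondeDefinition} is Lagrange-type data pinning down $P$ as the polynomial with prescribed roots $k_j^2$.
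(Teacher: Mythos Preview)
Your proposal is correct and follows the same overall structure as the paper's proof in Appendix \ref{app:ProofCharPolynom}: write $\mathcal{P}(\rho)=r^T M^T Z$, split $M^T$ into its diagonal and strictly triangular parts, and evaluate each contribution separately, arriving at \eqref{ODE01CharPolynom2} up to the harmless overall factor $2$ (which the paper also notes explicitly).

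Where you differ is in execution, and your version is tidier. The paper solves \eqref{VandermondeDefinition} by Gaussian elimination to obtain the $\zeta_i$ as signed elementary symmetric functions of the $k_j^2$, and then decomposes $\Xi=\sum_i c_i k_i\,\Xi_i$ into rank-one pieces to evaluate $r^T\Xi_i Z$ directly. You bypass both steps by observing that \eqref{VandermondeDefinition} simply says the monic polynomial $P(x)=\sum_i Z_i x^i$ vanishes at each $k_j^2$, hence $P(x)=\prod_j(x-k_j^2)$; the triangular contribution then collapses via the geometric-series identity and the single evaluation $P(k_l^2)=0$. This is the same argument at a higher level of abstraction: your $P(k_l^2)=0$ is exactly what the paper's elementary-symmetric-function formula for $Z$ encodes, and your divided-difference $\bigl(P(k_l^2)-P(\rho^2)\bigr)/(k_l^2-\rho^2)$ is what the paper's $\Xi_i$ computation produces entrywise. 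Both routes are short; yours avoids the explicit combinatorics.
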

Since $\mathcal{P}$ is an even function, it follows that if $\rho \in \CC$ is an eigenvalue of \eqref{ODE01} then the same is true for $-\rho$. 
\begin{proposition}\label{prp:EigenFunctionAnsatz}
  If \eqref{ODE01CharPolynom2} has $2N$ distinct roots ${\pm\rho_1(\lambda),\ldots,\pm\rho_N(\lambda)}$ then the general solution of \eqref{ODE01} is of the form
  \begin{equation}
    \label{EigenFunctionAnsatz}
    q_{\lambda}(x) = \sum_{i=1}^{N}{\bigl[\gamma_i e^{\rho_i(\lambda) x} + \gamma_{-i} e^{-\rho_i(\lambda) x}\bigr]}  \qquad \forall\,x \in \BOmega
  \end{equation}
  where the coefficients $\gamma_{\pm i} \in \CC$ are arbitrary.
\end{proposition}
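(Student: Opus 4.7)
The plan is to apply the classical theory of linear constant-coefficient ordinary differential equations to \eqref{ODE01}. The hypothesis that $\mathcal{P}$ has $2N$ distinct roots forces $\beta_N\neq 0$, since otherwise $\mathcal{P}$ would be a polynomial of degree at most $2N-2$ and could not possess $2N$ distinct roots. Consequently \eqref{ODE01} is a genuine linear ODE of order $2N$ with constant coefficients, so by the standard existence and uniqueness theorem its solution space over $\BOmega$ has dimension exactly $2N$.

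Next, for each simple root $\rho$ of $\mathcal{P}$, direct substitution of $q(x)=e^{\rho x}$ into \eqref{ODE01} reduces the left-hand side to $\mathcal{P}(\rho)e^{\rho x}=0$, showing that $e^{\rho x}$ is a solution. Applied to the $2N$ distinct roots $\pm\rho_1(\lambda),\ldots,\pm\rho_N(\lambda)$ this yields $2N$ exponential solutions. These are linearly independent on $\BOmega$, since exponentials with pairwise distinct exponents are always linearly independent (as follows, for instance, from the nonvanishing of the corresponding Vandermonde-type Wronskian at any single point in $\BOmega$). Having $2N$ linearly independent elements of a $2N$-dimensional solution space, they form a basis, and hence every solution of \eqref{ODE01} admits the representation \eqref{EigenFunctionAnsatz} with uniquely determined coefficients $\gamma_{\pm i}\in\CC$.

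There is no real obstacle in this argument; the content is entirely classical. The only subtlety worth flagging is that the distinct-roots hypothesis does double duty: it both guarantees $\beta_N\neq 0$, so that \eqref{ODE01} is truly of order $2N$, and it supplies the linear independence of the candidate basis of exponentials. Note also that the claim is purely about the general solution of the ODE \eqref{ODE01}; the more delicate question of which of these solutions additionally satisfy the original equation $\Delta(\lambda)q=0$ (i.e., the converse to Theorem \ref{thm:ODE}) is a separate matter and is not addressed by this proposition.
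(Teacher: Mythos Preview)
Your argument is correct. The paper actually states this proposition without proof, treating it as an immediate consequence of the classical theory of linear constant-coefficient ODEs; your write-up supplies exactly the standard details (in particular the observation that $2N$ distinct roots force $\beta_N\neq 0$, so the equation really has order $2N$) that the paper leaves implicit.
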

For \eqref{EigenFunctionAnsatz} to satisfy $\Delta(\lambda)q_{\lambda}=0$, from \eqref{eq:spectrum:11} we see that
\begin{equation}
  \label{eq:spectrum:16}
  \begin{aligned}
    0 = (\Delta(\lambda)q)(x) &= e^{\lambda \tau_0}(\lambda+\alpha) \sum_{i=1}^{N}{\bigl[\gamma_i e^{\rho_i x} + \gamma_{-i} e^{-\rho_i x}\bigr]}\\
    &- \sum_{j=1}^{N}{c_j\sum_{i=1}^N{\Bigl[\gamma_i\int_{-1}^1{e^{-k_j|x-r|+\rho_i r}\,dr} + \gamma_{-i}\int_{-1}^1{e^{-k_j|x-r|-\rho_i r}\,dr}\Bigr]}}
  \end{aligned}
\end{equation}
must hold for all $x \in \BOmega$. For notational convenience we have suppressed the dependence on $\lambda$ of $q$, $\rho_i$ and $k$. Recalling that $\BOmega = [-1,x] \cup [x,1]$ for each fixed $x \in \BOmega$, we split the domains of integration accordingly. If 
\begin{equation}
  \label{eq:spectrum:15}
  k_j(\lambda) \neq \pm\rho_i(\lambda) \qquad \forall\,i,j=1,2,\ldots,N  
\end{equation}
then \eqref{eq:spectrum:16} becomes
\begin{align*}
  0 &= e^{\lambda \tau_0}(\lambda+\alpha) \sum_{i=1}^{N}{\bigl[\gamma_i e^{\rho_i x} + \gamma_{-i} e^{-\rho_i x}\bigr]}\\
  &- \sum_{j=1}^{N}{c_j\sum_{i=1}^N{\gamma_i \Bigl[\frac{2k_j}{k_j^2-\rho_i^2}e^{\rho_i x} - \frac{e^{-(k_j - \rho_i)}}{k_j-\rho_i}e^{k_j x} - \frac{e^{-(k_j+\rho_i)}}{k_j + \rho_i}e^{-k_j x}\Bigr]}}\\
  &+ \sum_{j=1}^N{c_j\sum_{i=1}^N{\gamma_{-i} \Bigl[\frac{2k_j}{k_j^2-\rho_i^2}e^{-\rho_i x} - \frac{e^{-(k_j + \rho_i)}}{k_j+\rho_i}e^{k_j x} - \frac{e^{-(k_j-\rho_i)}}{k_j - \rho_i}e^{-k_j x} \Bigr]}} 
\end{align*}
Sorting the terms according to their exponents in $x$ while again suppressing dependence on $\lambda$ of $\rho_i$ and $k$ yields
\small
\begin{align*}
  0 &= \sum_{i=1}^{N}{\Bigl\{ \gamma_i e^{\rho_i x} \Bigl[e^{\lambda\tau_0}(\lambda+\alpha) - \sum_{j=1}^N{\frac{2c_j k_j}{k_j^2-\rho_i^2}} \Bigr] + \gamma_{-i} e^{-\rho_i x} \Bigl[e^{\lambda\tau_0}(\lambda+\alpha) -  \sum_{j=1}^N{\frac{2 c_jk_j}{k_j^2-\rho_i^2}} \Bigr]\Bigr\}}\\
  &+ \sum_{j=1}^{N} c_j e^{-k_j}  \Bigl\{e^{k_j x}\Bigl[\sum_{i=1}^{N} \gamma_i \frac{e^{\rho_i}}{k_j-\rho_i} + \sum_{i=1}^{n} \gamma_{-i} \frac{e^{-\rho_i}}{k_j+\rho_i} \Bigr] + e^{-k_j x} \Bigl[\sum_{i=1}^{N} \gamma_i \frac{e^{-\rho_i}}{k_j + \rho_i} + \sum_{i=1}^{n} \gamma_{-i} \frac{e^{\rho_i}}{k_j-\rho_i} \Bigr]\Bigr\}
\end{align*}
\normalsize
Proposition \ref{prp:ODE01CharEq} guarantees that all coefficients of $e^{\pm\rho_i(\lambda)x}$ vanish. As for the remaining terms, all coefficients of $e^{\pm k_j(\lambda)x}$ should vanish as well. Thus we must have
\begin{align}
  \begin{split}
    \sum_{j=1}^{N}{c_j e^{-k_j} e^{k_j x} \Bigl[\sum_{i=1}^{N}{\gamma_i \frac{e^{\rho_i}}{k_j-\rho_i}} + \sum_{i=1}^{n}{\gamma_{-i} \frac{e^{-\rho_i}}{k_j+\rho_i}}\Bigr]} &= 0 \\
    \sum_{j=1}^{N}{c_j e^{-k_j} e^{-k_j x} \Bigl[\sum_{i=1}^{N}{\gamma_i \frac{e^{-\rho_i}}{k_j + \rho_i}} + \sum_{i=1}^{n}{\gamma_{-i} \frac{e^{\rho_i}}{k_j-\rho_i}}\Bigr]} &= 0 
  \end{split} \nonumber
\end{align}
where dependence on $\lambda$ of $\rho_i$ and $k$ was suppressed. This yields a set of $2N$ linear equations: one for each $e^{\pm k_j(\lambda)x}$. With $\Gamma = [\gamma_1, \gamma_2,\ldots,\gamma_N, \gamma_{-1}, \gamma_{-2},\ldots,\gamma_{-N}]$ and the matrix $S(\lambda)$ defined by
\begin{equation}
  \label{CharacteristicMatrix}
  S(\lambda) \DEF 
  \begin{bmatrix}
    S^-_{\lambda} & S^+_{\lambda} \\
    S^+_{\lambda} & S^-_{\lambda}
  \end{bmatrix}
\end{equation} 
where
\begin{displaymath}
  [S^-_{\lambda}]_{j,i} \DEF \frac{e^{\rho_i(\lambda)}}{\lambda + \mu_j - \rho_i(\lambda)}, \qquad [S^+_{\lambda}]_{j,i} \DEF \frac{e^{-\rho_i(\lambda)}}{\lambda + \mu_j + \rho_i(\lambda)}
\end{displaymath}
we seek $\Gamma$ such that 
\begin{equation}
  \label{eq:spectrum:12}
  S(\lambda)\Gamma=0
\end{equation}
In order for this system to have a non-trivial solution $\Gamma = \Gamma_{\lambda}$, it is necessary (and sufficient) for the determinant of $S(\lambda)$ to vanish,
\begin{equation}
  \label{CharacteristicEquation}
  \DET{S(\lambda)} = 0 
\end{equation}
This result is summarised in the following theorem. 
\begin{theorem}\label{thm:CharacteristicEquation}
  Suppose that $\lambda \not\in \mathcal{S}$ and assume that the characteristic polynomial $\mathcal{P}$ in \eqref{ODE01CharPolynom2} has $2N$ distinct roots, denoted by $\pm\rho_{i}(\lambda)$ for $i=1,2,\ldots,N$. If $\lambda$ satisfies \eqref{CharacteristicEquation} and \eqref{eq:spectrum:15} then $\lambda \in \PS(A)$. The corresponding eigenfunction is $\varepsilon_\lambda\otimes q_{\lambda}$, with $q_{\lambda}$ given by \eqref{EigenFunctionAnsatz} with $\Gamma_{\lambda}$ a solution of \eqref{eq:spectrum:12}.
\end{theorem}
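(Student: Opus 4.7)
The plan is to invoke Proposition \ref{prop:spectrum:1}: since $\lambda$ is an eigenvalue of $A$ with eigenfunction $\EPS_\lambda \otimes q$ if and only if $q \in Y$ is a non-zero solution of $\Delta(\lambda)q = 0$, it suffices to exhibit such a non-trivial $q_\lambda$, of the form claimed.

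First, because $\DET{S(\lambda)} = 0$ by hypothesis \eqref{CharacteristicEquation}, the $2N \times 2N$ homogeneous system \eqref{eq:spectrum:12} admits a non-trivial solution $\Gamma_\lambda = [\gamma_1,\ldots,\gamma_N,\gamma_{-1},\ldots,\gamma_{-N}] \in \CC^{2N}$. I then define $q_\lambda$ via the ansatz \eqref{EigenFunctionAnsatz} using these coefficients. Since the $2N$ exponents $\pm\rho_i(\lambda)$ are pairwise distinct by assumption, the exponentials $\{e^{\pm\rho_i(\lambda) x}\}_{i=1}^N$ are linearly independent on $\BOmega$, so $\Gamma_\lambda \neq 0$ forces $q_\lambda \not\equiv 0$. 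Clearly $q_\lambda \in \CONTD{\infty}(\BOmega) \subset Y$.

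It remains to verify $\Delta(\lambda)q_\lambda = 0$ pointwise on $\BOmega$. Substituting \eqref{EigenFunctionAnsatz} into \eqref{eq:spectrum:11} and splitting each integration as $\int_{-1}^{1} = \int_{-1}^{x} + \int_{x}^{1}$, the hypothesis \eqref{eq:spectrum:15} guarantees that the denominators $k_j(\lambda) \pm \rho_i(\lambda)$ appearing when the elementary integrals $\int e^{-k_j|x-r| \pm \rho_i r}\,dr$ are computed are all non-zero. This is precisely the calculation already performed in the text between \eqref{eq:spectrum:16} and \eqref{CharacteristicMatrix}, producing an expansion of $(\Delta(\lambda)q_\lambda)(x)$ as a linear combination of the $4N$ exponentials $\{e^{\pm\rho_i(\lambda)x},\,e^{\pm k_j(\lambda)x}\}$.

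The main (and essentially the only) substantive step is to identify the vanishing of each family of coefficients separately. The coefficients of $e^{\pm\rho_i(\lambda)x}$ collapse to $\gamma_{\pm i}\bigl[(\lambda+\alpha)e^{\lambda\tau_0} - \sum_{j=1}^{N} 2c_j k_j/(k_j^2-\rho_i^2)\bigr]$, an expression proportional to $\mathcal{P}(\rho_i(\lambda))$ by Proposition \ref{prp:ODE01CharEq}, so these vanish because $\pm\rho_i(\lambda)$ are chosen as roots of $\mathcal{P}$. The coefficients of $e^{\pm k_j(\lambda)x}$ are, up to the non-vanishing scalar $c_j e^{-k_j(\lambda)}$, precisely the $2N$ rows of $S(\lambda)\Gamma_\lambda$, which vanish by the defining property of $\Gamma_\lambda$. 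The potential pitfall lies solely in matching the regrouped coefficients to $\mathcal{P}(\pm\rho_i)$ on the one hand and to the block structure \eqref{CharacteristicMatrix} of $S(\lambda)$ on the other; once this algebraic bookkeeping is settled, $\Delta(\lambda)q_\lambda = 0$ on $\BOmega$, and the conclusion $\lambda \in \PS(A)$ together with the stated form of the eigenfunction follows from Proposition \ref{prop:spectrum:1}.
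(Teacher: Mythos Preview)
Your proposal is correct and follows essentially the same route as the paper: the theorem there is stated explicitly as a summary of the derivation carried out between \eqref{eq:spectrum:16} and \eqref{CharacteristicMatrix}, and you have simply reorganised that derivation into a forward proof, invoking Proposition~\ref{prop:spectrum:1} at the outset and then checking the vanishing of the $e^{\pm\rho_i(\lambda)x}$ and $e^{\pm k_j(\lambda)x}$ coefficients via Proposition~\ref{prp:ODE01CharEq} and \eqref{eq:spectrum:12} respectively. The one point you make explicit that the paper leaves implicit is the observation that the distinctness of the $\pm\rho_i(\lambda)$ ensures $q_\lambda \not\equiv 0$ whenever $\Gamma_\lambda \neq 0$.
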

\begin{remark}
  \label{rem:spectrum:3}
  Two comments on the above Theorem seem in order.
  \begin{enumerate}[itemsep=1ex,leftmargin=2em]
  \item[(i)]
    The above procedure can easily be adapted to cover the degenerate cases excluded in Theorem \ref{thm:CharacteristicEquation}. All we need is to adjust the form of $q_{\lambda}$ in Proposition \ref{prp:EigenFunctionAnsatz}. We do not pursue this for reasons of clarity and readability. Rather, in specific instances we check that degeneracy is not an issue.
  \item[(ii)]
    We expect that that the order of $\lambda$ as a root of \eqref{CharacteristicEquation} equals the multiplicity of $\lambda$ as a pole of $R(\cdot,A)$, see Corollary \ref{cor:spectrum:3} in \S \ref{sec:spectrum:generalities}. This would give an explicit way to verify simplicity of critical eigenvalues in \S \ref{sec:cm}. We intend to comment on this issue in future work. \hfill \QEDEX
  \end{enumerate}
\end{remark}

\subsection{Resolvent}\label{sec:spectrum:resolvent}
Now that we are able to reduce determining the point spectrum, in this specific example and modulo a technical restriction, to a finite dimensional matrix problem, the next step is to determine the solution of the resolvent problem, 
\begin{equation}
  \label{eq:spectrum:17}
  (z - A)\psi=\phi
\end{equation}
i.e. to find a representation of $\phi \in X$ in terms of the given function $\psi \in X$ when $z \in \rho(A)$. For this task we see from Proposition \ref{prop:spectrum:1} in \S \ref{sec:spectrum:generalities} that we first need to solve
\begin{equation}
  \label{eq:spectrum:22}
  \Delta(z)\psi(0)= S_z \phi
\end{equation}
For our specific example the above is equivalent to an integral equation for $q \DEF \psi(0)$,
\begin{equation} 
  \label{ResolventDefinition}
  (z+\alpha)q(x) - \int_{-1}^{1}{J(x,r)e^{-z\tau_0-z|x-r|}q(r)\,dr} = h_{z}(x) \qquad \forall\,x \in \BOmega
\end{equation}
where
\begin{equation} 
  \label{ResolventHDef}
  h_{z}(x) \DEF \phi(0,x) + \int_{-1}^{1}{\int_{-\tau_0-|x-r|}^{0}{J(x,r)e^{-z(\tau_0+s)-z|x-r|} \phi(s,r)\,ds}\,dr}
\end{equation}
for all $x \in \BOmega$. Inspired by \eqref{EigenFunctionAnsatz} we propose the following variation-of-constants Ansatz for its solution
\begin{displaymath}
  q(x) = g(x) + \sum_{i=1}^{N}{\bigl[\gamma_i(x)e^{\rho_i x} + \gamma_{-i}(x)e^{-\rho_i x}\bigr]} \qquad \forall\,x \in \BOmega
\end{displaymath}
where $\rho_{\pm i}(z)$ are distinct roots of \eqref{ODE01CharPolynom}. We seek $g \in \CONT(\BOmega)$ and $\gamma_{\pm 1}, \ldots,\gamma_{\pm N} \in \CONTD{1}(\BOmega)$. Substitution into \eqref{ResolventDefinition} and suppressing dependence on $z$ of $h$, $\rho_i$ and $k$ yields
\small
\begin{align*}
  e^{z\tau_0}h(x)=& e^{z\tau_0}(z+\alpha)g(x) + e^{z\tau_0}(z+\alpha)\sum_{i=1}^{N}{\bigl[\gamma_i(x)e^{\rho_i x} + \gamma_{-i}(x)e^{-\rho_i x}\bigr]}\\
  &-\sum_{j=1}^{N}{c_j e^{k_j x} \Bigl\{\int_{x}^{1}{e^{-k_j r} g(r)\,dr} + \sum_{i=1}^{N}{\int_{x}^{1}{\bigl[\gamma_i(r)e^{(-k_j+\rho_i) r} + \gamma_{-i}(r)e^{(-k_j-\rho_i) r}\bigr]\,dr}} \Bigr\}}\\
  &-\sum_{j=1}^{N}{c_j e^{-k_j x} \Bigl\{\int_{-1}^{x}{e^{k_j r} g(r)\,dr} + \sum_{i=1}^{N}{\int_{-1}^{x}{\bigl[\gamma_i(r)e^{(k_j+\rho_i) r} + \gamma_{-i}(r)e^{(k_j-\rho_i) r}\bigr]}\,dr} \Bigr\}}
\end{align*}
\normalsize
If \eqref{eq:spectrum:15} holds, we may integrate by parts and rearrange the terms,
\small
\begin{equation}
  \label{ResolventSubstituted01}
  \begin{aligned}
    e^{z\tau_0}h(x)=& e^{z\tau_0}(z+\alpha)g(x) + e^{z\tau_0}(z+\alpha)\sum_{i=1}^{N}{\bigl[\gamma_i(x)e^{\rho_i x} + \gamma_{-i}(x)e^{-\rho_i x} \bigr]}\\
    &-\sum_{i,j=1}^{N} c_j \Bigl[\frac{e^{\rho_i x}}{k_j+\rho_i}\gamma_i(x) + \frac{e^{-\rho_i x}}{k_j-\rho_i}\gamma_{-i}(x) + \frac{e^{\rho_i x}}{k_j-\rho_i}\gamma_i(x) + \frac{e^{-\rho_i x}}{k_j+\rho_i}\gamma_{-i}(x)\Bigr]\\
    &+\sum_{j=1}^{N} c_j e^{k_j x} \Bigl\{ \sum_{i=1}^{N} \Bigl[ \frac{e^{-k_j+\rho_i}\gamma_i(1)}{k_j-\rho_i} + \frac{e^{-k_j-\rho_i}\gamma_{-i}(1)}{k_j+\rho_i} \Bigr] \\
    &\hphantom{+\sum_{j=1}^{N} c_j e^{k_j x} \Bigl\{} - \int_{x}^{1} e^{-k_j r} \Bigl[g(r) + \sum_{i=1}^{N} \frac{e^{\rho_i r}}{k_j-\rho_i}\gamma'_i(r) + \frac{e^{-\rho_i r}}{k_j+\rho_i}\gamma'_{-i}(r)\Bigr] dr \Bigr\}\\
    &+\sum_{j=1}^{N} c_j e^{-k_j x} \Bigl\{ \sum_{i=1}^{N} \Bigl[ \frac{e^{-k_j-\rho_i}\gamma_i(-1)}{k_j+\rho_i} + \frac{e^{-k_j+\rho_i}\gamma_{-i}(-1)}{k_j-\rho_i} \Bigr] \\
    &\hphantom{+\sum_{j=1}^{N} c_j e^{-k_j x} \Bigl\{} + \int_{-1}^{x} e^{k_j r} \Bigl[-g(r) + \sum_{i=1}^{N} \frac{e^{\rho_i r}}{k_j+\rho_i}\gamma'_i(r) + \frac{e^{-\rho_i r}}{k_j-\rho_i}\gamma'_{-i}(r)\Bigr] dr \Bigr\}
  \end{aligned}
\end{equation}
\normalsize
where again dependency of $h$, $\rho_i$ and $k$ on $z$ was suppressed. When $z \not\in \mathcal{S}$, Proposition \ref{prp:ODE01CharEq} is applied and all terms involving $e^{\pm\rho_i(z) x}$ drop out. We can choose $g = g_{z}$ as
\begin{displaymath}
  g_{z}(x) \DEF \frac{h_{z}(x)}{z+\alpha} \qquad \forall\,x \in \BOmega
\end{displaymath}
provided we can achieve that the remaining terms (i.e. the last four lines) of \eqref{ResolventSubstituted01} vanish. So for $j=1,2,\ldots,N$ it should hold that for every $x \in \BOmega$,
\footnotesize
\begin{align}
  \int_{x}^{1}{e^{-k_j r}\Bigl\{g(r) + \sum_{i=1}^{N}{\Bigl[\frac{e^{\rho_i r}}{k_j-\rho_i}\gamma'_i(r) + \frac{e^{-\rho_i r}}{k_j+\rho_i}\gamma'_{-i}(r)\Bigr]}\Bigr\}\,dr} - e^{-k_j}\sum_{i=1}^{N}{\Bigl[\frac{e^{\rho_i}\gamma_i(1)}{k_j-\rho_i} + \frac{e^{-\rho_i}\gamma_{-i}(1)}{k_j+\rho_i}\Bigr]} &= 0 \nonumber \\ 
  \int_{-1}^{x}{e^{k_j r} \Bigl\{-g(r) + \sum_{i=1}^{N}{\Bigl[\frac{e^{\rho_i r}}{k_j+\rho_i}\gamma'_i(r) + \frac{e^{-\rho_i r}}{k_j-\rho_i}\gamma'_{-i}(r)\Bigl]}\Bigr\}\,dr} + e^{-k_j}\sum_{i=1}^{N}{\Bigl[\frac{e^{-\rho_i}\gamma_i(-1)}{k_j+\rho_i} + \frac{e^{\rho_i}\gamma_{-i}(-1)}{k_j-\rho_i}\Bigr]} &= 0 \label{ResolventSystem01} 
\end{align}
\normalsize
with the same notational convention as before. We seek functions $\gamma_{\pm i}$ such that the integrands and the remaining terms in \eqref{ResolventSystem01} vanish. This yields the system
\begin{displaymath}
  \underbrace{
    \begin{bmatrix} 
      T^-_z & T^+_z\\ 
      T^+_z & T^-_z 
    \end{bmatrix}}_{\DEF T(z)} 
  \begin{bmatrix} 
    P^+_{z}(x) & \EMPTY\\ 
    \EMPTY & P^-_{z}(x) 
  \end{bmatrix} 
  \Gamma'(x) = \frac{h_{z}(x)}{z+\alpha} 
  \left[
  \begin{array}{r} 
    -\mathbf{1}\\ 
    \mathbf{1} 
  \end{array}
  \right] \qquad \forall\,x \in \BOmega
\end{displaymath}
where $\mathbf{1} \in \RR^N$ is the vector with one on each entry,
\begin{equation}
  \label{eq:spectrum:18}
  [T^{\pm}_{z}]_{j,i} \DEF \frac{1}{k_j(z) \pm \rho_i(z)}, 
  \qquad P^{\pm}_{z}(x) \DEF 
  \text{diag}_N\bigl[e^{\pm\rho_1(z) x},\ldots,e^{\pm\rho_N(z) x}\bigr]
\end{equation}
and 
\begin{displaymath}
  \Gamma = [\gamma_1, \ldots, \gamma_N, \gamma_{-1},\ldots,\gamma_{-N}]
\end{displaymath}
If the matrix $T(z)$ is invertible, we find $\Gamma = \Gamma_{z}$ by matrix inversion and integration,
\begin{equation} \label{GammaDefinition01}
  \Gamma_{z}(x) = \Gamma_{0,z} + \underbrace{\int_{x_0}^x{\frac{h_{z}(r)}{z+\alpha} 
      \begin{bmatrix} 
        P^-_{z}(r) & \EMPTY\\ 
        \EMPTY & P^+_{z}(r) 
      \end{bmatrix} 
      T(z)^{-1}
      \left[
      \begin{array}{r} 
        -\mathbf{1} \\ \mathbf{1} 
      \end{array}
      \right]\,dr}}_{\DEF \hat{\Gamma}_{z}(x)}
\end{equation}
for some initial reference point $x_0$ in $\BOmega$ and integration constants $\Gamma_{0,z}\in \CC^{2N}$. Any choice of integration constants results in a choince for $\Gamma_z$ for which the integral terms in \eqref{ResolventSystem01} vanish. In order to satisfy the remaining terms in \eqref{ResolventSystem01}, $\Gamma_{0,z}$ is chosen as
\begin{equation} \label{Gamma0Definition}
  \Gamma_{0,z} = - S(z)^{-1} 
  \begin{bmatrix}
    S^-_{z} & S^+_{z} & 0 & 0 \\
    0 & 0 & S^+_{z} & S^-_{z}
  \end{bmatrix}
  \left[
    \begin{array}{l} 
      \hat{\Gamma}_{z}(1) \\ \hat{\Gamma}_{z}(-1) 
    \end{array}
  \right]
\end{equation}
for $S(z)$, $S^+_{z}$, and $S^-_{z}$ as in \eqref{CharacteristicMatrix}. Clearly, $S(z)^{-1}$ exists if and only if $\DET{S(z)} \neq0$, which is consistent with the fact that the resolvent operator $R(z,A)$ is not defined when $z \in \PS(A)$. We are now ready to formulate the key result of this section.
\begin{theorem}
  \label{thm:spectrum:1}
  Suppose that $z \in \rho(A)$ and
  \begin{itemize}
  \item
    $z \not\in \mathcal{S}$;
  \item the characteristic polynomial $\mathcal{P}$ has $2N$ distinct roots;
  \item condition \eqref{eq:spectrum:15} holds and;
  \item the matrix $T(z)$ is invertible.
  \end{itemize}
  Then the solution of \eqref{eq:spectrum:17} is given by $\psi_{z} = \varepsilon_z \otimes q_{z} + H_z \phi$ with
  \begin{equation}
    \label{eq:spectrum:23}
    q_{z}(x) = \frac{h_{z}(x)}{z+\alpha} + \sum_{i=1}^{N}{\bigl[\gamma_{i,z}(x)e^{\rho_i(z) x} + \gamma_{-i,z}(x)e^{-\rho_i(z) x}\bigr]} \qquad \forall\,x \in \BOmega
  \end{equation}
  with $\Gamma_{z}$ given by \eqref{GammaDefinition01} and $h_{z}$ is as in \eqref{ResolventHDef}.
\end{theorem}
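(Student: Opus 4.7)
The plan is to assemble the pieces that have already been laid out in the derivation preceding the theorem into a concise verification. The starting point is Proposition \ref{prop:spectrum:1}: since $z \in \rho(A)$, the formula \eqref{eq:spectrum:6} gives $R(z,A)\phi = (\EPS_z \otimes \Delta(z)^{-1})S_z\phi + H_z\phi$, so it suffices to exhibit $q_z = \Delta(z)^{-1} S_z\phi \in Y$. Equivalently, $q_z$ must solve the integral equation \eqref{ResolventDefinition} with right-hand side $h_z$ given by \eqref{ResolventHDef}. The identification $h_z = S_z\phi$ follows by unwinding the definitions of $S_z$, $H_z$, and $L$ specialised to \eqref{eq:spectrum:4}; this is a routine computation that I would carry out once to pin down signs and exponentials, but it is not where the content lies.

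The body of the proof is the variation-of-constants Ansatz \eqref{eq:spectrum:23}, which I would justify by direct substitution. First I would plug the Ansatz into \eqref{ResolventDefinition}, split each $\int_{-1}^1$ into $\int_{-1}^x$ and $\int_x^1$ using $|x-r|$, and integrate by parts in $r$ on both pieces, which is legitimate because $\gamma_{\pm i} \in \CONTD{1}(\BOmega)$ and condition \eqref{eq:spectrum:15} ensures the exponents $\pm k_j \pm \rho_i$ are all nonzero. This produces exactly the expression \eqref{ResolventSubstituted01}. Next I would invoke the characteristic identity from Proposition \ref{prp:ODE01CharEq}: for each fixed $i$, the bracket
\begin{displaymath}
  e^{z\tau_0}(z+\alpha) - \sum_{j=1}^N \frac{2c_jk_j}{k_j^2 - \rho_i^2}
\end{displaymath}
is (up to the factor $2/\prod_{j}(\rho_i^2-k_j^2)$) the value $\mathcal{P}(\rho_i)$, which vanishes since $\rho_i$ is a root of $\mathcal{P}$. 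Consequently all coefficients of $e^{\pm\rho_i x}$ in \eqref{ResolventSubstituted01} cancel, and choosing $g = h_z/(z+\alpha)$ matches the inhomogeneous term. What remains are the coefficients of $e^{\pm k_j x}$, which must be made to vanish independently for $j = 1,\ldots,N$, since the functions $\{e^{k_jx},e^{-k_jx}\}$ are linearly independent (again by \eqref{eq:spectrum:15} and the distinctness of the $\mu_j$).

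The vanishing condition splits naturally into an integrand condition and a boundary condition, as in \eqref{ResolventSystem01}. Setting the integrand to zero pointwise in $x$ yields the $2N \times 2N$ linear system for $\Gamma'(x)$ with coefficient matrix $T(z)$ defined in \eqref{eq:spectrum:18}; invertibility of $T(z)$ is precisely the hypothesis that lets me solve for $\Gamma'(x)$, giving after integration the formula $\Gamma_z(x) = \Gamma_{0,z} + \hat{\Gamma}_z(x)$ in \eqref{GammaDefinition01}. The remaining boundary terms at $x = \pm 1$ then constitute a linear system in the integration constants $\Gamma_{0,z}$, with coefficient matrix $S(z)$ from \eqref{CharacteristicMatrix}; since $z \in \rho(A)$, Theorem \ref{thm:CharacteristicEquation} guarantees $\DET S(z) \neq 0$, so \eqref{Gamma0Definition} is well-defined and fixes $\Gamma_{0,z}$ uniquely.

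The main obstacle I expect is bookkeeping rather than conceptual: the sign patterns when splitting $|x-r|$, the integration by parts on each half-interval, and the careful matching of the resulting exponentials against the correct rows of $T(z)$ and $S(z)$ all have to be tracked with consistent sign conventions. Once these are verified, the constructed $q_z$ satisfies $\Delta(z)q_z = h_z = S_z\phi$ in $Y$, and by Proposition \ref{prop:spectrum:1} the function $\psi_z = \EPS_z \otimes q_z + H_z\phi$ lies in $D(A)$ and solves \eqref{eq:spectrum:17}. Uniqueness is automatic from $z \in \rho(A)$, so nothing further is needed.
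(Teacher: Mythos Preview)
Your proposal is correct and follows essentially the same route as the paper: the theorem is in fact a summary of the derivation immediately preceding it, and you have recapitulated that derivation faithfully, including the reduction via Proposition~\ref{prop:spectrum:1}, the integration-by-parts computation leading to \eqref{ResolventSubstituted01}, the cancellation of the $e^{\pm\rho_i x}$ terms via Proposition~\ref{prp:ODE01CharEq}, and the use of invertibility of $T(z)$ and $S(z)$ to fix $\Gamma_z$. The only addition worth noting is that you make explicit the linear-independence argument for $\{e^{\pm k_j x}\}$ and the contrapositive appeal to Theorem~\ref{thm:CharacteristicEquation} for $\det S(z)\neq 0$, both of which the paper leaves implicit.
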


\begin{remark}\label{rem:spectrum:4}
The fourth condition in the above Theorem seems peculiar and of a different nature than the first three, which already occured as simplifying conditions in \S \ref{sec:spectrum:chareq}. We refrain from investigating this issue here. Wherever we need the result of this theorem, we check the fourth condition explicitly. \hfill \QEDEX
\end{remark}

%%% Local Variables: 
%%% mode: latex
%%% TeX-master: "neuralfield"
%%% End: 

\section{Normal forms for local bifurcations}\label{sec:cm}
Let $\hat{\phi} \in X$ be a stationary point of the semiflow generated by \eqref{eq:neuralfield:8}. By Theorem \ref{thm:setting:2} in \S \ref{sec:setting:sunstar} the linearisation of this semiflow at $\hat{\phi}$ defines a strongly continuous semigroup $T$ of bounded linear operators on $X$, generated by $A$ as in \eqref{eq:setting:11} and \eqref{eq:spectrum:7}. If $F$ is as in \eqref{eq:setting:6} then $T$ will be the solution semigroup of the linear problem \eqref{eq:spectrum:4}, which is of the form \eqref{eq:spectrum:1}. In the present section we prepare for the computation of a critical normal form when $\hat{\phi}$ undergoes a Hopf or a double Hopf bifurcation. The actual computation is performed in \S \ref{sec:numerics}.
\par
In \S \ref{sec:setting:sunstar} we alluded to the fact that a reformulation of equations of type \eqref{eq:neuralfield:8}, such as \eqref{eq:neuralfield:10}, as an abstract integral equation of type \eqref{eq:setting:7} allows for a relatively straightforward application of basic dynamical results such as the center manifold theorem. Indeed, by \eqref{eq:setting:7} and the exponential estimates of Proposition \ref{prop:spectrum:2} in \S \ref{sec:spectrum:generalities} the general center manifold theory for AIE presented in \cite[Ch. IX]{Diekmann1995} is directly applicable to \eqref{eq:neuralfield:8} in the setting of \S \ref{sec:setting:sunstar}. We shall relegate a more detailed technical presentation to the forthcoming paper \cite{VanGils2012b}.
\par
There exists an efficient approach based on Fredholm's solvability condition towards the derivation of explicit formulas for critical normal form coefficients of local bifurcations of dynamical systems. Once such formulas have been derived for a certain class of dynamical systems, they may be evaluated for specific equations using spectral information from the linearisation at the critical equilibrium or fixed point, together with information on the higher order derivatives of the particular non-linearity. The technique goes back to \cite{Coullet1983} and has been successfully applied to ordinary differential equations \cite{Kuznetsov1999b}, \cite[\S 8.7]{Kuznetsov2004}) and iterated maps \cite{Meijer2006}, \cite{Kuznetsov2005b}, \cite{Govaerts2007}. The resulting formulas have been implemented in the software packages \verb#CONTENT# \cite{Kuznetsov1997}, its successor \verb#MATCONT# \cite{Dhooge2003} and \verb#CL_MATCONT# for maps. 
\par
In the forthcoming paper \cite{Janssens2012} the method is applied to AIE and DDE. Here we briefly summarise the results related to Hopf and double Hopf bifurcations, obtained using the Fredholm solvability technique, see Lemma \ref{lem:normalforms:1} below. In the Hopf case the corresponding formulae have been first obtained in \cite{VanGils1984,Diekmann1995} using a different method. As expected, the formulae given below look very similar to those given in \cite{Kuznetsov1999b} and \cite[\S 8.7]{Kuznetsov2004}. However, one should pay special attention to their proper interpretation in the current functional analytic context.

\subsection{Preliminaries}\label{sec:normalforms:prelims}
In \S\S \ref{sec:normalforms:hopf} and \ref{sec:normalforms:double-hopf} we will consider the situation that $\hat{\phi} \in X$ is a stationary point of the non-linear semiflow generated by \eqref{eq:neuralfield:8} and the linearised problem takes the form \eqref{eq:spectrum:1} with $A$ as in \eqref{eq:spectrum:7} and $L_z$ compact for all $z \neq -\alpha$. There is no loss of generality in assuming that $\hat{\phi} \equiv 0$. Suppose that $A$ has $n_c \ge 1$ simple eigenvalues on the imaginary axis, counting multiplicities.
\begin{remark}
  \label{rem:normalforms:2}
  One may show that $\sigma(A) = \sigma(\STAR{A}) = \sigma(\SUN{A}) = \sigma(\SUNSTAR{A})$, see \cite[p. 100 - 101]{Diekmann1995} and also \cite[Proposition IV.2.18]{Engel2000}. We will use this fact in the remainder of this section. For a detailed discussion of the `lifting' of the spectral properties of $A$ to corresponding properties of the various (adjoint) generators, we refer to \cite[p. 100 - 101]{Diekmann1995}. \hfill \QEDEX
\end{remark}
This implies the existence of a non-trivial center subspace $\XC$ of finite dimension $n_c$ and spanned by some basis $\Phi$ consisting of (generalized) eigenvectors corresponding to the critical eigenvalues of $A$. There exists a locally invariant center manifold $\LCM$ that is tangent to $\XC$ at the origin. One can show that on $\LCM$ the solution satisfies the abstract ODE 
\begin{displaymath}
  \dot{u}(t) = j^{-1}\left(\SUNSTAR{A}ju(t) + R(u(t))\right) \qquad \forall\,t \in \RR
\end{displaymath}
where the non-linearity $R$ is given by Lemma \ref{lem:setting:5} and is as smooth as the mapping $F$ appearing in \eqref{eq:setting:6}. Let $\xi(t)$ be the projection of $u(t)$ onto $\XC$. Then $\xi(t)$ can be expressed uniquely relatively to $\Phi$. The corresponding coordinate vector $z(t)$ of $\xi(t)$ satisfies some ODE that is smoothly equivalent to the normal form 
\begin{equation}
  \label{eq:bifurcation:37}
  \dot{z}(t) = \sum_{1 \le |\nu| \le 3}{g_{\nu}z^{\nu}(t)} + O(|z(t)|^{4}) \qquad \forall t \in \RR
\end{equation}
with unknown critical normal form coefficients $g_{\nu} \in \RR^{n_c}$. Here $\nu$ stands for a multi-index of length $n_c$. If $F$ is sufficiently smooth, we may define 
\begin{subequations}
  \begin{alignat}{6}
    B &\in \BND_2(X,\SUNSTAR{X}), \qquad &B(\phi_1,\phi_2)) &\DEF D^2R(0)(\phi_1,\phi_2)\label{eq:normalforms:3}\\
    C &\in \BND_3(X,\SUNSTAR{X}),  \qquad &C(\phi_1,\phi_2,\phi_3) &\DEF D^3R(0)(\phi_1,\phi_2,\phi_3) \label{eq:normalforms:4}
  \end{alignat}
\end{subequations}
for all $\phi_i \in X$. The nonlinearity $R : X \to \SUNSTAR{X}$ may then be expanded as
\begin{equation}
  \label{eq:bifurcation:39}
  R(\phi) = \frac{1}{2}B(\phi,\phi) + \frac{1}{3!}C(\phi,\phi,\phi) + O(\|\phi\|^4)
\end{equation}
Let $\CMMAPC : V \subset \RR^{n_c} \to X$ be a mapping that is as smooth as $F$ and defined on a neighbourhood $V$ of the origin in the coordinate space $\RR^{n_c}$ with image $\CMMAPC(V) = \LCM$. Then $\CMMAPC$ admits an expansion
\begin{equation}
  \label{eq:bifurcation:40}
  \CMMAPC(z) = \sum_{1 \le |\nu| \le 3}{\frac{1}{\nu!}h_{\nu}z^{\nu}} + O(|z|^4)
\end{equation}
where $\nu$ is a multi-index of length $n_c$ and $h_{\nu} \in X$ is an unknown coefficient. By the invariance of $\LCM$ we have
\begin{displaymath}
 \CMMAPC(z(t)) = u(t) \qquad \forall\,t \in \RR 
\end{displaymath}
Differentiating both sides with respect to time leads to the \emph{homological equation}
\begin{equation}
  \label{eq:bifurcation:41}
  \SUNSTAR{A}j\CMMAPC(z) + R(\CMMAPC(z)) = j(D\CMMAPC(z)\dot{z})
\end{equation}
Substituting the expansions \eqref{eq:bifurcation:37}, \eqref{eq:bifurcation:39} and \eqref{eq:bifurcation:40} into \eqref{eq:bifurcation:41} and equating coefficients of the corresponding powers of $z$, one recursively obtains the unknown coefficients $h_{\nu}$ and $g_{\nu}$ by solving linear operator equations of the form
\begin{equation}
  \label{eq:normalforms:1}
  (\lambda - \SUNSTAR{A})\SUNSTAR{\phi} = \SUNSTAR{\psi}
\end{equation}
where $\lambda \in \CC$ and $\SUNSTAR{\psi} \in \SUNSTAR{X}$ is given. If $\lambda \not\in \sigma(A)$ then \eqref{eq:normalforms:1} has a unique solution $\SUNSTAR{\phi} \in D(\SUNSTAR{A})$ for any given right-hand side. On the other hand, when $\lambda \in \sigma(A)$ a solution $\SUNSTAR{\phi}$ of \eqref{eq:normalforms:1} need not exist for all right-hand sides $\SUNSTAR{\psi}$. The following key lemma provides a condition for solvability that is useful in this situation.
\begin{lemma}[Fredholm solvability]
  \label{lem:normalforms:1}
  Let $\lambda \in \CC \setminus \{-\alpha\}$. Suppose that $L_{\lambda} \in \BND(Y)$ defined in \eqref{eq:spectrum:8} is compact. Then $\lambda - \SUN{A} : D(\SUN{A}) \subset \SUN{X} \to \SUN{X}$ has closed range. In particular, \eqref{eq:normalforms:1} is solvable for $\SUNSTAR{\phi} \in D(\SUNSTAR{A})$ given $\SUNSTAR{\psi} \in \SUNSTAR{X}$ if and only if $\PAIR{\SUN{\phi}}{\SUNSTAR{\psi}} = 0$ for all $\SUN{\phi} \in N(\lambda - \STAR{A})$.
\end{lemma}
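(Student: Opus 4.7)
The plan is to apply Banach's closed range theorem twice. Corollary \ref{cor:spectrum:4} already yields that $\RG(\lambda - A)$ is closed in $X$; the closed range theorem applied to the closed, densely defined operator $\lambda - A$ on $X$ (with adjoint $\lambda - \STAR{A}$ on $\STAR{X}$) then gives closedness of $\RG(\lambda - \STAR{A})$ in $\STAR{X}$.

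The key intermediate step is transferring this closedness to $\lambda - \SUN{A}$ on $\SUN{X}$. I would use the spectral identification $\sigma(A) = \sigma(\SUN{A})$ (Remark \ref{rem:normalforms:2}) together with the fact, following from sun-star theory \cite[\S III.7]{Diekmann1995}, that the Laurent expansions of $R(\cdot,A)$ and $R(\cdot,\SUN{A})$ at any common isolated spectral point have the same finite pole order, with generalised eigenspaces in one-to-one correspondence. If $\lambda \in \rho(A) = \rho(\SUN{A})$ there is nothing to prove. Otherwise Corollary \ref{cor:spectrum:3} implies that $\lambda$ is an isolated pole of $R(\cdot,A)$ of finite order with finite-dimensional generalised eigenspace, and the same then holds for $R(\cdot,\SUN{A})$. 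Denoting the associated finite-rank spectral projection on $\SUN{X}$ by $\SUN{P}_{\lambda}$, the Riesz decomposition $\SUN{X} = \SUN{P}_{\lambda}\SUN{X} \oplus (I - \SUN{P}_{\lambda})\SUN{X}$ reduces $\SUN{A}$, and $\lambda - \SUN{A}$ is bijective on the complementary summand, so
\begin{equation*}
\RG(\lambda - \SUN{A}) = (\lambda - \SUN{A})\SUN{P}_{\lambda}\SUN{X} \oplus (I - \SUN{P}_{\lambda})\SUN{X}
\end{equation*}
is the sum of a finite-dimensional subspace and a closed subspace, hence closed.

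A second application of the closed range theorem, now to $\lambda - \SUN{A}$ on $\SUN{X}$ with adjoint $\lambda - \SUNSTAR{A}$ on $\SUNSTAR{X}$, yields that $\RG(\lambda - \SUNSTAR{A})$ is closed and coincides with the annihilator of $\KER(\lambda - \SUN{A})$, i.e.,
\begin{equation*}
\RG(\lambda - \SUNSTAR{A}) = \{\SUNSTAR{\psi} \in \SUNSTAR{X} : \PAIR{\SUN{\phi}}{\SUNSTAR{\psi}} = 0 \text{ for all } \SUN{\phi} \in \KER(\lambda - \SUN{A})\}.
\end{equation*}
This is exactly the claimed solvability condition once we identify $\KER(\lambda - \STAR{A})$ with $\KER(\lambda - \SUN{A})$, which follows because eigenvectors of $\STAR{A}$ at the isolated eigenvalue $\lambda$ can be realised as residues of $R(\cdot,\STAR{A})$ and therefore automatically lie in $\SUN{X}$.

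The main obstacle I anticipate is the middle step: one must invoke sun-star theory to ensure \emph{a priori} that the generalised eigenspace of $\SUN{A}$ at $\lambda$ is finite-dimensional and in correspondence with that of $A$. Avoiding this machinery would require a direct sequential argument — lifting a convergent sequence in $\RG(\lambda - \SUN{A})$ into $\STAR{X}$, extracting a preimage via the $\STAR{X}$-closedness of $\RG(\lambda - \STAR{A})$, and correcting it by an element of $\KER(\lambda - \STAR{A})$ so as to remain in $D(\SUN{A}) \subset \SUN{X}$.
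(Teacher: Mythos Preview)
Your argument via the Riesz spectral decomposition is correct, but it is more indirect than the paper's route. The paper takes precisely the ``direct sequential argument'' you sketch at the end as an alternative --- and it turns out to be simpler than you anticipate. Given a sequence $\SUN{\psi}_n = (\lambda - \SUN{A})\SUN{\phi}_n \to \SUN{\psi}$ in $\SUN{X}$, one uses \eqref{eq:setting:10} to rewrite $\SUN{\psi}_n = (\lambda - \STAR{A})\SUN{\phi}_n$, invokes closedness of $\RG(\lambda - \STAR{A})$ to obtain \emph{some} $\SUN{\phi} \in D(\STAR{A})$ with $(\lambda - \STAR{A})\SUN{\phi} = \SUN{\psi}$, and then observes that $\STAR{A}\SUN{\phi} = \lambda\SUN{\phi} - \SUN{\psi} \in \SUN{X}$, so $\SUN{\phi} \in D(\SUN{A})$ automatically by \eqref{eq:setting:10}. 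No correction by an element of $\KER(\lambda - \STAR{A})$ is needed. The same trick handles the kernel identification: if $\SUN{\phi} \in \KER(\lambda - \STAR{A})$ then $\STAR{A}\SUN{\phi} = \lambda\SUN{\phi} \in \SUN{X}$, whence $\SUN{\phi} \in D(\SUN{A})$, so $\KER(\lambda - \STAR{A}) = \KER(\lambda - \SUN{A})$ directly --- no residue argument required.

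The trade-off: the paper's approach uses nothing beyond Corollary~\ref{cor:spectrum:4}, the closed range theorem, and the purely algebraic relation \eqref{eq:setting:10}; in particular it never invokes Corollary~\ref{cor:spectrum:3} or any sun-star correspondence of spectral projections. Your approach, while heavier in prerequisites, does yield the additional structural information that $\RG(\lambda - \SUN{A})$ has finite codimension, which the direct argument does not make explicit.
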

\begin{proof}
  From Corollary \ref{cor:spectrum:4} in \S \ref{sec:spectrum:generalities} we infer that $\RG(\lambda - \STAR{A})$ is closed. We first prove that this implies that $\RG(\lambda - \SUN{A})$ is closed as well. Indeed, let $(\SUN{\psi}_n)_{n \in \NN}$ be a sequence in $\RG(\lambda - \SUN{A})$ such that $\SUN{\psi}_n \to \SUN{\psi} \in \SUN{X}$. Then there is a sequence $(\SUN{\phi}_n)_{n \in \NN}$ in $D(\SUN{A})$ such that
  \begin{displaymath}
    \SUN{\psi}_n = (\lambda - \SUN{A})\SUN{\phi}_n = (\lambda - \STAR{A})\SUN{\phi}_n  \qquad \forall\,n \in \NN
  \end{displaymath}
  where \eqref{eq:setting:10} was used in the second equality. Hence $\SUN{\psi}_n \in \RG(\lambda - \STAR{A})$ for all $n \in \NN$, so there exists $\SUN{\phi} \in D(\STAR{A})$ such that $(\lambda - \STAR{A})\SUN{\phi} = \SUN{\psi}$. Now 
  \begin{displaymath}
    \STAR{A}\SUN{\phi} = -(\lambda - \STAR{A})\SUN{\phi} + \lambda \SUN{\phi} = -\SUN{\psi} + \lambda \SUN{\phi} \in \SUN{X}
  \end{displaymath}
  so $\SUN{\phi} \in D(\SUN{A})$ and $(\lambda - \SUN{A})\SUN{\phi} = \SUN{\psi}$ by \eqref{eq:setting:10}. Hence $\SUN{\psi} \in \RG(\lambda - \SUN{A})$.
  \par
The second statement in the lemma is obtained from Banach's Closed Range Theorem \cite[\S VII.5]{Yosida1980} by which it follows that \eqref{eq:normalforms:1} has a solution if and only if $\SUNSTAR{\psi}$ annihilates $N(\lambda - \SUN{A})$, i.e. if and only if
  \begin{displaymath}
    \PAIR{\SUN{\phi}}{\SUNSTAR{\psi}} = 0 \qquad \forall\,\SUN{\phi} \in N(\lambda - \SUN{A})
  \end{displaymath}
  To conclude the proof we show that $N(\lambda - \SUN{A}) = N(\lambda - \STAR{A})$. Indeed, $N(\lambda - \SUN{A}) \subseteq N(\lambda - \STAR{A})$ by virtue of \eqref{eq:setting:10}. Conversely, suppose that $\SUN{\phi} \in N(\lambda - \STAR{A})$. Then $\SUN{\phi} \in D(\STAR{A})$ and $\STAR{A}\SUN{\phi} = \lambda \SUN{\phi} \in \SUN{X}$. Hence $N(\lambda - \SUN{A}) \supseteq N(\lambda - \STAR{A})$ again by \eqref{eq:setting:10}.
\end{proof}

\subsection{The Andronov-Hopf critical normal form}\label{sec:normalforms:hopf}
In this case $\sigma(A)$ contains a simple purely imaginary pair $\lambda_{1,2} = \pm i\omega_0$ with $\omega_0 > 0$ and no other eigenvalues on the imaginary axis. Let $\phi$ and $\SUN{\phi}$ be complex eigenvectors of $A$ and $\STAR{A}$ corresponding to $\lambda_1 = i\omega_0$ and satisfying $\PAIR{\phi}{\SUN{\phi}}=1$. The restriction of (\ref{eq:neuralfield:8}) to the critical center manifold $\LCM$ is smoothly equivalent to the  {\em Poincar\'{e} normal form}
\begin{equation}
  \label{Hopf.nf}
  \dot{z}=i \omega_0 z + g_{21} z|z|^2 + O(|z|^4)
\end{equation}
where $z$ is complex and the critical normal form coefficient $g_{21}$ is unknown. Any point $\xi$ in the \emph{real} two-dimensional center subspace $\XC$ corresponding to $\lambda_{1,2}$ may be uniquely expressed with respect to the set $\Phi=\{\phi,\BAR{\phi}\}$ by means of the smooth complex coordinate mapping 
\begin{displaymath}
  \xi \mapsto (z,\BAR{z}), \quad z := \PAIR{\xi}{\SUN{\phi}}
\end{displaymath}
The homological equation \eqref{eq:bifurcation:41} presently becomes
\begin{displaymath}
  \SUNSTAR{A}j\CMMAPC(z,\BAR{z}) + R(\CMMAPC(z,\BAR{z})) =
  j\left(D_z\CMMAPC(z,\BAR{z})\dot{z} + D_{\BAR{z}}\CMMAPC(z,\BAR{z})\dot{\BAR{z}}\right)
\end{displaymath}
with center manifold expansion
\begin{displaymath}
  \CMMAPC(z,\BAR{z}) = z\phi + \BAR{z}\BAR{\phi} + \sum_{2 \le j+k \le 3}{\frac{1}{j!k!}h_{jk}z^j\BAR{z}^k} + O(|z|^4)
\end{displaymath}
Note that since the image of $\CMMAPC$ lies in the real space $X$, it follows that its coefficients satisfy $h_{kj} = \BAR{h}_{jk}$. The derivates $\dot{z}$ and $\dot{\BAR{z}}$ are given by \eqref{Hopf.nf} and its complex conjugate.
\par
Comparing coefficients of the quadratic terms $z^2$ and $z\BAR{z}$ leads to two non-singular linear equations for $jh_{20}$ and $jh_{11}$ with solutions
\begin{equation}
  \label{eq:Hopf.h}
  \begin{aligned}
    jh_{20} &= -(\SUNSTAR{A})^{-1}B(\phi,\BAR{\phi})\\
    jh_{11} &= (2i\omega_0-\SUNSTAR{A})^{-1}B(\phi,\phi)
  \end{aligned}
\end{equation}
There are two equations corresponding to the cubic terms $z^3$ and $z^2\BAR{z}$, the first of which is non-singular. The second one reads
\begin{equation}\label{eq:bifurcation:51}
  (i\omega_0I - \SUNSTAR{A})jh_{21} = C(\phi,\phi,\BAR{\phi}) + B(\BAR{\phi},h_{20}) + 2B(\phi,h_{11}) - 2g_{21}j\phi
\end{equation}
An application of Lemma \ref{lem:normalforms:1} to \eqref{eq:bifurcation:51} yields
\begin{equation}
  \label{Hopf.c1}
  g_{21}=\frac{1}{2} \langle \SUN{\phi},C(\phi,\phi,\BAR{\phi}) + B(\BAR{\phi},h_{20}) + 2B(\phi,h_{11})\rangle
\end{equation}
with $h_{20}$ and $h_{11}$ implicitly given by \eqref{eq:Hopf.h}. The cubic coefficient $g_{21}$ determines the {\em first Lyapunov coefficient} $l_1$ by the formula 
\begin{displaymath}
  l_1=\frac{1}{\omega_0} \RE{g_{21}}  
\end{displaymath}
It is well known \cite{Kuznetsov2004} that in generic unfoldings of \eqref{Hopf.nf} $l_1 < 0$ implies a supercritical bifurcation of a limit cycle on the corresponding parameter-dependent locally invariant manifold, while $l_1>0$ implies a subcritical bifurcation of a limit cycle there.
\begin{remark}
  \label{rem:normalforms:3}
  Notice that the vector $\SUN{\phi}$ satisfies $\STAR{A}\SUN{\phi} = i\omega_0\SUN{\phi}$ instead of $\STAR{A}\SUN{\phi} = -i\omega_0\SUN{\phi}$ which is used in the finite dimensional case. The reason for this is that the pairing $\PAIR{\cdot}{\cdot}$ between $\SUN{X}$ and $\SUNSTAR{X}$ is complex-linear in \emph{both} arguments. Also, observe that the values of the multilinear form in \eqref{Hopf.c1} and \eqref{eq:Hopf.h} are elements of the dual space $\SUNSTAR{X}$ of $\SUN{X}$, i.e. they are (bounded) linear functionals. This has been taken into account in the numerical computations of \S \ref{sec:numerics}. A similar remark is valid for the expressions in \S \ref{sec:normalforms:double-hopf}. \hfill \QEDEX
\end{remark}

\subsection{The double Hopf critical normal form}\label{sec:normalforms:double-hopf}
In this case $\sigma(A)$ contains two simple purely imaginary pairs 
\begin{displaymath}
\lambda_{1,4} = \pm i\omega_1,~~\lambda_{2,3} = \pm i\omega_2  
\end{displaymath}
with $\omega_{1,2} > 0$, and no other eigenvalues on the imaginary axis. Let $\phi_{1,2}$ and $\SUN{\phi_{1,2}}$ be eigenvectors of $A$ and $\STAR{A}$,
\begin{displaymath}
  A\phi_1 = i\omega_1\phi_1, \quad A\phi_2 = i\omega_2\phi_2, \quad \STAR{A}\SUN{\phi_1} = i\omega_1\SUN{\phi_1}, \quad \STAR{A}\SUN{\phi_2} = i\omega_2\SUN{\phi_2}
\end{displaymath}
As in the finite-dimensional case, it is always possible to scale these vectors such that the `bi-orthogonality' relation
\begin{displaymath}
  \PAIR{\phi_j}{\SUN{\phi_i}} = \delta_{ij} \qquad (1 \le i,j \le 2)
\end{displaymath}
is satisfied. In addition, we assume the non-resonance conditions
\begin{equation}\label{eq:bifurcation:60}
  k\omega_1 \neq l\omega_2 \quad \mbox{for all } k,l \in \NN \mbox{ with } k + l \le 5
\end{equation}
Then the restriction of (\ref{eq:neuralfield:8}) to the critical center manifold $\LCM$ is smoothly equivalent to the  {\em Poincar\'{e} normal form}
\begin{equation}\label{NF.HH}
  \left\{
    \begin{aligned}
      \dot{z}_1 &= i\omega_1z_1 + g_{2100}z_1|z_1|^2 + g_{1011}z_1|z_2|^2 + g_{3200}z_1|z_1|^4 + g_{2111}z_1|z_1|^2|z_2|^2\\
      &\hphantom{{}=} + g_{1022}z_1|z_2|^4 + O(\|(z_1,\BAR{z_1},z_2,\BAR{z_2})\|^6)\\
      \dot{z}_2 &= i\omega_2z_2 + g_{1110}z_2|z_1|^2 + g_{0021}z_2|z_2|^2 + g_{2210}z_2|z_1|^4 + g_{1121}z_2|z_1|^2|z_2|^2\\
      &\hphantom{{}=} + g_{0032}z_2|z_2|^4 + O(\|(z_1,\BAR{z_1},z_2,\BAR{z_2})\|^6)
    \end{aligned}
  \right.
\end{equation}
where the constants $g_{jklm}$ are all complex \cite[Ch. VIII]{Kuznetsov2004}. Define
\begin{displaymath}
  \begin{bmatrix}
    p_{11}& p_{12}\\
    p_{21}& p_{22}
  \end{bmatrix}
  = 
  \begin{bmatrix}
    g_{2100}& g_{1011}\\
    g_{1110}& g_{0021}
  \end{bmatrix}
\end{displaymath}
and assume that 
\begin{displaymath}
  \RE{p_{11}}\RE{p_{12}}\RE{p_{21}}\RE{p_{22}} \neq 0
\end{displaymath}
As in shown in \cite[Ch. VIII]{Kuznetsov2004} the restriction of (\ref{NF.HH}) to $\LCM$  is locally \emph{smoothly orbitally equivalent} to 
\begin{equation}
  \label{eq:bifurcation:58}
  \left\{
    \begin{aligned}
      \dot{z}_1 &= i\omega_1z_1 + p_{11}z_1|z_1|^2 + p_{12}z_1|z_2|^2 + ir_1z_1|z_1|^4 + s_1z_1|z_2|^4\\
      &\hphantom{{}=} + O(\|(z_1,\BAR{z}_1,z_2,\BAR{z}_2)\|^6)\\
      \dot{z}_2 &= i\omega_2z_2 + p_{21}z_2|z_1|^2 + p_{22}z_2|z_2|^2 + s_2z_2|z_1|^4 + ir_2z_2|z_2|^4\\
      &\hphantom{{}=} + O(\|(z_1,\BAR{z}_1,z_2,\BAR{z}_2)\|^6)
    \end{aligned}
  \right.
\end{equation}
Here $p_{ij}$ and $s_{i}$ are complex while $r_i$ are real, for $1 \le i,j \le 2$. The real parts of $s_i$ are given by
\begin{displaymath}
  \RE{s_1} = \RE{g_{1022}} + \RE{g_{1011}} \times \left[\frac{\RE{g_{1121}}}{\RE{g_{1110}}} - 2\frac{\RE{g_{0032}}}{\RE{g_{0021}}} - \frac{\RE{g_{3200}}\RE{g_{0021}}}{\RE{g_{2100}}\RE{g_{1110}}}\right]
\end{displaymath}
and
\begin{displaymath}
  \RE{s_2} = \RE{g_{2210}} + \RE{g_{1110}} \times \left[\frac{\RE{g_{2111}}}{\RE{g_{1011}}} - 2\frac{\RE{g_{3200}}}{\RE{g_{2100}}} - \frac{\RE{g_{2100}}\RE{g_{0032}}}{\RE{g_{1011}}\RE{g_{0021}}}\right]  
\end{displaymath}
The real constants $r_i$ are of secondary importance in the bifurcation analysis of a generic two-parameter unfolding of \eqref{eq:bifurcation:58} and so we omit expressions for these. They can be extracted from the proof of \cite[Lemma 8.14]{Kuznetsov2004}.
\par
The double Hopf bifurcation is a complicated bifurcation, both from a computational as well as a conceptual viewpoint. An unfolding of \eqref{eq:bifurcation:58} is best analysed by rewriting it in polar coordinates. The sixth-order terms may not be truncated, since they may affect the qualitative dynamics. Depending on the sign of 
\begin{displaymath}
  \RE{p_{11}}\RE{p_{22}} = \RE{g_{2100}}\RE{g_{0021}} 
\end{displaymath}
this bifurcation exhibits either `simple' or `difficult' dynamics, see \cite[\S 8.6.2]{Kuznetsov2004}. Assuming generic dependence on parameters, one may encounter invariant tori, chaotic dynamics, Neimark-Sacker bifurcations of cycles and Shilnikov homoclinic orbits. Note that, although computations up to and including fifth order are required to determine \emph{all} critical coefficients, computations up to and including third order suffice to distinguish between `simple' and 'difficult' cases.
\par
The critical normal form coefficients may be obtained using a procedure similar to the Hopf case discussed in \S \ref{sec:normalforms:hopf}. We omit the details and only present the results, noting that the center manifold now has the formal expansion
\begin{displaymath}
  \CMMAPC(z_1,\BAR{z}_1,z_2,\BAR{z}_2) = z_1\phi_1 + \BAR{z}_1\BAR{\phi}_1 + z_2\phi_2 + \BAR{z}_2\BAR{\phi}_2 
+ \sum_{ j+k+l+m \geq 2}{\frac{1}{j!k!l!m!}h_{jklm}z_1^j\BAR{z}_1^kz_2^l\BAR{z}_2^m}
\end{displaymath}
At the second order in the corresponding homological equation, we find 
\begin{eqnarray*}
  jh_{1100}&=&-(\SUNSTAR{A})^{-1}B({\phi_{1}},{\overline{\phi}_{1}})\\
  jh_{2000}&=&(2i{\omega _{1}} - \SUNSTAR{A})^{-1}B({\phi_{1}},{\phi_{1}})\\
  jh_{1010}&=&[i(\omega _{1}+\omega _{2})- \SUNSTAR{A}]^{-1}B({\phi_{1}},{\phi_{2}})\\
  jh_{1001}&=&[i({\omega _{1}}-{\omega _{2}})-\SUNSTAR{A}]^{-1}B({\phi_{1}}, \,{\overline{\phi}_{2}})\\
  jh_{0020}&=&(2i{\omega _{2}}-\SUNSTAR{A})^{-1}B({\phi_{2}},{\phi_{2}})\\
  jh_{0011}&=&-(\SUNSTAR{A})^{-1}B({\phi_{2}},{\overline{\phi}_{2}})
\end{eqnarray*}
All operators in the right-hand side of the above equations are invertible due to the assumptions \eqref{eq:bifurcation:60} on the critical eigenvalues.
\par
Further, one obtains the following equations for $h_{jklm}$ with $j+k+l+m=3$:
\begin{eqnarray*}
  jh_{3000}&=&(3i\omega_1 - \SUNSTAR{A})^{-1}[C(\phi_1,\phi_1,\phi_1)+3B(h_{2000},\phi_1)]\\
  jh_{2010}&=&[i(2\omega_1+\omega_2) -\SUNSTAR{A}]^{-1} [C(\phi_1,\phi_1,\phi_2) + B(h_{2000},\phi_{2}) + 2B(h_{1010},\phi_1)]\\
  jh_{2001}&=&[i(2\omega_1-\omega_2)-\SUNSTAR{A}]^{-1} [C(\phi_1,\phi_1,\overline{\phi}_2) + B(h_{2000},\overline{\phi}_2) + 2B(h_{1001},\phi_1)]\\
  jh_{1020}&=&[i(\omega_1+2\omega_2)-\SUNSTAR{A}]^{-1} [C(\phi_1,\phi_2,\phi_2) + B(h_{0020},\phi_1) + 2B(h_{1010},\phi_2)]\\
  jh_{1002}&=&[i(\omega_1-2\omega_2)-\SUNSTAR{A}]^{-1} [C(\phi_1,\overline{\phi}_2,\overline{\phi}_2) + B(\overline{h}_{0020},\phi_1) + 2B(h_{1001},\overline{\phi}_2)]\label{E1002}\\
  jh_{0030}&=&(3i\omega_2 -\SUNSTAR{A})^{-1}[C(\phi_2,\phi_2,\phi_2) + 3B(h_{0020},\phi_2)]
\end{eqnarray*}
The cubic coefficients in the normal form (\ref{NF.HH}) come from the Fredholm solvability conditions and are given by
\begin{eqnarray*}
  g_{2100}&=&\frac{1}{2}\langle \SUN{\phi_1},C(\phi_1,\phi_1,\overline{\phi}_1) + B(h_{2000},\overline{\phi}_1) + 2B(h_{1100},\phi_1)\rangle\\  
  g_{1011}&=&\langle \SUN{\phi_1}, C(\phi_1,\phi_2,\overline{\phi}_2) + B(h_{1010},\overline{\phi}_2) + B(h_{1001},\phi_2) + B(h_{0011},\phi_1)\rangle\\   
  g_{1110}&=&\langle \SUN{\phi_2},C(\phi_1,\overline{\phi}_1,\phi_2) + B(h_{1100},\phi_2) + B(h_{1010},\overline{\phi}_1) + B(\overline{h}_{1001},\phi_1) \rangle\\ 
  g_{0021}&=&\frac{1}{2}\langle \SUN{\phi_2}, C(\phi_2,\phi_2,\overline{\phi}_2) + B(h_{0020},\overline{\phi}_2) + 2B(h_{0011},\phi_2) \rangle
\end{eqnarray*}
Similarly, one can compute all remaining coefficients in (\ref{NF.HH}) by proceeding to orders four and five. The resulting (lengthy) formulas are omitted. For the finite-dimensional case these can be found in \cite{Kuznetsov1999b}.

\subsection{Evaluation of normal form coefficients}\label{sec:normalforms:calc}
The computability of the normal form coefficients derived in the previous subsections depends on the possibility to evaluate the dual pairing $\PAIR{\SUN{\phi}}{\SUNSTAR{\phi}}$, where $\SUN{\phi} \in \SUN{X}$ is some eigenvector of $\STAR{A}$ corresponding to a simple eigenvalue $\lambda \in \sigma(A)$ and $\SUNSTAR{\phi} \in \SUNSTAR{X}$. Moreover, the coefficients $h_{\nu}$, with $\nu$ a certain multi-index, can only be computed once a representation for the resolvent $R(\lambda,\SUNSTAR{A})$ is known, where $\lambda \in \rho(A)$. At first sight this seems to be a difficult task, since $\SUN{X} = \STAR{Y} \times \LP{1}([0,h];\STAR{Y})$ and hence
\begin{displaymath}
  \SUNSTAR{X} = \STARSTAR{Y} \times \STAR{[\LP{1}([0,h];\STAR{Y})]}  
\end{displaymath}
see \S \ref{sec:setting:sunstar}, and, as remarked there, $\STAR{[\LP{1}([0,h];\STAR{Y})]} \neq \LP{\infty}([-h,0];\STARSTAR{Y})$. Moreover, a representation of the second dual space $\STARSTAR{Y}$ is generally unknown, e.g. when $Y = \CONT(\BOmega)$ as for \eqref{eq:neuralfield:10}. 
\begin{remark}
  \label{rem:normalforms:1}
  In \S \ref{sec:numerics} it will turn out that the second derivative $B$ in \eqref{eq:normalforms:3} vanishes due to a symmetry in \eqref{eq:neuralfield:10} for the particular modelling functions chosen. In the present subsection we deliberately do not exploit this information in order to illustrate a general principle. \hfill \QEDEX
\end{remark}
In this subsection we offer a way around these complications that works for equations of the type \eqref{eq:neuralfield:8}. We first deal with the problem of determining $R(\lambda,\SUNSTAR{A})$. From Lemma \ref{lem:setting:5} in \S \ref{sec:setting:sunstar} it follows that the second and third derivatives defined in \eqref{eq:normalforms:3} and \eqref{eq:normalforms:4}, as well as all derivatives of higher order, map into the closed subspace $Y \times \{0\}$ of $\SUNSTAR{X}$. By inspection of the expressions for the coefficients $h_{\nu}$ in \S\S \ref{sec:normalforms:hopf} and \ref{sec:normalforms:double-hopf} one sees that it is sufficient to obtain a representation of the action of $R(\lambda,\SUNSTAR{A})$ on this space.
\begin{lemma}
  \label{lem:normalforms:2}
  Suppose that $\lambda \in \rho(A)$. For each $y \in Y$ the function $\psi = \EPS_{\lambda} \otimes \Delta(\lambda)^{-1}y$ is the unique solution in $\CONTD{1}([-h,0];Y)$ of the system
  \begin{equation}
    \label{eq:normalforms:2}
    \left\{
      \begin{aligned}
        \lambda \psi(0) - DF(0)\psi &= y\\
        \lambda \psi - \psi'&= 0
      \end{aligned}
    \right.
  \end{equation}
  Moreover, $\SUNSTAR{\psi} = j\psi$ is the unique solution in $D(\SUNSTAR{A})$ of $(\lambda - \SUNSTAR{A})\SUNSTAR{\psi} = (y,0)$.
\end{lemma}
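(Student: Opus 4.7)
The proof will be a direct verification that splits naturally into the two parts of the statement, with the algebraic identity coming from Proposition~\ref{prop:spectrum:1} and the sun-star bookkeeping coming from Lemma~\ref{lem:setting:4}.

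For the first assertion I would set $q \DEF \Delta(\lambda)^{-1}y$ (which exists and is unique because $\lambda \in \rho(A)$ together with Proposition~\ref{prop:spectrum:1} imply $0 \in \rho(\Delta(\lambda))$) and define $\psi \DEF \EPS_{\lambda} \otimes q$, so that $\psi(\theta) = e^{\lambda \theta} q$ lies in $\CONTD{1}([-h,0];Y)$. Differentiating gives $\psi' = \lambda \psi$, which is the second equation of \eqref{eq:normalforms:2}. For the first equation, I unpack $DF(0)\phi = -\alpha \phi(0) + DG(0)\phi$ and use the definition of $L_{\lambda}$ in \eqref{eq:spectrum:8} to compute $DF(0)\psi = -\alpha q + L_{\lambda} q$; then
\begin{equation*}
  \lambda \psi(0) - DF(0)\psi = (\lambda + \alpha - L_{\lambda}) q = \Delta(\lambda) q = y,
\end{equation*}
as desired. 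Uniqueness in $\CONTD{1}([-h,0];Y)$ is immediate: the second equation forces $\psi(\theta) = e^{\lambda \theta} \psi(0)$ by an elementary ODE argument, after which the first equation reads $\Delta(\lambda)\psi(0) = y$, and $\psi(0)$ is then uniquely pinned down.

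For the second assertion I would apply Lemma~\ref{lem:setting:4} to conclude that $j\psi \in D(\SUNSTAR{A})$ with
\begin{equation*}
  \SUNSTAR{A} j\psi = (0, \psi') + (DF(0)\psi, 0) = (DF(0)\psi, \lambda \psi)
\end{equation*}
in the identification of $Y \times \LP{\infty}([-h,0];Y)$ as a subspace of $\SUNSTAR{X}$. Under this same identification $j\psi$ corresponds to $(\psi(0), \psi)$, since one verifies from \eqref{eq:setting:18} and the representation $\SUN{X} = \STAR{Y} \times \LP{1}([0,h];\STAR{Y})$ that the pairing of $j\psi$ with any $\SUN{\phi} = (\STAR{y}, g) \in \SUN{X}$ agrees with the pairing associated with $(\psi(0), \psi)$. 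Subtracting then yields
\begin{equation*}
  (\lambda - \SUNSTAR{A}) j\psi = (\lambda \psi(0) - DF(0)\psi,\; \lambda \psi - \psi') = (y, 0),
\end{equation*}
using the two equations of \eqref{eq:normalforms:2}. Uniqueness in $D(\SUNSTAR{A})$ follows from $\lambda \in \rho(A) = \rho(\SUNSTAR{A})$, cf.\ Remark~\ref{rem:normalforms:2}, so that $\lambda - \SUNSTAR{A}$ is a bijection from $D(\SUNSTAR{A})$ onto $\SUNSTAR{X}$.

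The main place where one must be careful is the identification of $j\psi$ with $(\psi(0), \psi)$ in the $Y \times \LP{\infty}$ component of $\SUNSTAR{X}$; once this is set up, the verification is essentially algebraic. No genuinely hard step is expected, since the heavy lifting has already been done in Proposition~\ref{prop:spectrum:1} and Lemma~\ref{lem:setting:4}.
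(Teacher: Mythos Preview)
Your proposal is correct and follows essentially the same route as the paper: verify the two equations of \eqref{eq:normalforms:2} directly from $\Delta(\lambda)q = y$ and $\psi' = \lambda\psi$, then invoke Lemma~\ref{lem:setting:4} together with the identification $j\psi \leftrightarrow (\psi(0),\psi)$ to obtain $(\lambda - \SUNSTAR{A})j\psi = (y,0)$, and finish uniqueness via $\rho(A) = \rho(\SUNSTAR{A})$. The only cosmetic difference is that the paper deduces uniqueness of $\psi$ in $\CONTD{1}$ \emph{a posteriori} from uniqueness of $\SUNSTAR{\psi}$, whereas you argue it directly from the ODE; both are fine.
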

\begin{proof}
  We return to the setting of Proposition \ref{prop:spectrum:1} in \S \ref{sec:spectrum:generalities} with $L = DG(0)$. Since $\lambda \in \rho(A)$ it follows that $\Delta(\lambda)^{-1}$ exists. We start by showing that $\psi = \EPS_{\lambda} \otimes \Delta(\lambda)^{-1}y$ solves \eqref{eq:normalforms:2}. Explicitly,
  \begin{displaymath}
    \psi(\theta) = e^{\lambda\theta} \Delta(\lambda)^{-1}y \qquad \forall\,\theta \in [-h,0]
  \end{displaymath}
  so clearly $\psi \in \CONTD{1}([-h,0];Y)$ and $\psi$ satisfies the second equation in \eqref{eq:normalforms:2}. From \eqref{eq:setting:6} we recall that $DF(0)\psi = -\alpha\psi(0) + DG(0)\psi$. Therefore,
  \begin{align*}
    \lambda\psi(0) - DF(0)\psi &= (\lambda + \alpha)\psi(0) - DG(0)\psi\\
    &= (\lambda + \alpha)\Delta(\lambda)^{-1}y - DG(0)(\EPS_{\lambda} \otimes \Delta(\lambda)^{-1}y)\\
    &= (\lambda + \alpha)\Delta(\lambda)^{-1}y - L_{\lambda}(\Delta(\lambda)^{-1}y)\\
    &= \Delta(\lambda)\Delta(\lambda)^{-1}y = y
  \end{align*}
  Lemma \ref{lem:setting:4} in \S \ref{sec:setting:sunstar} implies that $j\psi \in D(\SUNSTAR{A})$, where $j$ is the embedding defined in \eqref{eq:setting:18}, and
  \begin{displaymath}
    (\lambda - \SUNSTAR{A})j\psi = \lambda
    \left[
      \begin{array}{l}
        \psi(0)\\
        \psi
      \end{array}
    \right] - 
    \begin{bmatrix}
      DF(0)\psi\\
      \psi'
    \end{bmatrix} = (y,0)
  \end{displaymath}
  But $\sigma(\SUNSTAR{A}) = \sigma(A)$ so $\SUNSTAR{\psi} = j\psi$ is the \emph{unique} solution of $(\lambda - \SUNSTAR{A})\SUNSTAR{\psi} = (y,0)$. Consequently, $\psi$ itself is the unique solution in $\CONTD{1}([-h,0];Y)$ of \eqref{eq:normalforms:2}.
\end{proof}
The above lemma takes care of one of the two problems sketched above. Now suppose that $\lambda \in \sigma(A) \setminus \{-\alpha\}$ is a simple eigenvalue with eigenvector $\phi \in D(A)$. (Note that $\lambda$ is isolated in $\sigma(A)$ by Corollary \ref{cor:spectrum:3} in \S \ref{sec:spectrum:generalities}.) Let $\SUN{\phi} \in D(\STAR{A})$ be a corresponding eigenvector of $\STAR{A}$. Without loss of generality we may assume that $\PAIR{\phi}{\SUN{\phi}} = 1$. Let $\SUN{P}$ and $\SUNSTAR{P}$ be the associated spectral projections on $\SUN{X}$ and $\SUNSTAR{X}$, respectively. We set out to evaluate $\PAIR{\SUN{\phi}}{\SUNSTAR{\phi}}$ where $\SUNSTAR{\phi} = (y,0) \in Y \times \{0\} \subseteq \SUNSTAR{X}$ is given, but $\SUN{\phi}$ is unknown. Since the range of $\SUNSTAR{P}$ is spanned by $j\phi$ we have $\SUNSTAR{P}\SUNSTAR{\phi} = \kappa j\phi$ for a certain $\kappa \in \CC$. In fact, by \eqref{eq:setting:18} it follows that
\begin{equation}
  \label{eq:normalforms:6}
  \PAIR{\SUN{\phi}}{\SUNSTAR{\phi}} = \PAIR{\SUN{P}\SUN{\phi}}{\SUNSTAR{\phi}} = \PAIR{\SUN{\phi}}{\SUNSTAR{P}\SUNSTAR{\phi}} = \kappa \PAIR{\SUN{\phi}}{j\phi} = \kappa
\end{equation}
so $\kappa$ is to be determined. This may be done as follows. From the Cauchy integral representation \cite[\S 5.8]{Taylor1958} for $\SUNSTAR{P}_{\lambda}$ we infer that
\begin{equation}
  \label{eq:normalforms:5}
  \SUNSTAR{P}\SUNSTAR{\phi} = \frac{1}{2\pi i}\oint_{\DD C_{\lambda}}{R(z,\SUNSTAR{A})\SUNSTAR{\phi}\,dz} = \kappa j\phi
\end{equation}
where $C_{\lambda}$ is any open disk centered at $\lambda$ such that $\BAR{C}_{\lambda,0} \subseteq \rho(A)$ where $C_{\lambda,0} \DEF C_{\lambda} \setminus \{\lambda\}$ and $\DD C_{\lambda}$ is its boundary. Since $\SUNSTAR{\phi} \in Y \times \{0\}$ the integrand in \eqref{eq:normalforms:5} may be calculated using Lemma \ref{lem:normalforms:2}. Specifically, for $z \in \DD C_{\lambda}$ we have
\begin{displaymath}
  R(z,\SUNSTAR{A})\SUNSTAR{\phi} = j(\EPS_z \otimes \Delta(z)^{-1}y) =
  \left[
    \begin{array}{r}
      \Delta(z)^{-1}y\\
      \EPS_z \otimes \Delta(z)^{-1}y
    \end{array}
  \right]
\end{displaymath}
Since $j\phi = \phi(0)$ we may restrict our attention to the first component to infer that
\begin{equation}
  \label{eq:normalforms:7}
  \frac{1}{2\pi i}\oint_{\DD C_{\lambda}}{\Delta(z)^{-1}y\,dz} = \kappa \phi(0)
\end{equation}
We note that the integral is $Y$-valued and \eqref{eq:normalforms:7} is an identity in $Y$. The integrand may be evaluated using the results of \S \ref{sec:spectrum:resolvent}. Indeed, for each $z \in \DD C_{\lambda}$ it is necessary to solve a system of the type \eqref{eq:spectrum:22}, but with $S_{z}\phi$ replaced by $y$.
\par
For this purpose we may apply Theorem \ref{thm:spectrum:1} as follows. Let us assume that $\lambda$ is a root of the characteristic equation \eqref{CharacteristicEquation} on the imaginary axis, $\lambda \not\in \mathcal{S}$, the roots $\pm \rho_i(\lambda)$ of the polynomial \eqref{ODE01CharPolynom2} are all distinct and \eqref{eq:spectrum:15} holds. Suppose it has also been verified that the matrix $T(\lambda)$ is invertible. By choosing the radius of $C_{\lambda}$ sufficiently small, we guarantee that for every $z \in C_{\lambda}$ it holds that $z \neq -\alpha$, $z \not\in \mathcal{S}$, the roots $\pm \rho_i(z)$ are all distinct, \eqref{eq:spectrum:15} is satisfied (with $z$ instead of $\lambda$) and $T(z)$ is invertible. Furthermore, in this way we may also ensure that $z \in \rho(A)$ for every $z \in C_{\lambda,0}$. In particular, all the maps
\begin{displaymath}
  C_{\lambda} \ni z \mapsto \pm\rho_i(z) \in \CC \qquad (i = 1,\ldots,N)
\end{displaymath}
are analytic. By \eqref{eq:spectrum:18} and \eqref{GammaDefinition01} (with $y$ in place of $h_{z}$) this implies that
\begin{displaymath}
  C_{\lambda} \ni z \mapsto \hat{\Gamma}_z(x) \in \CC^{2N} 
\end{displaymath}
is analytic for all $x \in \BOmega$. Hence by \eqref{eq:spectrum:23} (with $y$ instead of $h_{z}$) we have, for every $x \in \BOmega$,
\begin{align}
	\oint_{\DD C_{\lambda}}{[\Delta(z)^{-1}y](x)\,dz} &= \oint_{\DD C_{\lambda}}{\frac{y(x)}{z+\alpha}+\sum_{i=1}^{N}{\bigl[\gamma_{i,z}(x)e^{\rho_i(z)x} + \gamma_{-i,z}(x)e^{-\rho_i(z) x}\bigr]} \,dz}\nonumber\\
	&= \sum_{i=1}^{N}{\left[ e^{\rho_i(\lambda)x}\oint_{\DD C_{\lambda}}{[\Gamma_{0,z}]_i \,dz} + e^{\rho_{-i}(\lambda)x} \oint_{\DD C_\lambda}{[\Gamma_{0,z}]_{-i} \,dz}\right]}\label{eq:normalforms:8}
\end{align} 
where $\Gamma_{0,z}$ is as in \eqref{Gamma0Definition}. Since $\Gamma_{0,z}$ involves $S(z)^{-1}$, the maps
\begin{displaymath}
  C_{\lambda} \ni z \mapsto [\Gamma_{0,z}]_{\pm i} \in \CC \qquad (i = 1,\ldots,N)
\end{displaymath}
cannot be expected to be analytic and \eqref{eq:normalforms:8} may not be reduced further. 
\par
In summary, \eqref{eq:normalforms:8} provides a way to evaluate \eqref{eq:normalforms:7} by numerical integration. It suffices to parametrise $\DD C_\lambda$ and apply a quadrature rule to compute the $\CC$-valued contour integrals
\begin{displaymath}
  \oint_{\DD C_{\lambda}}{[\Gamma_{0,z}]_{\pm i} \,dz} \qquad (i = 1,\ldots,N)
\end{displaymath}
which are independent of $x \in \BOmega$. We then verify that $\tfrac{1}{2\pi i}$ times \eqref{eq:normalforms:8} and $\phi(0)$ are indeed proportional to each other as functions of $x \in \BOmega$. The value of $\PAIR{\SUN{\phi}}{\SUNSTAR{\phi}}$ in \eqref{eq:normalforms:6} then equals the corresponding proportionality constant $\kappa$.

%%% Local Variables: 
%%% mode: latex
%%% TeX-master: "neuralfield"
%%% End: 

\section{Numerical calculations}\label{sec:numerics}
In \S \ref{sec:spectrum:chareq} we derived a characteristic equation for problem \eqref{eq:spectrum:4} under the assumption that $J$ is a finite linear combination of exponentials. The main result was formulated in Theorem \ref{thm:CharacteristicEquation}. Subsequently, in \S \ref{sec:spectrum:resolvent} we obtained a closed expression for the associated resolvent operator. In the present section we apply these findings together with the theory from \S \ref{sec:cm} to a concrete example. For reasons that will become apparent later, we assume that the connectivity function has a bi-exponential form,
\begin{equation}
  \label{eq:numerics:1}
  J(x,r) = \hat{c}_1 e^{-\mu_1 |x-r|} + \hat{c}_2 e^{-\mu_2 |x-r|} \qquad \forall\,x,r \in \BOmega
\end{equation}
and we choose the activation function $S$ as in \cite{Faye2010},
\begin{displaymath}
  S(V) = \frac{1}{1 + e^{-r V}} - \frac{1}{2} \qquad \forall\,V \in \RR
\end{displaymath}
Since $S(0)=0$ it follows that \eqref{eq:neuralfield:10} admits the trivial steady state $V \equiv 0$ on which we will focus from now on. Here we have $S'(0) = \tfrac{r}{4}$ and hence $c_i = \frac{r}{4}\hat{c}_i$ for $i=1,2$.
\par
Let us continue by expressing the characteristic equation for this example and discussing a naive approach for finding its roots. Thereafter we compare these results with a more traditional approach which discretises the spatial domain $\BOmega$. Such a discretisation can be studied using techniques and software that are already available. We conclude with a normal form analysis of a Hopf bifurcation and a double Hopf bifurcation to illustrate the potential of the results from \S \ref{sec:cm}.

\subsection{Spectral calculations}\label{sec:numerics:spectrum}
In order to apply Theorem \ref{thm:CharacteristicEquation}, we start by considering the characteristic polynomial $\mathcal{P}$ from \eqref{ODE01CharPolynom2}, which presently takes the form
\small
\begin{align*}
  \mathcal{P}(\rho) = \frac{e^{\lambda\tau_0}(\lambda+\alpha)}{2} (\rho^2 &-(\lambda+\mu_1)^2)(\rho^2-(\lambda+\mu_2)^2)\\
  &+ c_1(\lambda+\mu_1)(\rho^2-(\lambda+\mu_2)^2) + c_2(\lambda+\mu_2)(\rho^2-(\lambda+\mu_1)^2)
\end{align*}
\normalsize
This is a second order polynomial in $\rho^2$. We apply a Newton algorithm to the mapping $\lambda \mapsto \DET{S(\lambda)}$ to find the solutions of the characteristic equation \eqref{CharacteristicEquation}. At each root $\hat{\lambda}$ we need to verify that $\hat{\lambda} \not\in \mathcal{S}$, the numbers $\pm \rho_{1,2}(\hat{\lambda})$ are all distinct and \eqref{eq:spectrum:15} is satisfied. (Note that both of these are \emph{open} conditions.) Passing this test we may conclude that $\hat{\lambda}$ is indeed an eigenvalue.

\subsection{Discretisation}
\subsubsection*{Derivation \cite{Faye2010}}
An approximate solution to the neural field equation can be obtained by discretising \eqref{eq:neuralfield:10}. This reduces the state space from $\CONT([-h,0];Y)$ to $\CONT([-h,0];\RR^{m+1})$ for some $m \in \NN$. Hence the theory of `classical' DDE can be applied to analyse the approximate system. 
\par
We heuristically derive the discretised system following \cite{Faye2010} but we make a few minor corrections. Consider the original equation \eqref{eq:neuralfield:7}:
\begin{displaymath}
  \frac{\DD V}{\DD t}(t,x) = -\alpha V(t,x) + \sum_{i=1}^{m}{\int_{x_{i-1}}^{x_i}{J(x,r)S(V(t-\tau(x,r),r))\,dr}}
\end{displaymath}
for some partition $-1=x_0 < x_1 < \ldots < x_m=1$. We approximate every single integral with a two-point trapezoid rule evaluated at the end points of the integration interval,
\begin{align*}
  \frac{\partial V}{\partial t} (t,x) \approx -\alpha V(t,x) + \sum_{i=1}^{m}\frac{x_{i}-x_{i-1}}{2}\bigl[&J(x,x_{i-1})S(V(t-\tau(x,x_{i-1}),x_{i-1}))\\
  + &J(x,x_i)S(V(t-\tau(x,x_i),x_i))\bigr]
\end{align*}
By writing $V_j(t) = V(t,x_j)$, we obtain for $j=0,1,\ldots,m$,
\begin{align*}
  \frac{dV_j}{dt}(t) = -\alpha V_j(t) + \sum_{i=1}^{m} \frac{x_{i}-x_{i-1}}{2}\bigl[&J(x_j,x_{i-1})S(V_{i-1}(t-\tau(x_j,x_{i-1})))\\
  + &J(x_j,x_i)S(V_{i}(t-\tau(x_j,x_i)))\bigr]
\end{align*}
As in \eqref{eq:spectrum:19} we take $\tau(x,r) = \tau_0 + |x-r|$. Also, with some abuse of notation we write $J(|x-r|)$ for $J(x,r)$, since the dependence in the right-hand side of \eqref{eq:numerics:1} on $x,r$ is only via $|x - r|$. By restriction to an equidistant mesh of size $\delta = x_{i}-x_{i-1} = \tfrac{2}{m}$, we obtain
\begin{align*}
  \frac{dV_j}{dt}(t) = -\alpha V_j(t) + \frac{2}{m}\sum_{i=1}^{m}\frac{1}{2}\bigl[&J(\delta|i-j-1|)S(V_{i-1}(t-\tau_0-\delta|i-j-1|))\\
  + &J(\delta|i-j|)S(V_{i}(t-\tau_0-\delta|i-j|))\bigr]
\end{align*}
Defining
\begin{displaymath}
  w_i=\begin{cases}
    \frac{1}{2} &\text{if } i\in\{0,m\} \\
    1	&\text{if } i\in\{1,2,\ldots,m-1\}
  \end{cases}
\end{displaymath}
enables us to telescope the summation, arriving at
\begin{equation}\tag{DNF}
  \label{DiscretizedDefinition}
  \frac{dV_j}{dt}(t) = -\alpha V_j(t) + \frac{2}{m}\sum_{i=0}^{m}{w_i J(\delta|i-j|)S(V_{i}(t-\tau_0-\delta|i-j|))}
\end{equation}
for all $j=0,1,\ldots,m$. We refer to \eqref{DiscretizedDefinition} as the \emph{discretisation} of \eqref{eq:neuralfield:10} or \eqref{eq:neuralfield:7}. Note that \eqref{DiscretizedDefinition} indeed is a classical DDE, albeit with many delays, which may be implemented in \verb#MATLAB# to perform forward-time simulations using the {\tt dde23} scheme. In particular, the software package \verb#DDE-BIFTOOL# \cite{Engelborghs2002} allows us to determine the spectrum of the discretised system. At the end of this section we consider two examples in which we use both our analytic results and these numerical tools to study critical points in neural fields.

\subsubsection*{Convergence of discretisation}
In order to `validate' the above discretisation procedure, we generate discretisations with different resolutions and compare their spectra with the spectral values obtained by using the methods from \S \ref{sec:spectrum:chareq}. This is illustrated in Figure \ref{fig:SpectrumConvergence}.
\begin{figure}
  \centering
  \includegraphics[width=0.85\textwidth]{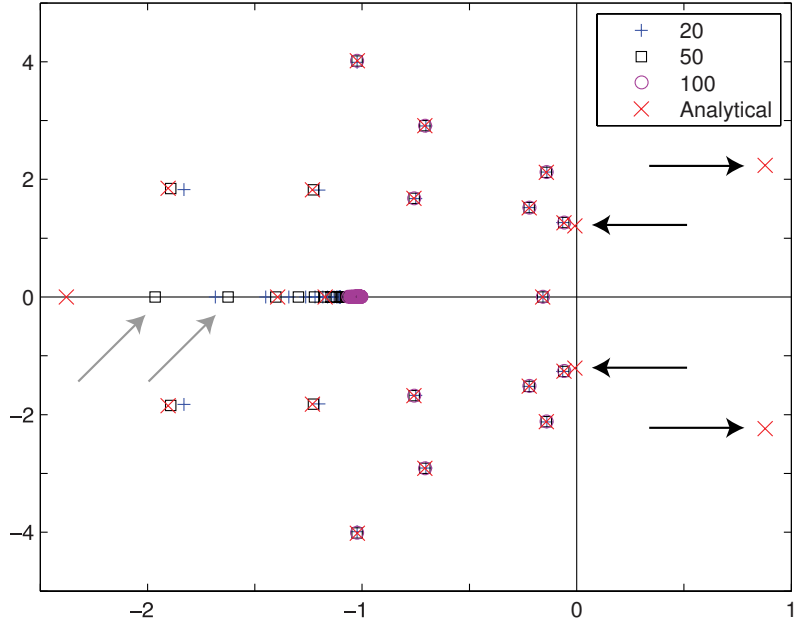}
  \caption{Comparison between spectra of the discretised system for $m=20,50,100$ and roots of the characteristic equation. The four black arrows indicate four roots which are not in the spectrum and the grey arrows point out distinct values which are not found. See text for a more elaborate description of these points. $\alpha = 1, \tau_0=1, c_1 = -5, c_2 = 2, \mu_1 = 2, \mu_2 = 0$.}
  \label{fig:SpectrumConvergence}
\end{figure}
\begin{figure}
  \centering
  \includegraphics[width=0.75\textwidth]{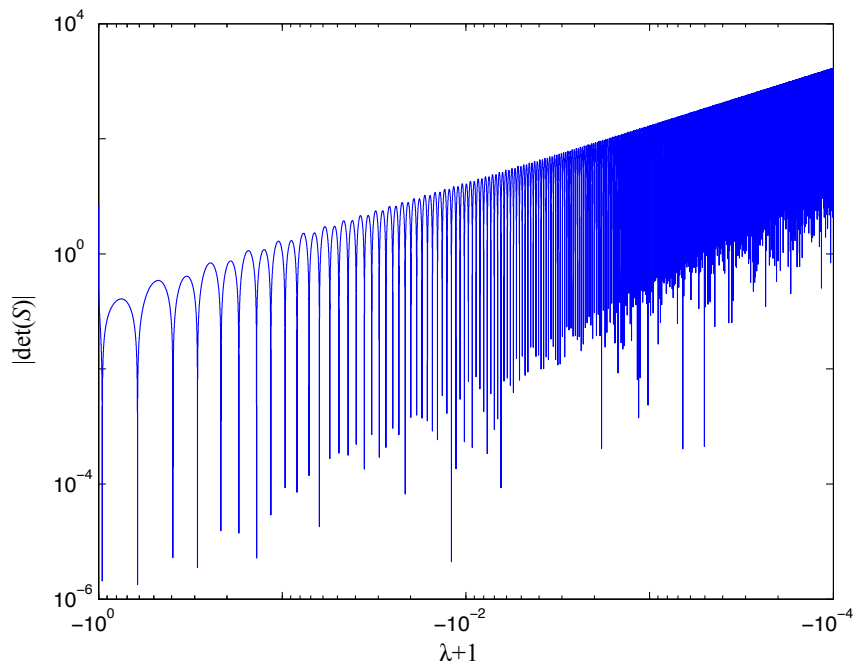}
  \caption{Detail of the accumulation of eigenvalues along the real line near $\lambda=-1$: $|\det(S)|$ is plotted near the accumulation point and downward peaks correspond to roots of the characteristic equation.}
  \label{fig:SpectrumAccumulation}
\end{figure}
The black arrows indicate four roots of the characteristic equation which do not satisfy all conditions stated in Theorem \ref{thm:CharacteristicEquation}: these points do satisfy \eqref{CharacteristicEquation}, but also $\rho_1(\hat{\lambda}) = \rho_2(\hat{\lambda})$. 
Therefore they are to be rejected as eigenvalues. 
Furthermore, the grey arrows indicate eigenvalues which are not found using the algorithm of \S \ref{sec:numerics:spectrum}. 
These eigenvalues all lie in the `accumulation region' near the point $-\alpha$. 
\par
We proceed by studying this accumulation of eigenvalues more closely. For $\lambda\uparrow-1$ along the real axis the absolute value of $\DET{S(\lambda)}$ is plotted in Figure \ref{fig:SpectrumAccumulation} on log-log scale. Every downward peak corresponds to a root of the characteristic equation. For $\lambda$ close to $-\alpha$ the numerical accuracy drops causing the peaks to be less pronounced. This shows that, while these are not located by the root finder algorithm, the characteristic equation does have accumulating roots near the essential spectrum $\{-\alpha\}$ as is suggested by the spectrum of the discretisation, cf. Figure \ref{fig:SpectrumConvergence}.
However, due to both the high frequency oscillations and numerical errors, the Newton algorithm is unable to locate these roots.
\par
Finally we observe that spectra corresponding to finer meshes converge to the analytic spectrum. However, it appears that for increasing resolutions \verb#DDE-BIFTOOL# focuses on roots near $-\alpha$ instead of eigenvalues located further away. This is clearly seen when $m=100$, in which case no spectral values $\lambda$ are found with $\RE{\lambda} > -1.2$. For that reason we have chosen $m=50$ in the following examples.
\subsection{Hopf bifurcation}\label{sec:numerics:hopf}
Rhythms and oscillations are important features of nervous tissue that could be studied with neural field models. For that reason Hopf bifurcations play a key role in the analysis of neural field equations. In this subsection we study a concrete example of a Hopf bifurcation, both analytically and numerically. We also compare the results of both methods.

Initially we focus on a connectivity of the `inverted wizard hat'-type. Similarly as in \cite{Faye2010} we consider the steepness parameter $r$ of the activation function as bifurcation parameter.

\subsubsection*{Spectrum}
The characteristic equation \eqref{CharacteristicEquation} is used to determine the point spectrum for a range of parameters. For the values shown in Table \ref{tbl:H_Param} there exists a purely imaginary pair of simple eigenvalues. The corresponding spectrum is displayed in Figure \ref{fig:H_Spectrum}.
\def\arraystretch{1.5}
\begin{table}
  \centering
  \caption{Parameters corresponding to Hopf bifurcation}
  \label{tbl:H_Param}
  \begin{tabular}{l|ccccccc}
    parameter & $\alpha$ & $\tau_0$ & $r$ & $\hat{c}_1$ & $\hat{c}_2$ & $\mu_1$ & $\mu_2$\\
    \hline
    value     & $1.0$    & $1.0$  & $4.220214885988226$ & $3.0$ & $-5.5$ & $0.5$ & $1.0$ 
  \end{tabular}
\end{table}
\def\arraystretch{1.0}
\begin{figure}
  \centering
  \includegraphics[width=0.70\textwidth]{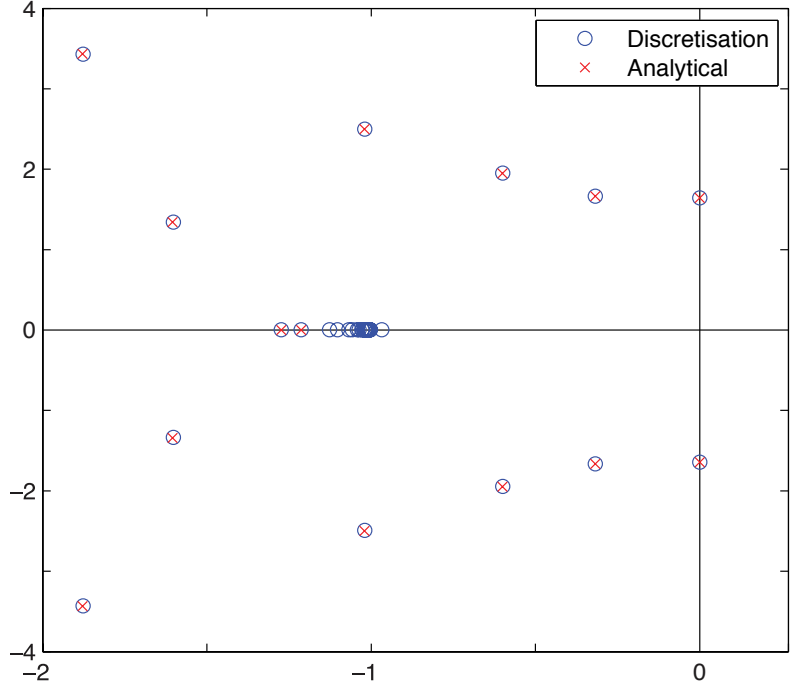}
  \caption{Spectrum at a Hopf bifurcation. Comparison between analytic approach and discretised system ($m=50$).}
  \label{fig:H_Spectrum}
\end{figure}
The figure also shows the spectrum as calculated by \verb#DDE-BIFTOOL# for a discretisation of $m=50$ intervals. From the graph it is obvious that the solution algorithm of \S \ref{sec:numerics:spectrum} is unable to locate eigenvalues near the accumulation point $-\alpha=-1$. We discuss this phenomenon below. Apart from that, the numerical approximation seems to be very close to the analytic solution. Only in the far left half-plane an error can be observed. From a dynamical point of view such an error is of course rather innocuous.

\subsubsection*{First Lyapunov coefficient}
In order to determine analytically the type of Hopf bifurcation (i.e. sub- or supercritical), the first Lyapunov coefficient has to be determined. Before the result of \S \ref{sec:normalforms:hopf} can be applied, the eigenfunction corresponding to the eigenvalues at criticality has to be determined. Application of Theorem \ref{thm:CharacteristicEquation} yields
\begin{displaymath}
  \phi(t,x) = e^{\lambda t} \bigl[\gamma_1(e^{\rho_1 x}+e^{-\rho_1 x}) + \gamma_2(e^{\rho_2 x}+e^{-\rho_2 x}) \bigr] \qquad \forall\,t \in [-h,0],\,\forall\,x \in \BOmega
\end{displaymath}
with
\begin{align*}
  \rho_1 &= 0.321607348361597 - 0.880461478656249i\\
  \rho_2 &= 0.110838003673357 - 2.312123026384049i\\
  \gamma_1 &=-0.191821747840362 - 0.172140605861736i\\
  \gamma_2 &=-0.080160108888561
\end{align*}
corresponding to $\lambda=1.644003102046893i$. (Note that in the present example with $\tau_0$ as in Table \ref{tbl:HH_Param} and $\BOmega = [-1,1]$ the delay interval equals $[-h,0] = [-3,0]$.) Since the activation function $S$ is odd, its second derivative vanishes and the critical normal form coefficient $g_{21}$ in \eqref{Hopf.c1} significantly simplifies to
\begin{displaymath}
  g_{21}=\frac{1}{2} \PAIR{\SUN{\phi}}{D^3R(0)(\phi,\phi,\BAR{\phi})}
\end{displaymath}
The pairing is expressed as a contour integral around $\lambda$ which we evaluate numerically, see \S \ref{sec:normalforms:calc}. We find $g_{21} \approx -0.326+0.0389i$ and hence the first Lyapunov coefficient is $l_1 \approx -0.198$. The negative sign of $l_1$ indicates a \emph{supercritical} Hopf bifurcation. Therefore stable periodic solutions are expected to emerge from the bifurcating steady state.

\subsubsection*{Simulations}
We choose $r=6$ which is beyond the critical parameter value of Table \ref{tbl:H_Param}. For the initial condition $V(t,x)=\EPS=0.01$ for $t\in[-h,0]$ with $h=3$ and $x\in[-1,1]$ the simulation result is shown in Figure \ref{fig:H_Simulations}. After a transient time, the system approaches its stable periodic attractor. The convergence to stable periodic motion is consistent with the sign of the first Lyapunov coefficient. Furthermore, both the shape and period of this attractor match with the eigenfunction and eigenvalue respectively.
\begin{figure}
  \centering
  \includegraphics[width=\textwidth]{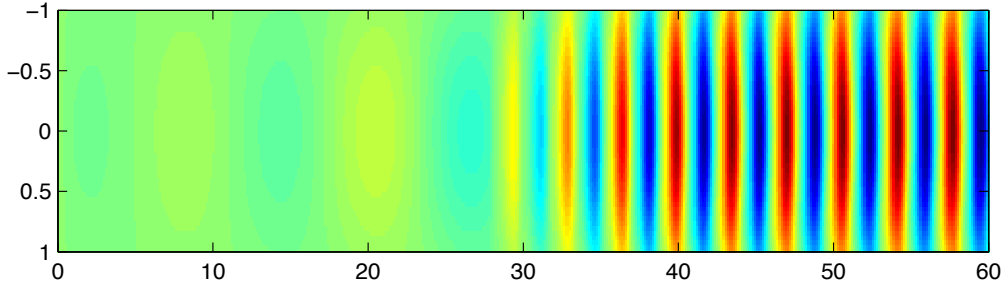}
  \caption{Forward time simulation of discretised system ($m=50$) for $r=6$ beyond a Hopf bifurcation. A long transient is observed before the solution approaches the limit cycle.}
  \label{fig:H_Simulations}
\end{figure}

\subsection{Double Hopf bifurcation}\label{sec:numerics:dh}
The spectrum at the Hopf point studied in \S \ref{sec:numerics:hopf} consists mainly of complex pairs of eigenvalues. Therefore it is to be expected that system parameters can be tuned such that a second pair of complex eigenvalues arrives at the imaginary axis, giving rise to a double Hopf bifurcation. In this subsection we show that this is indeed possible and we study this bifurcation both analytically and numerically.

\subsubsection*{Spectrum}
\def\arraystretch{1.5}
\begin{table}
  \centering
  \caption{Parameters corresponding to double Hopf bifurcation.}
  \label{tbl:HH_Param}  
  \begin{tabular}{l|ccccccc}
    parameter & $\alpha$ & $\tau_0$ & $r$ & $\hat{c}_1$ & $\hat{c}_2$ & $\mu_1$ & $\mu_2$\\
    \hline
    value     & $1.0$    & $1.0$  & $4.828749714457348$ & $3.0$ & $-5.5$ & $0.0$ & $0.999592391420082$ 
  \end{tabular}
\end{table}
\def\arraystretch{1.0}
\def\arraystretch{1.5}
\begin{table}
  \centering
  \caption{Values of $\lambda$ and corresponding $\rho$}
  \label{tbl:HH_Lambda}
  \begin{tabular}{c|r@{$-$}l}
    $\lambda$ & \multicolumn{2}{c}{$\rho(\lambda)$} \\
    \hline
    $2.030930500644927i$ & $0.454550410967142$ & $1.057267648955222i$ \\
    & $0.054136932895367$ & $3.495632804443535i$\\
    \hline
    $1.299147304907829i$ & $1.075429529957343$ & $0.717519976488838i$ \\
    & $1.128716151852882$ & $2.306528729845143i$ \\
  \end{tabular}
\end{table}
\def\arraystretch{1.0}
\begin{figure}
  \centering
  \includegraphics[width=0.70\textwidth]{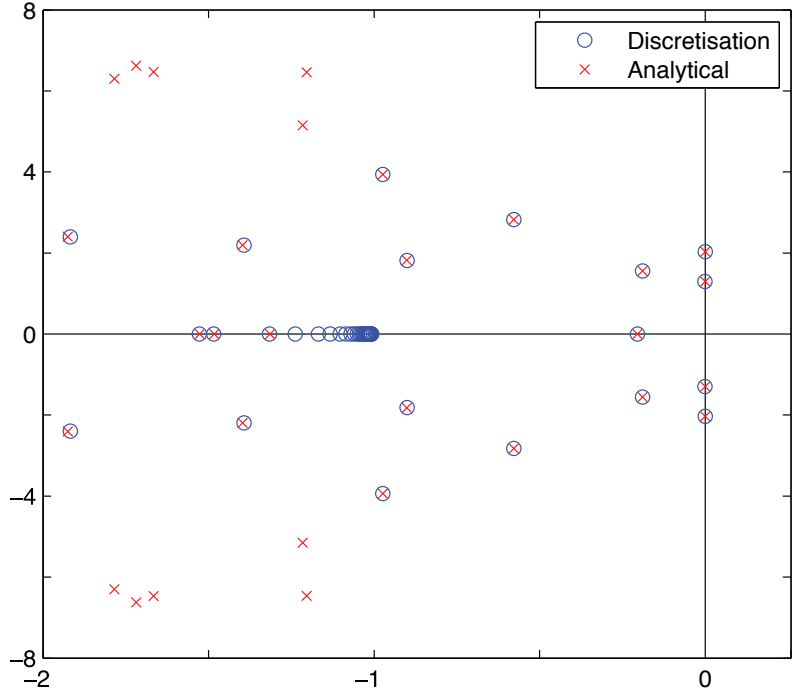}
  \caption{Spectrum at a double Hopf bifurcation. Comparison between analytic approach and discretised system ($m=50$).}
  \label{fig:HH_Spectrum}
\end{figure}
Parameters for which the system has two complex pairs of eigenvalues on the imaginary axis and no eigenvalues in the positive right half-plane are identified, see Table \ref{tbl:HH_Param}. The corresponding spectrum is depicted in Figure \ref{fig:HH_Spectrum} while Table \ref{tbl:HH_Lambda} lists the values of $\lambda$ at this critical point. As with the regular Hopf bifurcation, we observe that the root finding algorithm misses most eigenvalues near the essential spectrum at $-\alpha=-1$.
\par
Next we compute the eigenfunctions corresponding to the critical eigenvalues. For this purpose Theorem \ref{thm:CharacteristicEquation} may be applied using the data in Table \ref{tbl:HH_Lambda}. Modulus and argument of both eigenfunctions are depicted in Figure \ref{fig:HH_EigenFuncs}.
\begin{figure}
  \centering
  \includegraphics[width=0.80\textwidth]{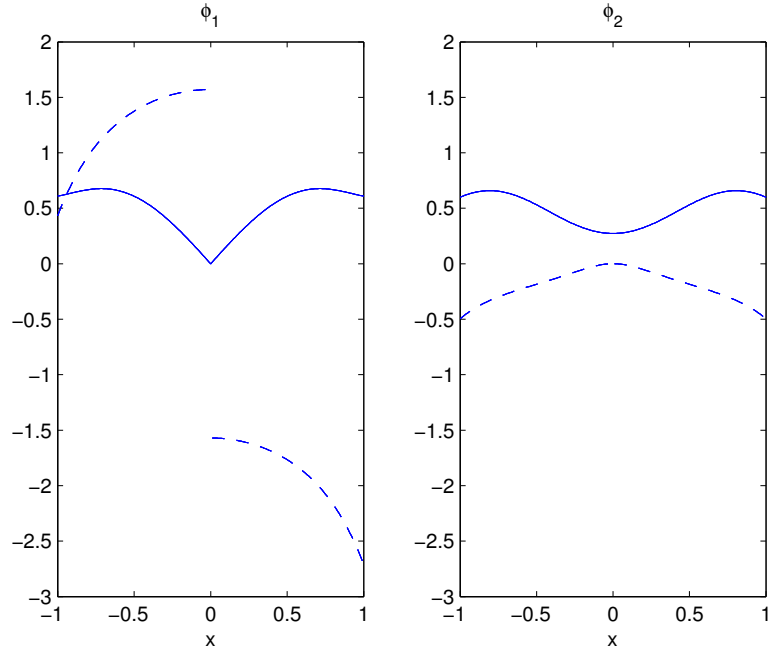}
  \caption{Eigenfunctions corresponding to the critical eigenvalues. $\lambda=i1.299...$ is shown left and $\lambda=i2.030...$ on the right. Solid lines depict the modulus and dashed lines the argument.}
  \label{fig:HH_EigenFuncs}
\end{figure}

\subsubsection*{Normal form coefficients}
With this information available, the normal form coefficients of the double Hopf bifurcation may be evaluated. The coefficients $g_{2100}$, $g_{1011}$, $g_{1110}$, and $g_{0021}$ from \S \ref{sec:normalforms:double-hopf} are found as in \S \ref{sec:numerics:hopf}, which results in the matrix
\begin{displaymath}
  \begin{bmatrix}
    p_{11} & p_{12} \\
    p_{21} & p_{22}
  \end{bmatrix}
  =
  \begin{bmatrix}
    -8.822 & -3.367 \\
    -13.79 & -1.310
  \end{bmatrix}
\end{displaymath}
Since $p_{11}p_{22}>0$, we conclude that this double Hopf bifurcation is of the `simple' type, see \cite[\S 8.6.2]{Kuznetsov2004}. Defining $\theta \DEF \tfrac{p_{12}}{p_{22}} \approx 2.57$ and $\delta \DEF \tfrac{p_{21}}{p_{11}} \approx 1.56$, we find that $\theta\delta>1$ and therefore this `simple' bifurcation has sub-type I \cite{Kuznetsov2004}. The key feature of this sub-type is the presence of a regime in parameter space for which two distinct stable periodic solutions exist.

\subsubsection*{Simulations}
\begin{figure}
  \centering
  \includegraphics[width=0.80\textwidth]{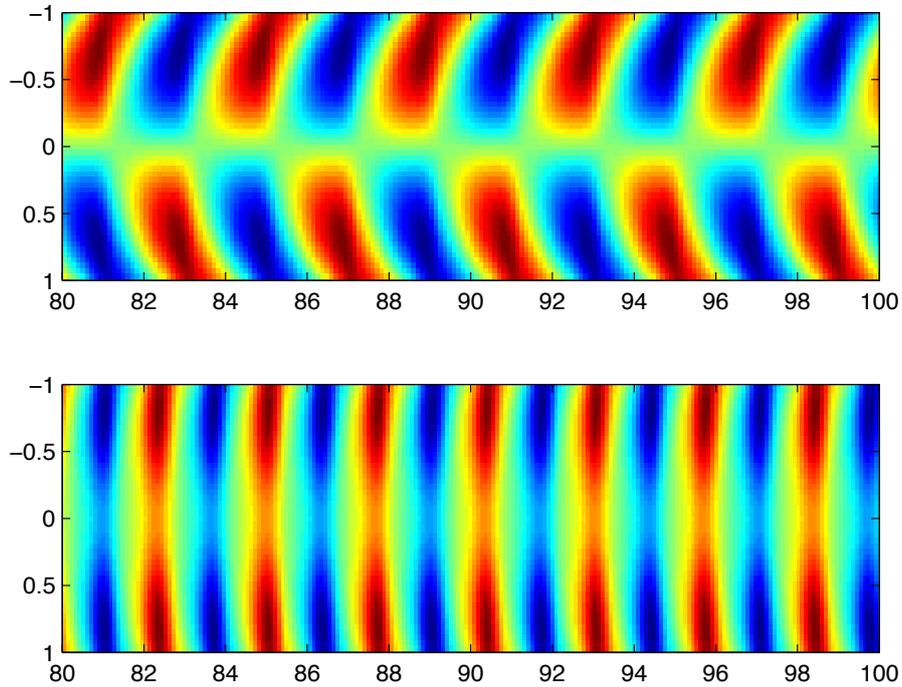}
  \caption{Bi-stability near double Hopf bifurcation: for $r=6$ and $\mu_2=1$ the time evolution is shown for different initial conditions ($m=50$). Top and bottom diagrams correspond to \eqref{HH_SimInits1} and \eqref{HH_SimInits2} respectively.}
  \label{fig:HH_Simulations}
\end{figure}
The parameters are adjusted slightly, such that both pairs of complex eigenvalues have a positive real part. More specifically we choose $r=6$ and $\mu_2=1$ while keeping other parameters as in Table \ref{tbl:HH_Param}. For the following two initial conditions
\begin{subequations} 
  \begin{align}
    V(t,x) &= \EPS x \label{HH_SimInits1} \\
    V(t,x) &= \EPS \label{HH_SimInits2}
  \end{align}
\end{subequations}
with $\EPS=0.01$, the discretised system ($m=50$) is integrated forwards in time, see Figure \ref{fig:HH_Simulations}.
\par
For the chosen parameters the system has two stable periodic attractors and the asymptotic behaviour is determined by their initial conditions. This result is consistent with the predictions of the normal form computation. Furthermore we observe, since the system is close to the double Hopf bifurcation, that both the shape and period of the periodic solutions are fairly well approximated by the critical eigenfunctions (c.f. Figure \ref{fig:HH_EigenFuncs}). Indeed, the moduli of the eigenfunctions correspond to the amplitude of the asymptotic solution. For either of the solutions the extrema are located near the borders of the domain. The antiphasic solutions in the upper panel are indicated by the jump of size $\pi$ in the argument of the first eigenfunction, see Figure \ref{fig:HH_EigenFuncs} on the left.

%%% Local Variables: 
%%% mode: latex
%%% TeX-master: "neuralfield"
%%% End: 

\section{Discussion}\label{sec:neuralfield:discussion}
We have demonstrated that neural field equations with transmission delay fit well into the sun-star framework for delay equations. As a consequence, standard results from dynamical systems theory, such as the principle of linearised (in)stability, center manifold reduction and normal form computation, are readily available. These, in turn, open the possibility for a systematic study of codimension one and two local bifurcations, w.r.t.  parameters in the connectivity and activation functions. This facilitates an understanding of the effect of parameters in terms of biological quantities. 
\par
In \S \ref{sec:numerics} we analysed the dynamics of a one population model with the inverted {\em wizard} hat as connectivity function. The choice of an inverted {\em Mexican} hat is biologically more plausible, as pyramidal cells are surrounded by a cloud of interneurons, while long range connections are by and large excitatory. We have chosen the inverted wizard hat mainly for mathematical convenience. Indeed, in \S \ref{sec:spectrum:chareq} an analytic formula for the location of the eigenvalues was derived. It is well known that the combination of an inverted Mexican hat connectivity with a transmission delay leads to dynamic instabilities \cite{Bressloff1996,Hutt2003}. In \cite{Coombes2005} Turing instabilities were shown to occur for the inverted wizard hat connectivity.
\par
We have seen that the stationary spatially homogeneous state destabilises upon increasing the steepness (gain) of the activation function. This is in line with other findings indicating that the activation function strongly influences dynamical behaviour, see for instance \cite{Ermentrout2010,Coombes2010}. 
\par
It is mathematically challenging to consider neural field equations on unbounded spatial domains, leading to infinite delays, although such is of less importance from a biological viewpoint. Our main goal for the near future is to develop tools for numerical bifurcation analysis for the class of equations studied in this paper. Normal form computation is a first prerequisite for this task. Hence we are on our way.

%%% Local Variables: 
%%% mode: latex
%%% TeX-master: "neuralfield"
%%% End:

\appendix
\section{Proof of Proposition \ref{prp:ODE01CharEq}}\label{app:ProofCharPolynom}
We use the same notation as in Lemma \ref{lmm:ODEEquivalence} and its proof. We recall that the vector $Z = [\zeta_0, \zeta_1,\ldots,\zeta_{N-1},1]$ is chosen such that the vector $\beta = [\beta_0,\beta_1,\ldots,\beta_N]$, whose elements are the coefficients of the characteristic polynomial $\mathcal{P}$, is given by $\beta = M^TZ$. Introducing $r \DEF [1,\rho^2, \rho^4,\ldots,\rho^{2N}]$ we see that
\begin{equation}\label{Pdef}
  \mathcal{P}(\rho) = r^T M^T Z
\end{equation}
First we determine the vector $Z$, thereafter we split $M$, and we conclude the proof by determining how $Z$ acts on each factor in this decomposition.
\par
Although $Z$ can be obtained by applying the inverse of the Vandermonde matrix $W$, we will proceed in a different manner. We start by rewriting \eqref{VandermondeDefinition} as
\begin{equation}
  \label{eq:appendix_charpolynom:1}
  \begin{bmatrix}
    1 & k_1^2 & k_1^4 &\hdots & k_1^{2N-2} & 0\\
    1 & k_2^2 & k_2^4 &\hdots & k_2^{2N-2} & 0\\ 
\vdots& \vdots& \vdots&       & \vdots     & \vdots\\
    1 & k_N^2 & k_N^4 &\hdots & k_N^{2N-2} & 0\\
    0 & 0     & 0     &\hdots & 0          & 1
  \end{bmatrix} 
  \begin{bmatrix} 
    \zeta_0 \\ \zeta_1 \\ \vdots \\ \zeta_{N-1} \\ 1 
  \end{bmatrix} =
  \begin{bmatrix} 
    -k_1^{2N} \\ -k_2^{2N} \\ \vdots \\ -k_N^{2N} \\ 1 
  \end{bmatrix}
\end{equation}
For $m\in\mathbb{N}$ we define $P_m \DEF [p_1,p_2,\ldots,p_m]$ with $p_i \in \{0,1\}$ for $i=1,\ldots,m$. We set $|P_m| = \sum_{i=1}^m p_i$  equal to the number of 1's in $P_m$. Using Gaussian elimination the solution of \eqref{eq:appendix_charpolynom:1} is found to be
\begin{displaymath}
  Z = 
  \begin{bmatrix}
    (-1)^{N-0} \sum_{|P_N|=N-0}{k_1^{2p_1} k_2^{2p_2} \ldots k_N^{2p_N}}\\
    (-1)^{N-1} \sum_{|P_N|=N-1}{k_1^{2p_1} k_2^{2p_2} \ldots k_N^{2p_N}}\\
    \vdots \\
    (-1)^{1} \sum_{|P_N|=1}{k_1^{2p_1} k_2^{2p_2} \ldots k_N^{2p_N}}\\
    1
  \end{bmatrix}
 \end{displaymath}
From the proof of Lemma \ref{lmm:ODEEquivalence} we recall the decomposition 
\begin{equation}
  \label{eq:appendix_charpolynom:2}
  M^T = e^{\lambda\tau_0}(\lambda + \alpha)I + 2\Xi
\end{equation}
where $I$ is the $(N+1) \times (N+1)$ identity matrix and $\Xi$ was defined in the proof of Lemma \ref{lmm:ODEEquivalence}. Expanding the bilinear forms in the matrix $\Xi$ and moving the summation in front of the matrix yields
\begin{equation}
  \label{eq:appendix_charpolynom:3}
   \Xi = \sum_{i=1}^{N} c_i k_i \Xi_i, \qquad \Xi_i \DEF 
   \begin{bmatrix}
    0      & 1 & k_i^2 & \hdots & k_i^{2(N-1)} \\
    0      & 0 & 1 & \hdots & k_i^{2(N-2)} \\
    \vdots & & \ddots & \ddots & \vdots \\
    0      & & & 0 & 1 \\
    0      & \ldots  & \hdots  & \hdots & 0
  \end{bmatrix}
\end{equation}
Now substitute \eqref{eq:appendix_charpolynom:2} with \eqref{eq:appendix_charpolynom:3} into \eqref{Pdef} to obtain
\begin{align}
  \mathcal{P}(\rho) &= r^T \Bigl[e^{\lambda\tau_0}(\lambda+\alpha)\Id + 2\sum_{i=1}^{N}{c_i k_i \Xi_i}\Bigr] Z\nonumber\\
  &= e^{\lambda\tau_0}(\lambda+\alpha) 
  \begin{bmatrix} 
    1 & \rho^2 & \rho^4 & \hdots & \rho^{2N} 
  \end{bmatrix} 
  Z + 2r^T \sum_{i=1}^{N}{c_i k_i \Xi_i} Z\label{eq:appendix_charpolynom:4}
\end{align}
We observe that on the one hand,
\begin{displaymath}
  e^{\lambda\tau_0}(\lambda+\alpha) 
  \begin{bmatrix} 
    1 & \rho^2 & \rho^4 & \hdots & \rho^{2N} 
  \end{bmatrix} 
  Z = e^{\lambda\tau_0}(\lambda+\alpha) \prod_{i=1}^{N} (\rho^2-k_i^2)
\end{displaymath}
while on the other hand,
\begin{align*}
  &r^T \sum_{i=1}^{N}{c_i k_i \Xi_i} Z =\\
  &\sum_{i=1}^{N}{c_i k_i 
  \begin{bmatrix} 
    1 & \rho^2 & \rho^4 & \hdots & \rho^{2N} 
  \end{bmatrix} 
  \begin{bmatrix}
    (-1)^{N-1} \sum_{|P_{N-1}|=N-1}{k_1^{2p_1} k_2^{2p_2} \hdots k_{i-1}^{2p_{i-1}}k_{i+1}^{2p_i}\hdots k_N^{2p_{N-1}}}\\
    (-1)^{N-2} \sum_{|P_{N-1}|=N-2}{k_1^{2p_1} k_2^{2p_2} \hdots k_{i-1}^{2p_{i-1}}k_{i+1}^{2p_i}\hdots k_N^{2p_{N-1}}}\\
    \vdots\\
    (-1)^{1} \sum_{|P_{N-1}|=1}{k_1^{2p_1} k_2^{2p_2} \hdots k_{i-1}^{2p_{i-1}}k_{i+1}^{2p_i}\hdots k_N^{2p_{N-1}}}\\
    1\\
    0
  \end{bmatrix}}
\end{align*}
Hence by \eqref{eq:appendix_charpolynom:4} it follows that 
\begin{displaymath}
  \mathcal{P}(\rho) = e^{\lambda\tau_0}(\lambda+\alpha) \prod_{j=1}^{N} (\rho^2-k_j^2) + 2\sum_{i=1}^{N} c_i k_i \prod_{\substack{j=1 \\ j\neq i}}^{N} (\rho^2-k_j^2)
\end{displaymath}
which is equivalent to \eqref{ODE01CharPolynom2}, in the sense that the two polynomials have identical roots. Hence the proof is complete.

%%% Local Variables: 
%%% mode: latex
%%% TeX-master: "neuralfield"
%%% End: 

\bibliographystyle{amsplain}
\bibliography{neuralfield}

\end{document}